\documentclass[11pt]{amsart}
\usepackage{amsmath, amsthm, mathabx,amscd, amsfonts, amssymb, hyperref, mathrsfs, textcomp, epsfig, a4wide, graphicx, IEEEtrantools, tikz, verbatim, xypic}
\usepackage[mathscr]{euscript}

\usetikzlibrary{shapes,arrows}
\usetikzlibrary{matrix,decorations.pathreplacing,angles,quotes,cd}

\newtheorem{teor}{Theorem}
\newtheorem{thm}{Theorem}[section]
\newtheorem{prop}[thm]{Proposition}
\newtheorem{lemma}[thm]{Lemma}
\newtheorem*{Quest}{Question}
\newtheorem{quest}{Question}
\newtheorem{cor}[thm]{Corollary}
\newcommand{\spin}{\mathfrak{s}}
\theoremstyle{definition}

\theoremstyle{remark}
\newtheorem{remark}{Remark}[section]

     \RequirePackage{rotating}                   % Case (2)
    \def\HSt{%
       \setbox0=\hbox{$\widehat{\mathit{HS}}$}
       \setbox1=\hbox{$\mathit{HS}$}
       \dimen0=1.1\ht0
       \advance\dimen0 by 1.17\ht1
       \smash{\mskip2mu\raise\dimen0\rlap{%
          \begin{turn}{180}
              {$\widehat{\phantom{\mathit{HS}}}$}
           \end{turn}} \mskip-2mu    
                \mathit{HS}
    }{\vphantom{\widehat{\mathit{HS}}}}{}}

     \RequirePackage{rotating}                   % Case (2)
    \def\HMt{%
       \setbox0=\hbox{$\widehat{\mathit{HM}}$}
       \setbox1=\hbox{$\mathit{HM}$}
       \dimen0=1.1\ht0
       \advance\dimen0 by 1.17\ht1
       \smash{\mskip2mu\raise\dimen0\rlap{%
          \begin{turn}{180}
              {$\widehat{\phantom{\mathit{HM}}}$}
           \end{turn}} \mskip-2mu    
                \mathit{HM}
    }{\vphantom{\widehat{\mathit{HM}}}}{}}

\newcommand{\vol}{\mathrm{vol}}
\newcommand{\C}{\mathbb{C}}
\newcommand{\R}{\mathbb{R}}

% \scriptscriptstyle
\begin{document}

\title[Monopoles on hyperbolic three-manifolds]{The Seiberg-Witten equations and the length spectrum of hyperbolic three-manifolds} 

\author{Francesco Lin}
\address{Department of Mathematics, Columbia University} 
\email{flin@math.columbia.edu}

\author{Michael Lipnowski}
\address{Department of Mathematics and Statistics, McGill University} 
\email{michael.lipnowski@mcgill.ca}
\begin{abstract}
We exhibit the first examples of hyperbolic three-manifolds for which the Seiberg-Witten equations do not admit any irreducible solution. Our approach relies on hyperbolic geometry in an essential way; it combines an explicit upper bound for the first eigenvalue on coexact $1$-forms $\lambda_1^*$ on rational homology spheres which admit irreducible solutions together with a version of the Selberg trace formula relating the spectrum of the Laplacian on coexact $1$-forms with the volume and complex length spectrum of a hyperbolic three-manifold. Using these relationships, we also provide precise numerical bounds on $\lambda_1^*$ for several hyperbolic rational homology spheres.
\end{abstract}
\maketitle

In the last three decades, both hyperbolic geometry and Floer homology have played a central role in the study of the geometry and topology of three-dimensional manifolds (see for example \cite{Agol}, \cite{Gabai}, \cite{KM1}, \cite{KM2}, \cite{Tau}). Despite this, and even though both subjects have by now reached their maturity, their mutual interaction (if any) remains extremely mysterious. For example, while the computation of the Floer homology for the Seifert fibered case is very well-understood in explicit, geometric terms \cite{FS}, \cite{MOY}, the Floer homology of hyperbolic manifolds (i.e. admitting a metric with constant sectional curvature $-1$) has eluded similar descriptions. Because Mostow rigidity implies that the geometric invariants of a hyperbolic metric are indeed topological invariants, the following is a very natural yet outstanding problem one encounters.
\begin{Quest}
For a hyperbolic three-manifold $Y$, is there any relationship between the topological invariants arising from the hyperbolic geometry of $Y$ (e.g. the volume, injectivity radius, lengths of geodesics, etc.) and the invariants arising from Floer homology?
\end{Quest}

In the present paper we discuss, for a hyperbolic-three manifold $Y$ with $b_1(Y)=0,$ a relationship between the existence of irreducible solutions to the Seiberg-Witten equations on $Y$ and the hyperbolic geometry of $Y.$ As a testing ground, we explore this relationship for the first $50$ manifolds in the Hodgson-Weeks census, which is a good approximation to the complete list of hyperbolic three-manifolds with volume $<6.5$ and injectivity radius $>0.15$ \cite{HW}. Our main application is the following.
\begin{teor}\label{Thm1}
Let $Y$ be one of the hyperbolic three-manifolds from the Hodgson-Weeks census listed in Table \ref{table1}. Then, for the hyperbolic metric, for any spin$^c$ structure the Seiberg-Witten equations on $Y$ (for sufficiently small perturbations) do not admit irreducible solutions\footnote{This result (and the following Theorem $2$) takes as input the computations of the length spectrum provided by the \texttt{length\_spectrum()} method of SnapPy version $2.6.1$ \cite{CDGW}. These are very accurate (especially for the small manifolds we are dealing with in the paper), but are not yet certified using interval arithmetic in the current version. There is promising work towards this end \cite{Trnkova2017} using the certified hyperbolic structure produced in \cite{HIKMOT2016}.}.
\end{teor}
The only previously known examples of Riemannian rational homology three-spheres with no irreducible solutions were provided by manifolds with positive scalar curvature, and the Hantzsche-Wendt manifold (the only rational homology three-sphere with a flat metric), \cite{KM}. In this sense, the manifolds in Table \ref{table1} are also the first examples of hyperbolic three-manifolds for which the set of solutions to the Seiberg-Witten equations is determined explicitly.

\begin{comment}
\begin{table}[h!]
 \begin{tabular}{|c| c| c|} 
 \hline
 Census label & Volume & Injectivity radius \\ 
 \hline\hline
 0 &  0.94270 \ldots &  0.29230 \ldots \\ 
 \hline
 2 & 1.01494 \ldots & 0.41572 \ldots \\
 \hline
 3 & 1.26371 \ldots & 0.28753 \ldots \\
 \hline
 8 & 1.42361 \ldots  & 0.17618 \ldots \\
 \hline
%  12 &&\\
% \hline
%  13 &&\\
% \hline
14 & 1.58316 \ldots & 0.27889 \ldots \\
 \hline
15 & 1.58316 \ldots &  0.38874 \ldots \\
 \hline
%  16 &&\\
% \hline
%  17 &&\\
% \hline
22 & 1.83193\ldots & 0.26532\ldots\\
 \hline
 25 & 1.83193 \ldots &  0.26531 \ldots \\
 \hline
%  28 &&\\
% \hline
  31 & 1.88541 \ldots & 0.29230 \ldots \\
  \hline
%   32 &&\\
% \hline
%  35 &&\\
% \hline
39 & 1.96273 \ldots & 0.21576 \ldots  \\ 
   \hline
%    40 &&\\
% \hline
44 & 2.02988 \ldots & 0.43127 \ldots \\
    \hline
%     46 &&\\
%     \hline
\end{tabular}

\caption{The hyperbolic manifolds of Theorem \ref{Thm1}.}
\label{table1}
\end{table}
\end{comment}

\begin{table}[h!]
 \begin{tabular}{|c| c| c|} 
 \hline
 Census label & Volume & Injectivity radius \\ 
 \hline\hline
 0 &  0.94270\ldots &  0.29230\ldots \\ 
 \hline
 2 & 1.01494\ldots & 0.41572\ldots \\
 \hline
 3 & 1.26371\ldots & 0.28753\ldots \\
 \hline
 8 & 1.42361\ldots  & 0.17618\ldots \\
 \hline
 12 & 1.54356\ldots & 0.16768\ldots\\
 \hline
  13 & 1.54356\ldots& 0.28903\ldots\\
 \hline
14 & 1.58316\ldots & 0.27889\ldots \\
 \hline
15 & 1.58316\ldots &  0.38874\ldots \\
 \hline
  16 & 1.58864\dots & 0.26727\ldots\\
 \hline
%  17 &&\\
% \hline
22 & 1.83193\ldots & 0.26532\ldots\\
 \hline
 25 & 1.83193\ldots &  0.26531\ldots \\
 \hline
  28 & 1.88541\ldots& 0.29230\ldots\\
 \hline
   29 & 1.88541\ldots & 0.19853\ldots \\
 \hline
   30 & 1.88541\ldots& 0.19853\ldots\\     
 \hline
  31 & 1.88541\ldots & 0.29230\ldots \\
  \hline
  32 & 1.88591\ldots & 0.20593\ldots\\
      \hline
 33 & 1.91084\ldots & 0.22107\ldots\\
 \hline
%  35 &&\\
% \hline
39 & 1.96274\ldots & 0.21576\ldots  \\ 
   \hline
    40 & 1.96274\ldots& 0.28904\ldots\\
 \hline
   42 & 2.02395\ldots& 0.17922\ldots\\
   \hline
44 & 2.02988\ldots & 0.43127\ldots \\
    \hline
   46 & 2.02988\ldots& 0.27177\ldots\\
     \hline
49 &2.02988\ldots& 0.21564\ldots\\
     \hline
\end{tabular}

\caption{The hyperbolic manifolds of Theorem \ref{Thm1}.}
\label{table1}
\end{table}

As a direct consequence of Theorem \ref{Thm1}, the manifolds in Table \ref{table1} are \textit{$L$-spaces}, i.e. their reduced Floer homology groups $\mathit{HM}$ vanishes. This had been previously shown by Dunfield \cite{Dun} in the setting of Heegaard Floer homology; the latter is known to be isomorphic to monopole Floer homology (see \cite{KLT}, \cite{CGH} and subsequent papers). In fact, he has determined exactly which spaces in the Hodgson-Weeks census are $L$-spaces; in this regard, Table \ref{table1} comprises $23$ of the $28$ $L$-spaces with label less than $49$.
\begin{remark}
As a matter of nomenclature, we will refer to rational homology spheres admitting a metric with no irreducible solutions as \textit{minimal $L$-spaces}.
\end{remark}
As mentioned above, the proof of Theorem \ref{Thm1} exploits in an essential way the fact that the underlying manifold is equipped with a hyperbolic metric. While we do not know a direct way to relate the latter to the Seiberg-Witten equations, we use as stepping stone the spectral geometry of the Hodge Laplacian acting on coexact $1$-forms. Recall that on a Riemannian $3$-manifold with $b_1(Y)=0$ the Hodge decomposition implies the direct sum decomposition of $1$-forms
\begin{equation*}
\Omega^1(Y)=d\Omega^0(Y)\oplus d^* \Omega^2(Y)
\end{equation*}
into exact and coexact ones; the Hodge Laplacian $\Delta=(d+d^*)^2$ preserves the latter decomposition. We denote the spectrum of $\Delta$ acting on coexact $1$-forms $d^* \Omega^2(Y)$ by
\begin{equation*}
0<\lambda_1^*\leq \lambda_2^*\leq \lambda_3^*\leq \dots.
\end{equation*}
In the present paper, we will be mostly interested in the first eigenvalue $\lambda_1^*$. While not much is known in general about this quantity, it has recently attracted attention due to its relationship with a deep conjecture of Bergeron and Venkatesh \cite{BV} regarding the growth of torsion in the cohomology of arithmetic hyperbolic three-manifolds under towers of coverings. In our setting, its appearance as a stepping stone is synthesized in the following diagram. 

\begin{center}
\begin{tikzpicture}[auto,
    block_center/.style ={rectangle, draw=black, thick, fill=white,
      text width=11em, text centered,
      minimum height=4em},
      block_noborder/.style ={rectangle, draw=none, thick, fill=none,
      text width=11em, text centered, minimum height=1em},
      line/.style ={draw, thick, -latex', shorten >=0pt}]]
\matrix [column sep=5mm,row sep=3mm] {
      % enrollment - row 1
      \node [block_center] (SW) {Non-existence of irreducible solutions to the Seiberg-Witten equations};
      && \node [block_center] (hyp) {Hyperbolic geometry};\\
      \node [block_noborder] (est) {lower bounds};&&\node [block_noborder] (rep) {representation theory};\\
      &\node [block_center] (spec) {First eigenvalue on coexact $1$-forms $\lambda_1^*$};\\
};

\begin{scope}[every path/.style=line]
\path (spec)   -| (est);
\path (rep)   |- (spec);
\path (est)   -- (SW);
\path (hyp)   -- (rep);
\end{scope}

\end{tikzpicture}
\end{center}
\begin{remark}It is an interesting question whether there is any relation between Floer homology and the spectrum of the Laplacian on functions (or, equivalently, the spectrum on exact $1$-forms), as the latter is a much better understood quantity. While the appearance of $\lambda_1^*$ in our context stems naturally from the geometry of the Seiberg-Witten equations, we do not know of any interesting relation between the latter and the spectrum of the Laplacian of functions.
\end{remark}

Before discussing the relationship between $\lambda_1^\ast$ and the Seiberg-Witten equations, let us point out another of its applications.
\begin{teor}\label{Thm2}
For the hyperbolic three-manifolds from the Hodgson-Weeks census listed in Table \ref{table2}, the precise numerical bounds for $\lambda_1^*$ enumerated in that table hold true.  
\end{teor}
\begin{remark}
One should compare these computations with the case of the first eigenvalue on functions. While there are some numerical results in the latter case (especially in the astrophysics literature, see \cite{Ino}, \cite{CS}), these are based on heuristic computations. As we will see, a key input in the proof of Theorem \ref{Thm2} is given by the computations of the topological invariants arising from Seiberg-Witten theory.
\end{remark}
\begin{table}[h!]
 \begin{tabular}{|c| c| c| c| c|} 
 \hline
 Census label & $\lambda_1^\ast$ lower bound & $\lambda_1^\ast$ upper bound & Volume & Injectivity radius \\ 
 \hline\hline
 1 & 0.33749 & 0.33983 & 0.98136\ldots & 0.28904\ldots \\ 
 \hline
 4 & 0.61613 & 0.64594 & 1.28448\ldots & 0.24015\ldots  \\
 \hline
 6 & 0.58541  & 0.60133  &  1.41406\ldots & 0.39706\ldots \\
\hline
 7 & 0.27882 & 0.28224 & 1.41406\ldots & 0.18244\ldots \\
 \hline
 9 & 0.43598 & 0.97651 & 1.44070\ldots& 0.18076\ldots  \\
\hline
 19 & 0.68344 & 0.82304  & 1.75712\ldots & 0.35268\ldots \\
\hline
 23 & 0.50310 & 0.51433  & 1.83193\ldots & 0.24060\ldots \\
\hline
 24 & 0.31571 & 0.32022 & 1.83193\ldots & 0.26531\ldots \\
\hline
34  & 0.00131 & 0.00537 & 1.91221\ldots & 0.24958\ldots \\
\hline
45  & 0.60516 & 0.76929 & 2.02988\ldots & 0.27176\ldots \\
\hline
47  & 0.37043 & 0.38036  & 2.02988\ldots & 0.21563\ldots \\
\hline
48  & 0.28543 & 0.29030  & 2.02988\ldots & 0.27176\ldots \\
\hline
\end{tabular}
\caption{Bounds for $\lambda_1^\ast(Y)$ for the hyperbolic manifolds $Y$ of Theorem \ref{Thm2}.}
\label{table2}
\end{table}

The passage from lower bounds on $\lambda_1^\ast$ to the Seiberg-Witten equations relies upon the following refinement of the main theorem of \cite{Lin} as one key input.  See also \S \ref{SW} for a more detailed discussion.

\begin{comment}
The estimate in the diagram above uses the following refinement of the main theorem of \cite{Lin} as one key input.  See also \S \ref{SW} for a more detailed discussion.
\end{comment}

\begin{teor}\label{Thm3}
Let $Y$ be rational homology three-sphere equipped with a hyperbolic metric. If $\lambda_1^*> 2$, then for any spin$^c$ structure the Seiberg-Witten equations (for sufficiently small perturbations) do not admit irreducible solutions.
\end{teor}
On the other hand, the relationship between hyperbolic geometry and spectral geometry is provided by a specialization of the celebrated Selberg trace formula, which provides, for a Lie group $G$ and a lattice $\Gamma$ in it, a link between geometry and spectral theory (which can be thought as a non-abelian generalization of the classical Poisson summation formula). For simplicity, consider first the more familiar context of a closed hyperbolic surface $X,$ which corresponds to $G=\mathrm{PSL}_2(\mathbb{R})$ and $\Gamma=\pi_1(X)$. In this case, it was proven by Selberg (see \cite{Hej}) that once we label the eigenvalues with repetitions
\begin{equation*}
0 =: \lambda_0 <\lambda_1\leq\lambda_2\leq\lambda_3\leq\dots
\end{equation*}
of the Laplacian $\Delta$ acting on functions on $X$ as $\lambda_j=r_j^2+1/4$ with $r_j\in \mathbb{R}_{\geq0}\cup [0,1/2]\sqrt{-1}$, the following identity holds for every $g\in{C}^{\infty}_c(\mathbb{R})^{\mathrm{even}}$ :
\begin{equation}\label{tracesurface}
\sum_{j=0}^{\infty}\widehat{g}(r_j)=\frac{\mathrm{vol}(X)}{4\pi}\int_{-\infty}^{\infty}\widehat{g}(r)r\tanh(\pi r)dr+\sum_{\gamma\neq 1}\frac{\ell(\gamma_0)}{2 \sinh \left( \frac{\ell(\gamma)}{2} \right)} g(\ell(\gamma)).
\end{equation}
Here, $\widehat{g}(t) := \int_\mathbb{R} g(x) e^{-ix \cdot t} dx$ is the Fourier transform of $g$, and the sum on the right hand side runs over all non-trivial closed geodesics $\gamma$ on $X$. These correspond to non-trivial conjugacy classes in $\pi_1(X)$.  We denote by $\ell(\gamma)$ the length of $\gamma$, and $\gamma_0$ is the primitive geodesic of which $\gamma$ is a multiple. 

\begin{remark}\label{onequart}
The value $1/4$ plays a key role in the spectral theory of hyperbolic surfaces, because it is the bottom of the $L^2$-spectrum of the Laplacian on $\mathbb{H}^2$, see \cite{McKeanquarter} for a nice proof. Eigenvalues less than $1/4$ (i.e. for which the parameter $r_j$ is imaginary) are called \textit{small} \cite[Chapter 8]{Bus}, and are the protagonist of the famous Selberg $1/4$ conjecture \cite{SarSel}.
\end{remark}

The Selberg trace formula is a very powerful tool as it allows to extract seemingly inaccessible information regarding spectral geometry of $X$ via the understanding of the lengths of its geodesics; the latter quantities are directly computable from the traces of the elements $\pi_1(X)\subset \mathrm{PSL}_2(\mathbb{R})$.
\\
\par
In the present paper, we will derive a specialization of the general Selberg trace formula that relates, for a closed oriented hyperbolic three-manifold $Y,$ the following quantities:
\begin{itemize}
\item on the spectral side, the square roots of the eigenvalues of the Laplacian (with repetitions) on coexact $1$-forms $t_j=\sqrt{\lambda_j^*}$;
\item on the geometric side, the volume $\mathrm{vol}(Y)$ and the complex lengths $\mathbb{C}\ell(\gamma)$ of the closed geodesics $\gamma$ of $Y$.
\end{itemize}
Recall here that for a closed geodesic $\gamma$ in a hyperbolic three-manifold there is a notion of \textit{holonomy} $\mathrm{hol}(\gamma)$, namely how an orthonormal framing for the normal bundle of $\gamma$ is rotated under parallel transport along $\gamma$. The complex length of $\gamma$ is then given by
\begin{equation*}
\mathbb{C}\ell(\gamma) := \ell(\gamma) + i \; \mathrm{hol}(\gamma) \in \mathbb{R} + i \; (\mathbb{R} / 2\pi \mathbb{Z}).
\end{equation*}
As in the case of surfaces, these are directly computable in term of the traces of elements $\pi_1(Y)\subset \mathrm{PSL}_2(\mathbb{C})$. The formula is then the following.
\begin{teor}[Explicit Selberg trace formula for coexact 1-forms on closed hyperbolic 3-manifolds]\label{Thm4}
Let $Y$ be a closed oriented hyperbolic three-manifold, and let $H$ be an even, smooth, compactly supported, $\R$-valued function on $\R.$  Then the following identity holds:
\begin{align*}
\left( \frac{1}{2} b_1(Y) -\frac{1}{2} \right) \widehat{H}(0)+\frac{1}{2}\sum_{j=0}^{\infty}\widehat{H}(t_j) &= \frac{\mathrm{vol}(Y)}{2\pi}\cdot \left(H(0)-H''(0) \right)\\
&+\sum_{[\gamma]\neq 1}\ell(\gamma_0) \cdot \frac{\mathrm{cos}(\mathrm{hol}(\gamma))}{|1-e^{\mathbb{C}\ell(\gamma)}| \cdot |1-e^{-\mathbb{C}\ell(\gamma)}|}H \left( \ell(\gamma) \right),
\end{align*}
where $\widehat{H}(t) := \int_\mathbb{R} H(x) e^{-ix \cdot t} dx$ is the Fourier transform of $H.$
In fact, the identity also holds under weaker assumptions on $H$ (see Theorem \ref{geometrictraceformulallimitnew} for a more precise statement).  
\end{teor}
\begin{remark}\label{literature}While we could find other specializations of the Selberg trace formula to differential forms on hyperbolic three-manifolds in the literature (see \cite[Theorem 2]{Fried} and \cite[Chapter II, \S 1]{Millson}), none of them were sufficiently explicit for our purposes. We refer to Section \ref{traceformulanew} for a more detailed discussion.
\end{remark}

Taking as input computations of volume and length spectrum of $Y$ provided by SnapPy \cite{CDGW}, this formula can be used to show that for a given value $t\in\mathbb{R}_{\geq 0}$, $t^2$ is \textit{not} an eigenvalue of the Laplacian on coexact $1$-forms on $Y$. The specific procedure we use, inspired by the work of Booker and Strombergsson related to Selberg's $\frac{1}{4}$-conjecture \cite{BS}, is discussed thoroughly in \S \ref{bookermethod}. Granted this, let us discuss the logic behind the proof of our main results:
\begin{itemize}
\item 
For the spaces in Theorem \ref{Thm1}, we will use the Selberg trace formula to show that $t^2$ is not a coexact, 1-form eigenvalue for any $t^2 \in [0,2].$  Combined with Theorem \ref{Thm3}, this implies that there are no irreducible solutions to the Seiberg-Witten equations.
\item 
For the spaces in Theorem \ref{Thm2}, it is known that their reduced Floer homology $\mathit{HM}$ is non-vanishing. This implies that for an arbitrarily small regular perturbation, the Seiberg-Witten equations admit irreducible solutions, so that $\lambda_1^*\in(0,2]$. On the other hand, using Theorem \ref{Thm4} one can give in these examples a precise constraint on which elements in $[0,\sqrt{2}]$ can possibly be square roots of eigenvalues.  Combining with the existence result from Seiberg-Witten theory implies the precise bounds in Table \ref{table2}.
\end{itemize}

We conclude this introduction by discussing the two simplest examples in which our main results apply, see Figure \ref{diagrams}. \begin{figure}
  \centering
\def\svgwidth{0.8\textwidth}
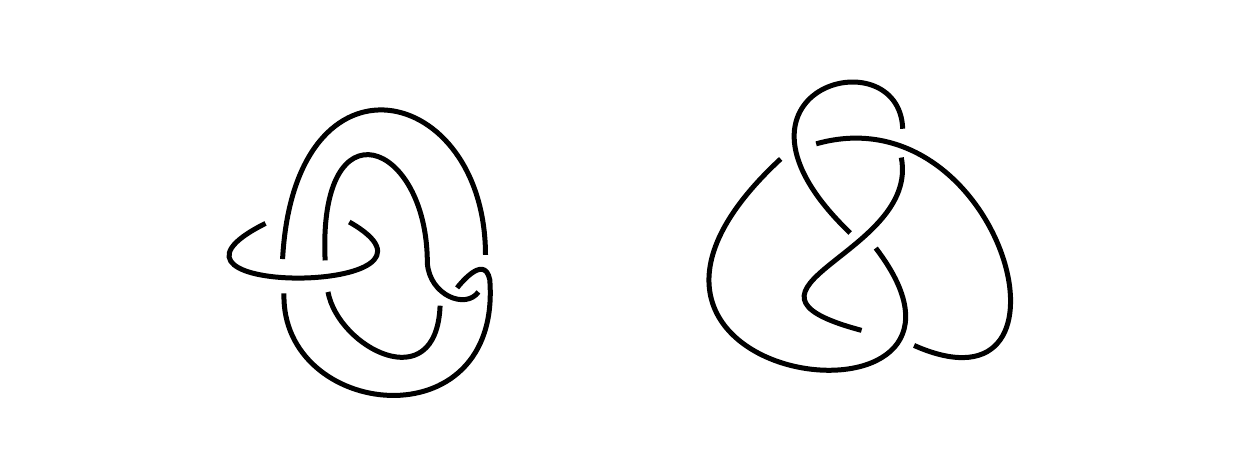
    \caption{Surgery diagrams for the Weeks and Meyerhoff manifolds, which are respectively the manifolds labeled $0$ and $1$ in the Hodgson-Weeks census. The link on the left is the Whitehead link, while the knot on the right is the figure-eight knot.}
    \label{diagrams}
\end{figure} 
The manifolds in the picture represent the ones labeled $0$ and $1$ in the Hodgson-Weeks census.  Both of these manifolds play a special role in hyperbolic geometry; the manifold on the left, the \textit{Weeks manifold}, is known to have the smallest volume $0.94...$ among closed, orientable hyperbolic three-manifolds \cite{GMM}, while the one on the right, the \textit{Meyerhoff manifold}, which has volume $0.98...$ was believed to have smallest volume for a long time. Using the surgery diagrams in Figure \ref{diagrams}, one can determine their Floer homology $\mathit{HM}$ and show, in particular, that the Weeks manifold is an $L$-space while the Meyerhoff manifold is not. Such a drastic difference is not reflected in basic quantities that are studied in hyperbolic geometry, as for example these manifolds have very similar volume and injectivity radius. On the other hand, these manifolds are drastically different from the point of view of the spectral geometry of coexact $1$-forms, as for the Weeks manifold $\lambda_1^*>8.9$, while for the Meyerhoff manifold $\lambda_1^*\sim 0.33$. We will provide a qualitative discussion of this drastic difference, in these and in more general examples, in \S \ref{further}.

\vspace{0.3cm}
\textit{Plan of the paper.} In \S\ref{SW} we provide some background material on monopole Floer homology, and discuss its relation with spectral geometry and in particular Theorem \ref{Thm3}. In \S\ref{traceformulanew} we provide the precise statement of Theorem \ref{Thm4} and discuss its significance and place in the existing literature. In \S\ref{bookersec} we discuss the computational technique of Booker and Strombergsson, and in \S\ref{computations} the outputs are presented. Finally, in \S\ref{further} we discuss the limitations of our method and some natural questions that arise.

\vspace{0.3cm}
\textit{Acknowledgements.} We would like to thank Matthew de Courcy-Ireland for suggesting this collaboration and the IAS for making it possible. We would like to thank Nathan Dunfield for thoroughly answering many of our questions, both regarding SnapPy and his work on $L$-spaces. We would also like to thank Akshay Venkatesh for some helpful conversations, and the anonymous referee thoroughly reading the manuscript and offering many suggestions that greatly helped to improve the exposition. The first author was partially supported by NSF grant DMS-1807242.

\vspace{0.5cm}

\section{The Seiberg-Witten equations and monopole Floer homology}\label{SW}
In this section we review the basic setup of Seiberg-Witten theory on a (closed, oriented, connected) three-manifold $Y$. We refer the reader to \cite{Lin2} for a more thorough introduction and to \cite{KM} for the quintessential reference.
\\
\par
\subsection{The geometric setup. }Consider on $Y$ a Riemannian metric and a spin$^c$ structure $\spin$. For our purposes, the best way to think about the latter is a rank two hermitian bundle $S\rightarrow Y$ together with a bundle map
\begin{equation*}
\rho: TY\rightarrow \mathrm{Hom}(S,S),
\end{equation*}
called Clifford multiplication, satisfying $\rho(v)^2=-|v|^2 1_S$. In coordinates, this means that for any oriented frame $e_1,e_2,e_3$ at a point $y$, we can find a basis of $S_y$ so that $\rho(e_i)$ is the Pauli matrix $\sigma_i$:
\begin{equation*}
\sigma_1=\begin{pmatrix}
i & 0\\
0 & -i
\end{pmatrix}, \quad
\sigma_2=\begin{pmatrix}
0 & -1\\
1 & 0
\end{pmatrix},\quad
\sigma_3=\begin{pmatrix}
0 & i\\
i & 0
\end{pmatrix}.
\end{equation*}
The Clifford multiplication can be extended to $1$-forms via the musical isomorphism, and to forms using the formula
\begin{equation*}
\rho(\alpha\wedge \beta)=\frac{1}{2}\left(\rho(\alpha)\rho(\beta)+(-1)^{|\alpha||\beta|}\rho(\beta)\rho(\alpha)\right).
\end{equation*}
In particular, if $\alpha$ is a $1$-form we have
\begin{equation}\label{cliffast}
\rho(\ast\alpha)=-\rho(\alpha).
\end{equation}
We consider the configuration space $\mathcal{C}(Y,\spin)$ consisting of pairs $(B,\Psi)$ where:
\begin{itemize}
\item $\Psi$ is a \textit{spinor}, i.e. a section of $\Gamma(S)$.
\item $B$ is a \textit{spin$^c$ connection} on $S$, i.e. a unitary connection for which $\rho$ is parallel, or, equivalently
\begin{equation}\label{parallel}
\nabla_B(\rho(X)\Psi)=\rho(\nabla X)\Psi+\rho(X)\nabla_B\Psi
\end{equation}
for any vector field $X$ and spinor $\Psi$. Here $\nabla$ is the Levi-Civita connection and $\nabla_B$ the covariant derivative associated to $B$.
\end{itemize}
The condition (\ref{parallel}) implies that the $\mathrm{SO}_3$-part of a spin$^c$ connection $B$ is determined by the Levi-Civita connection; as a consequence, $B$ is determined by the connection $B^t$ induced on the determinant line bundle $\mathrm{det}(S)$. In particular, the space of spin$^c$ connections is an affine space over $\Omega^1(Y;i\mathbb{R})$. 
\par
The space of configurations $\mathcal{C}(Y,\spin)$ is acted on by the group of automorphisms of the spin$^c$ structure, i.e. the \textit{gauge group} $\mathcal{G}(Y,\spin)=\mathrm{Maps}(Y,S^1)$, via
\begin{equation*}
u\cdot(B,\Psi)=(u^*B,u\cdot\Psi),
\end{equation*}
where $u^\ast B=B-u^{-1}du$ is the pullback connection.
\par
The stabilizer under the gauge group of the configuration $(B,\Psi)$ is trivial when $\Psi$ is not identically zero. On the other hand the stabilizer of a configuration of the form $(B,0)$ is given by the constant gauge transformations, so it is identified with $S^1$. We call the configurations of the first kind \textit{irreducible}, while the configurations of the second kind \textit{reducible}.
\par
For a fixed base connection $B_0$, the \textit{Chern-Simons-Dirac functional}
\begin{equation*}
\mathcal{L}:\mathcal{C}(Y,\spin)\rightarrow \mathbb{R}
\end{equation*}
is defined to be
\begin{equation*}
\mathcal{L}(B,\Psi)=-\frac{1}{8}\int_Y (B^t-B^t_0)\wedge( F_{B^t}+F_{B^t_0})+\frac{1}{2}\int_Y \langle D_B\Psi,\Psi\rangle \mathrm{d}vol.
\end{equation*}
Here $F_{B^t}$ denotes the curvature of the connection $B^t$ (hence a imaginary valued $2$-form), and $D_B$ is the Dirac operator associated to the connection $B$, i.e. the composition
\begin{equation*}
\Gamma(S)\stackrel{\nabla_B}{\longrightarrow}\Gamma( T^*X\otimes S)\stackrel{\rho}{\longrightarrow}\Gamma(S).
\end{equation*}
While the functional is invariant only under the action of the connected component of the gauge group, it descends to a well defined functional 
\begin{equation*}
\mathcal{L}:\mathcal{C}(Y,\spin)/\mathcal{G}(Y,\spin)\rightarrow \mathbb{R}/(2\pi^2\mathbb{Z})
\end{equation*}
on the moduli space of configurations. The critical points of the Chern-Simons-Dirac functional are given by the solutions $(B,\Psi)$ of the system
\begin{align*}
\frac{1}{2}\rho(\ast F_{B^t})+(\Psi\Psi^*)_0&=0\\
D_B\Psi&=0
\end{align*}
to which we refer to as the \textit{Seiberg-Witten equations} on $Y$. Here $(\Psi\Psi^*)_0$ is the traceless part fo $\Psi\Psi^*\in i\mathfrak{su}(S)$; in coordinates, if $\Psi=(\alpha,\beta)$, we have
\begin{equation}\label{quadraticSW}
(\Psi\Psi^*)_0=\begin{bmatrix}
\frac{1}{2}(|\alpha|^2-|\beta|^2) & \alpha\bar{\beta}\\
\bar{\alpha}\beta& \frac{1}{2}(|\beta|^2-|\alpha|^2)
\end{bmatrix}.
\end{equation}
\begin{remark}
The first equation often appears in the literature as $\frac{1}{2}\rho( F_{B^t})-(\Psi\Psi^*)_0=0$; these are equivalent because of Equation (\ref{cliffast}). Our choice makes more apparent the role of the coclosed $1$-form $\ast F_{B^t}$.
\end{remark}

\subsection{Monopole Floer homology and its applications. }One can apply the ideas of Morse homology to the Chern-Simons-Dirac functional $\mathcal{L}$ on the moduli space of configurations in order to define homological invariants of three-manifolds which are topological (i.e. independent of the initial choice of the metric). The final result is a package of invariants called \textit{monopole Floer homology} \cite{KM} (see also \cite{Fro}, \cite{MarWang} for alternative constructions). There are several complications to be handled, most notably the need to introduce a suitable space of \textit{regular} perturbations to the equations in order to achieve transversality, and the $S^1$-symmetry of the functional. In the setup of \cite{KM}, the latter is dealt with suitably blowing up the configurations space, and leads to the construction of $S^1$-equivariant Floer homology group.
\par
The simplest invariant arising from this construction is the \textit{reduced monopole Floer homology group} $\mathit{HM}(Y,\spin)$, which plays a central role when studying gluing formulas for the $4$-dimensional Seiberg-Witten invariants (see the classical reference \cite{Mor} for the latter).  It has also recently gained attention as it contains significant information regarding three-dimensional geometric structures; from this perspective it is convenient to consider the direct sum
\begin{equation*}
\mathit{HM}(Y)=\bigoplus_{\spin}\mathit{HM}(Y,\spin).
\end{equation*}
Objects of central study in three-dimensional geometry are coorientable taut foliations, i.e. coorientable $2$-dimensional foliations $\mathcal{F}$ equipped with a closed $2$-form $\omega$ which is positive on the leaves of $\mathcal{F}$. While criteria for the existence of such foliations has been provided by Gabai for manifolds with $b_1(Y)>0$ \cite{Gabai1}, a general characterization in the case of rational homology spheres is missing. In this sense, the following Floer theoretic obstruction from \cite{KMOS} plays a central role.
\begin{thm}[Theorem $2.1$ of \cite{KMOS}]\label{lspace}
Suppose $Y$ has $b_1(Y)=0$. If it admits a coorientable taut foliation, then $\mathit{HM}(Y)\neq 0$.
\end{thm}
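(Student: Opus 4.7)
My plan is to follow the Kronheimer--Mrowka--Ozsv\'ath--Szab\'o strategy: convert the taut foliation into a symplectic filling, and then appeal to a non-vanishing theorem for the Seiberg--Witten invariants of such fillings.

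First, I would apply Eliashberg--Thurston's perturbation theorem, which states that any coorientable taut foliation $\mathcal{F}$ on $Y$ admits arbitrarily $C^0$-small perturbations to a positive contact structure $\xi$ that is weakly symplectically semi-fillable; concretely, on a collar $[0,1]\times Y$ one constructs a symplectic form $\Omega_0$ whose restriction to the boundary slice tames $\xi$. Because $b_1(Y)=0$, one has $H^2(Y;\R)=0$ and in particular $[\Omega_0|_Y]=0$; following Eliashberg and Ohta--Ono, $\Omega_0$ can then be modified near the boundary, via a primitive of $\Omega_0|_Y$, to arrange that it is exact there with primitive a contact form for $\xi$. This upgrades the weak semi-filling to a genuine strong symplectic filling $(W,\Omega)$ of $(Y,\xi)$; this is the only place the foliation hypothesis is used.

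Next, I would equip $(W,\Omega)$ with the canonical $\mathrm{spin}^c$ structure $\spin_{\mathrm{can}}$ associated to an $\Omega$-compatible almost complex structure, attach the cylindrical end $[0,\infty)\times Y$, and perturb the Seiberg--Witten equations in Taubes' style using a two-form built from $\Omega$. Counting finite-energy solutions produces a relative invariant
\begin{equation*}
\Psi(W,\Omega) \in \HMf(Y,\spin_\xi),
\end{equation*}
and the main non-vanishing theorem of Kronheimer--Mrowka, resting on Taubes' identification SW $=$ Gr, asserts $\Psi(W,\Omega)\neq 0$. Using the exact triangle
\begin{equation*}
\cdots \to \HMb(Y) \to \HMf(Y) \to \mathit{HM}(Y) \to \cdots
\end{equation*}
and the fact that for a rational homology sphere $\HMb(Y)$ is concentrated in a prescribed range of gradings while Taubes' perturbation places $\Psi(W,\Omega)$ outside that range, one concludes that $\Psi(W,\Omega)$ cannot be lifted from $\HMb(Y)$, so its image in the reduced group $\mathit{HM}(Y)$ is non-zero.

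The main obstacle is the non-vanishing $\Psi(W,\Omega)\neq 0$. It requires Taubes' correspondence between Seiberg--Witten solutions and pseudoholomorphic curves on symplectic four-manifolds, uniform compactness estimates for monopole moduli spaces on non-compact cylindrical-end four-manifolds, and the detailed gluing theory underpinning Kronheimer--Mrowka's construction of $\HMf$. By contrast, the symplectic-topology input (Eliashberg--Thurston, the weak-to-strong upgrade) and the bookkeeping with the exact triangle are by comparison relatively formal.
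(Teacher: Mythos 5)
The paper gives no proof of this statement: it is quoted verbatim from KMOS, so your outline has to be measured against the argument given there. Your first two steps (Eliashberg--Thurston perturbation, and the Eliashberg/Ohta--Ono upgrade of a weak filling of a rational homology sphere to a strong filling) are consistent with that argument, but the final step has a genuine gap. Non-vanishing of the relative (contact) invariant of a \emph{single} strong filling $(W,\Omega)$ cannot by itself give $\mathit{HM}(Y)\neq 0$, and the grading argument you invoke is not available: $\HMb(Y)$ of a rational homology sphere is not concentrated in a bounded range of degrees (it consists of infinite towers), reduced $\mathit{HM}$ is not simply the third term of the triangle you wrote, and there is no principle placing $\Psi(W,\Omega)$ outside the image of $\HMb(Y)$. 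Indeed the four-ball is a Stein (hence strong) filling of the standard contact $S^3$ whose relative invariant is non-zero while $\mathit{HM}(S^3)=0$, and the standard Stein fillable contact structures on lens spaces give the same phenomenon on genuine $L$-spaces. So strong fillability of one contact structure is compatible with vanishing reduced Floer homology; any correct proof must use more of the foliation than that.

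What the cited argument actually uses is that the Eliashberg--Thurston construction yields a symplectic form on $[-1,1]\times Y$, positive on the leaves, which is simultaneously a weak filling of \emph{both} boundary components $(Y,\xi_+)$ and $(-Y,\xi_-)$. After the exactness modification near the boundary (possible since $b_1(Y)=0$), both ends can be capped off, by results of Eliashberg and Etnyre on concave fillings, to produce a closed symplectic $4$-manifold $X$ with $b_2^+(X)\geq 2$ in which $Y$ sits as a separating hypersurface with $b_2^+>0$ on each side. Taubes' theorem gives a non-zero Seiberg--Witten invariant of $X$ in the canonical spin$^c$ structure; on the other hand, if $\mathit{HM}(Y)=0$, the invariant of $X$, computed by cutting along $Y$, would factor through maps involving only $\HMb(Y)$, and such composites vanish when both pieces have $b_2^+>0$, forcing the invariant of $X$ to be zero --- a contradiction. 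It is this two-sided capping and the vanishing-through-$\HMb$ mechanism, not a grading computation for the relative invariant of one filling, that makes the proof work; to repair your write-up you should replace the last paragraph by this closed-manifold argument.
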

This highlights the class of \textit{$L$-spaces}, i.e. three-manifolds $Y$ satisfying $b_1(Y)=0$ and $\mathit{HM}(Y)= 0.$ This notion corresponds to the analogous notion of $L$-space in Heegaard Floer homology (i.e. spaces for which $HF_{\mathrm{red}}(Y)=0$) via the isomorphism between the theories (see \cite{KLT}, \cite{CGH}, and subsequent papers). 
In fact, it was conjectured by Ozsv\'ath and Szab\' o that, under the assumption that $Y$ is irreducible, the converse of Theorem \ref{lspace} holds. Furthermore, the concepts of $L$-spaces and taut foliations are also conjecturally related to the existence of left-invariant orders on the fundamental group of $Y$ \cite{BGW}. Such conjectures have been proved in the case of graph manifolds \cite{HRRW}, and have been verified in some families of hyperbolic three-manifolds \cite{Dun}.
\par
Even though the definition of the invariant $\mathit{HM}(Y)$ involves the solution of certain non-linear PDEs, its computation can be carried over in several cases (including those in Figure \ref{diagrams}) using topological techniques, most notably the surgery exact triangle \cite{KMOS}. It can also be computed in a (practically infeasible) purely combinatorial fashion \cite{SW}.
\\
\par
\subsection{Relation with spectral geometry.} 
We will focus from now on the case of a rational homology sphere $Y$. If $Y$ admits a metric such that for all spin$^c$ structures (suitable small perturbations of) the Seiberg-Witten equations do not admit \textit{irreducible} solutions, then $\mathit{HM}(Y)= 0$. On the other hand, very little is known in general about the set of solutions to the Seiberg-Witten equations itself other than on manifolds which have positive scalar curvature or are flat (see \cite{KM}). In this case, one can show that the equations do not admit irreducible solutions for suitable small perturbations by means of a Bochner type argument involving the Weitzenb\"ock formula. The case of Seifert manifolds can be understood if one studies a different set of equations where the Levi-Civita connection is replaced by a non standard reducible one \cite{MOY}. As a refinement of argument in the first case, we will now discuss the following.
\begin{thm}\label{spectralnew}
Let $Y$ be a rational homology sphere equipped with a Riemannian metric $g$. Let $\lambda_1^*$ be the least eigenvalue of the Laplacian on coexact $1$-forms, and $\tilde{s}(p)$ the sum of the two least eigenvalues of the Ricci curvature at $p$. If $\lambda_1^*>-\mathrm{inf}_{p\in Y} \tilde{s}(p)/2$ then for all spin$^c$ structures on $(Y,g)$ the Seiberg-Witten equations (for sufficiently small perturbations) have no irreducible solutions.
\end{thm}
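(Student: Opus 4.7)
The plan is to refine the classical Bochner--Weitzenböck argument---which excludes irreducible Seiberg--Witten solutions when the scalar curvature is positive---by replacing the role of scalar curvature with an interplay between $\lambda_1^*$ and the Ricci tensor. The key auxiliary object is the coexact real $1$-form $\alpha := -i * F_{B^t}$ canonically associated to any irreducible solution $(B, \Psi)$. That $\alpha$ is coexact follows from Bianchi's identity $dF_{B^t} = 0$ combined with the vanishing of $H^1(Y; \mathbb{R})$, and a pointwise computation using the curvature equation $\frac{1}{2}\rho(F_{B^t}) = (\Psi\Psi^*)_0$ together with the fact that $\rho$ doubles the Hilbert--Schmidt norm on $2$-forms yields the pointwise identity $|\alpha|^2 = |\Psi|^4$.

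The spinorial input comes from two integrated Lichnerowicz identities. The Dirac equation $D_B \Psi = 0$ and the Lichnerowicz formula give $\nabla_B^* \nabla_B \Psi + \tfrac{s}{4}\Psi + \tfrac{|\Psi|^2}{2}\Psi = 0$; pairing with $\Psi$ and integrating yields the classical identity
\[
\int_Y \Bigl( |\nabla_B \Psi|^2 + \tfrac{s}{4}|\Psi|^2 + \tfrac{1}{2}|\Psi|^4 \Bigr) = 0,
\]
and pairing instead with $|\Psi|^2 \Psi$ and integrating by parts yields
\[
\int_Y \Bigl( \tfrac{1}{2}\bigl| d|\Psi|^2 \bigr|^2 + |\Psi|^2 |\nabla_B \Psi|^2 + \tfrac{s}{4}|\Psi|^4 + \tfrac{1}{2}|\Psi|^6 \Bigr) = 0.
\]
Differentiating the curvature equation and carrying out a direct algebraic computation---extracting the decomposition $\langle \nabla_B \Psi, \Psi \rangle = \tfrac{1}{2} d|\Psi|^2 + i\beta$, where $\beta := \mathrm{Im}\langle \nabla_B \Psi, \Psi \rangle$ is a real $1$-form---produces the pointwise identity $|\nabla \alpha|^2 = 4|\Psi|^2 |\nabla_B \Psi|^2 - 4|\beta|^2$.

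The variational characterization of $\lambda_1^*$ on coexact $1$-forms, combined with the Bochner--Weitzenböck identity $\Delta = \nabla^* \nabla + \mathrm{Ric}$ on $1$-forms, then gives
\[
\lambda_1^* \int_Y |\alpha|^2 \;\leq\; \int_Y \bigl( |\nabla \alpha|^2 + \mathrm{Ric}(\alpha, \alpha) \bigr).
\]
The strategy is to combine this inequality with the two integrated Lichnerowicz identities and the pointwise expression for $|\nabla \alpha|^2$, in order to eliminate the $|\nabla_B \Psi|^2$ and $|\Psi|^6$ contributions and isolate an integrated bound of the shape $(\lambda_1^* + \tfrac{1}{2}\tilde{s}(p))|\Psi|^4 \leq (\text{manifestly non-positive terms})$. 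Geometrically, the Weitzenböck term $\mathrm{Ric}(\alpha,\alpha)$ contributes the Ricci curvature along the $\alpha$-direction, while the second Lichnerowicz identity supplies complementary information about the $2$-plane orthogonal to $\alpha$; since $\tilde{s}(p)$ is by definition a lower bound for the Ricci trace on any $2$-plane at $p$, the two contributions combine to produce the bound involving $\tfrac{1}{2}\tilde{s}$.

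The main obstacle is the algebraic accounting in this final step. A clean organizing principle is to further decompose $\nabla_B \Psi = \nabla_B \Psi^\parallel + \nabla_B \Psi^\perp$ into components parallel and perpendicular to $\Psi$ in the spinor bundle, giving the refined pointwise identity $|\nabla \alpha|^2 = \bigl| d|\Psi|^2 \bigr|^2 + 4|\Psi|^2 |\nabla_B \Psi^\perp|^2$. The term $|\nabla_B \Psi^\perp|^2$ must then be estimated using the refined Kato inequality for harmonic spinors in dimension three together with a Weitzenböck formula on the orthogonal $2$-plane, whose curvature supplies the Ricci trace on that plane. Tracking the constants carefully so as to obtain the sharp factor $\tilde{s}/2$---rather than the cruder bound involving only the smallest Ricci eigenvalue---is the principal technical challenge; once this is carried out, the hypothesis $\lambda_1^* > -\inf_{p\in Y} \tilde{s}(p)/2$ forces $\Psi \equiv 0$, contradicting the assumption that $(B, \Psi)$ is an irreducible solution.
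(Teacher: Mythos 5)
Your setup reproduces the paper's framework almost exactly: the coexact $1$-form $\alpha$ (a multiple of the paper's $\xi=\rho^{-1}(\Psi\Psi^*)_0$), the weighted Lichnerowicz identity obtained by pairing with $|\Psi|^2\Psi$, the Bochner formula $\int|d\alpha|^2=\int|\nabla\alpha|^2+\int\mathrm{Ric}(\alpha,\alpha)$ together with the variational characterization of $\lambda_1^*$, and the observation that $s-\mathrm{Ric}(\hat\alpha,\hat\alpha)$ is the Ricci trace on the plane orthogonal to $\alpha$, hence bounded below by $\tilde s$. Your pointwise identity $|\nabla\alpha|^2=4|\Psi|^2|\nabla_B\Psi|^2-4|\beta|^2$ is also correct (it is in fact a purely algebraic identity for any configuration, given the curvature equation identifying $\alpha$ with $2\xi$). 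However, the step you defer as the ``principal technical challenge'' is exactly where the theorem's factor of $2$ lives, and your stated ingredients cannot produce it. With only what you list, the term $-4|\beta|^2$ is an unidentified nonpositive contribution that can merely be discarded; doing the bookkeeping, the inequality $\lambda_1^*\int|\alpha|^2\le\int|\nabla\alpha|^2+\int\mathrm{Ric}(\alpha,\alpha)$ combined with the weighted Lichnerowicz identity then yields only $\lambda_1^*\le-\inf_p\tilde s(p)$, i.e.\ the older theorem of \cite{Lin}, not the threshold $-\inf_p\tilde s(p)/2$ claimed here.

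The missing idea is the paper's Lemma, which uses the Dirac equation $D_B\Psi=0$ to identify the exterior derivative of the form with your $\beta$: in your normalization, $\ast d\alpha=\pm 2\,\mathrm{Im}\langle\Psi,\nabla_B\Psi\rangle$, so that pointwise $|d\alpha|^2=4|\beta|^2$ and your identity becomes
\begin{equation*}
|\nabla\alpha|^2+|d\alpha|^2=4|\Psi|^2|\nabla_B\Psi|^2 .
\end{equation*}
Integrating and substituting the Bochner formula $\int|\nabla\alpha|^2=\int|d\alpha|^2-\int\mathrm{Ric}(\alpha,\alpha)$ gives
\begin{equation*}
4\int|\Psi|^2|\nabla_B\Psi|^2=2\int|d\alpha|^2-\int\mathrm{Ric}(\alpha,\alpha)\;\ge\;2\lambda_1^*\int|\Psi|^4-\int r_{\max}|\Psi|^4,
\end{equation*}
and it is the coefficient $2\lambda_1^*$ (coming from the \emph{two} copies of $\|d\alpha\|^2$) that, fed into the weighted Lichnerowicz identity, produces $\int|\Psi|^6+\int\bigl(\lambda_1^*+\tfrac{\tilde s}{2}\bigr)|\Psi|^4\le0$ and hence the stated hypothesis $\lambda_1^*>-\inf\tilde s/2$. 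Without the identification $|d\alpha|=2|\beta|$ this doubling is unavailable, and the substitute tools you propose for the final step --- the refined Kato inequality and a ``Weitzenb\"ock formula on the orthogonal $2$-plane'' --- do not supply it: what is needed is a \emph{lower} bound for $\int|\Psi|^2|\nabla_B\Psi|^2$ in terms of $\lambda_1^*\int|\Psi|^4$, which Kato-type inequalities (upper bounds on $|d|\Psi||$ by $|\nabla_B\Psi|$) cannot give. Also note that your first integrated identity (pairing with $\Psi$) is never needed. So the proposal is a correct reconstruction of the framework but has a genuine gap precisely at the refinement that distinguishes this theorem from the main result of \cite{Lin}.
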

Theorem \ref{spectralnew} is a slight refinement of the main result of \cite{Lin}, for which the stronger assumption $\lambda_1^*>-\mathrm{inf}_{p\in Y} \tilde{s}(p)$ is required. As for a hyperbolic metric $\tilde{s}=-4$ everywhere, Theorem \ref{Thm3} follows. 

While there are qualitative results on the behavior of $\lambda_1^*$ for hyperbolic three-manifolds \cite{Jam},\cite{McG}, the goal of this paper is to find examples of hyperbolic three-manifolds for which the explicit bound $\lambda_1^*>2$ holds. In fact, the slight improvement on the main theorem from \cite{Lin} provided by the inequality in Theorem \ref{spectralnew} will be crucial for drawing conclusions in many of the examples of Theorem \ref{Thm1}.
\\
\par
The main theorem of \cite{Lin} uses, at one important step, the inequality 
\begin{equation}\label{weak}
|\nabla\xi|^2\leq |\Psi|^2|\nabla_B\Psi|^2 \text{ for } \xi=\rho^{-1}(\Psi\Psi^*)_0;
\end{equation}
this holds for any configuration $(B,\Psi),$ not necessarily solving the Seiberg-Witten equations.  The key observation behind the improvement in Theorem \ref{spectralnew} is the following refinement for a configuration $(B,\Psi)$ which \emph{does} solve the Seiberg-Witten equations.  
\begin{prop}
Let $(B,\Psi)$ be a solution to the Seiberg-Witten equations, and $\xi=\rho^{-1}(\Psi\Psi^*)_0.$  Then the pointwise identity
\begin{equation*}\label{equal}
|\nabla\xi|^2+|d\xi|^2= |\Psi|^2|\nabla_B\Psi|^2
\end{equation*}
holds.
\end{prop}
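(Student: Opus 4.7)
My plan is to verify the identity pointwise. At any point $p$ where $\Psi(p)=0$ both sides vanish (applying Leibniz to $\rho(\xi)=\Psi\Psi^*-\tfrac{1}{2}|\Psi|^2\,\mathrm{Id}$ shows $\rho(\nabla_X\xi)$ is quadratic in $\Psi$ and $\nabla^B\Psi$), so I work at a point $p$ with $\Psi(p)\ne 0$, using a spinor frame of $S_p$ adapted to $\Psi$ together with a tangent frame adapted via Clifford multiplication. Set $\hat\Psi=\Psi/|\Psi|$, pick a unit spinor $\hat\Phi\in S_p$ orthogonal to $\hat\Psi$ (possible as $S$ has rank two), and choose an orthonormal frame $e_1,e_2,e_3$ of $T_pY$ such that $\rho(e_i)=\sigma_i$ in the basis $\{\hat\Psi,\hat\Phi\}$. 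Expand
\[
\nabla^B_{e_i}\Psi \;=\; |\Psi|\,(a_i\hat\Psi + b_i\hat\Phi), \qquad a_i,b_i\in\mathbb{C},
\]
so that $|\Psi|^2|\nabla_B\Psi|^2=|\Psi|^4\sum_i(|a_i|^2+|b_i|^2)$.

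Differentiating $\rho(\xi)=\Psi\Psi^*-\tfrac{1}{2}|\Psi|^2\,\mathrm{Id}$ and using the compatibility (\ref{parallel}) (which gives $\nabla(\rho(\xi))=\rho(\nabla\xi)$), a short calculation produces
\[
\rho(\nabla_{e_i}\xi) \;=\; |\Psi|^2 \begin{pmatrix} \mathrm{Re}(a_i) & \bar b_i \\ b_i & -\mathrm{Re}(a_i) \end{pmatrix}.
\]
Reading off the components of $\nabla\xi$ from $\rho(\nabla_{e_i}\xi)=\sum_j(\nabla_{e_i}\xi)(e_j)\sigma_j$ and using the isometry $|\rho(\eta)|^2=2|\eta|^2$ for $1$-forms $\eta$, summing over $i$ yields
\[
|\nabla\xi|^2 \;=\; |\Psi|^4\sum_i\bigl((\mathrm{Re}(a_i))^2+|b_i|^2\bigr) \;=\; |\Psi|^2|\nabla_B\Psi|^2 \;-\; |\Psi|^4\sum_i(\mathrm{Im}(a_i))^2.
\]
This already reproduces the weak inequality (\ref{weak}) as a direct consequence, and reduces the proposition to the pointwise identity $|d\xi|^2 = |\Psi|^4\sum_i(\mathrm{Im}(a_i))^2$.

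The final step invokes the Seiberg-Witten equations through $D_B\Psi=0$. In our frame the Dirac equation $\sum_i\sigma_i\,\nabla^B_{e_i}\Psi=0$ becomes the two complex linear relations $b_2=i(a_1+b_3)$ and $b_1=a_3-ia_2$. With the components of $\nabla\xi$ in hand, I then antisymmetrize $(d\xi)(e_i,e_j)=(\nabla_{e_i}\xi)(e_j)-(\nabla_{e_j}\xi)(e_i)$ and substitute the Dirac relations. A striking cancellation occurs: the real parts of the $a_i$ and $b_i$ disappear, and what remains is $(d\xi)(e_i,e_j)= i\,|\Psi|^2\,\varepsilon_{ijk}\,\mathrm{Im}(a_k)$. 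Squaring and summing over $i<j$ gives $|d\xi|^2=|\Psi|^4\sum_i(\mathrm{Im}(a_i))^2$, completing the proof.

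The hard part is this last step. Without the Dirac equation the antisymmetrizations involve both the $a_i$ and the $b_i$, and there is no a priori reason they should collapse into the purely $\mathrm{Im}(a_i)$-expression demanded by the \emph{phase defect} $|\Psi|^4\sum_i(\mathrm{Im}(a_i))^2$ appearing in the formula for $|\Psi|^2|\nabla_B\Psi|^2-|\nabla\xi|^2$. It is precisely the solution condition $D_B\Psi=0$ that enforces this pointwise miracle, and it is what upgrades the inequality (\ref{weak}) to the advertised equality.
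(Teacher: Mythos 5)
Your proof is correct, and I verified its two key computations: the Leibniz rule applied to $\rho(\xi)=\Psi\Psi^*-\tfrac{1}{2}|\Psi|^2\,\mathrm{Id}$ does give $\rho(\nabla_{e_i}\xi)=|\Psi|^2\bigl(\begin{smallmatrix}\mathrm{Re}(a_i) & \bar b_i\\ b_i & -\mathrm{Re}(a_i)\end{smallmatrix}\bigr)$, hence $|\nabla\xi|^2=|\Psi|^4\sum_i\bigl(\mathrm{Re}(a_i)^2+|b_i|^2\bigr)$; your relations $b_2=i(a_1+b_3)$, $b_1=a_3-ia_2$ are exactly the Dirac system appearing in the paper's Lemma (with $(\alpha_i,\beta_i)=|\Psi|(a_i,b_i)$ and the second spinor component vanishing at $p$), and substituting them into the antisymmetrization indeed yields $(d\xi)(e_i,e_j)=i|\Psi|^2\varepsilon_{ijk}\mathrm{Im}(a_k)$, so $|d\xi|^2$ equals the defect $|\Psi|^4\sum_i\mathrm{Im}(a_i)^2$; the zero set of $\Psi$ is handled correctly as well. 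Your route has the same computational core as the paper's — a pointwise Pauli-frame calculation in which the Dirac equation is the decisive input — but it is organized differently. The paper works in a general spinor frame with $\Psi=(\alpha,\beta)$, first proves the Lemma $\ast d\xi=i\,\mathrm{Im}\langle\Psi,\nabla_B\Psi\rangle$, and then deduces the Proposition from a sum-of-squares identity in which the mixed terms cancel; you instead gauge $\Psi$ along the first basis spinor (effectively $\beta(p)=0$), compute $|\nabla\xi|^2$ and $|d\xi|^2$ separately, and exhibit the failure of the weak inequality (\ref{weak}) as the phase term $|\Psi|^4\sum_i\mathrm{Im}(a_i)^2$. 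Your identity $(d\xi)(e_i,e_j)=i\,\varepsilon_{ijk}\,\mathrm{Im}\langle\Psi,\nabla^B_{e_k}\Psi\rangle$ is precisely the paper's Lemma read in the adapted frame, so nothing essentially new is used; what your normalization buys is cleaner bookkeeping (no cancellation of cross terms, and the weak inequality together with its exact failure mode drop out for free), at the small cost of treating $\{\Psi=0\}$ separately and of justifying the existence of a tangent frame with $\rho(e_i)=\sigma_i$ relative to a prescribed unitary spinor basis — which is fine, since conjugation by $SU(2)$ realizes every rotation of the Pauli triple.
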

\begin{proof}
We will prove this identity at a point $p$ by a computation in coordinates. Fix an oriented orthonormal frame $e_1,e_2,e_3$ which we can assume to be syncronous at $p$ (i.e. $\nabla_{e_i}e_j(p)=0$), and consider a basis of the spinor bundle $S$ for which $\rho$ is represented by the Pauli matrices. We will write in this basis $\Psi=(\alpha,\beta)$. Locally, the covariant derivative $\nabla_B$ is obtained from the spin covariant derivative by adding an imaginary valued one form, see Equation ($3.2$) in \cite{Mor}; as the frame is syncronous at $p$, the spin covariant derivative on $S$ is just the standard derivative $\nabla$ at $p$, and after a gauge transformation, we can assume that the covariant derivative $\nabla_B$ on $S$ is also the standard derivative $\nabla$ at the point $p$. We will therefore write $\nabla_B\Psi=(\nabla\alpha,\nabla\beta)$ at $p$ as
\begin{equation*}
\nabla\alpha=\sum (\nabla_i\alpha) e^i,\quad \nabla\beta=\sum (\nabla_i\beta) e^i.
\end{equation*}
From Equation (\ref{quadraticSW}), we have
\begin{equation}\label{xi}
\xi=-i\left(\frac{1}{2}(|\alpha|^2-|\beta|^2)e^1-\mathrm{Im}(\bar{\alpha}\beta)e^2+\mathrm{Re}(\bar{\alpha}\beta)e^3\right).
\end{equation}
We have
\begin{equation}\label{finalid}
|\Psi|^2|\nabla_B\Psi|^2=(|\alpha|^2+|\beta|^2)\cdot(|\nabla\alpha|^2+|\nabla\beta|^2).
\end{equation}
We claim that the following identity holds:
\begin{equation}\label{claimedid}
|d\xi|^2=|\mathrm{Im}(\bar{\alpha}\nabla\alpha+\bar{\beta}\nabla\beta)|^2=|\mathrm{Im}(\bar{\alpha}\nabla\alpha-{\beta}\overline{\nabla\beta})|^2.
\end{equation}
In fact, we have from equation (\ref{xi}) that at $p$ the identity
\begin{equation*}
|\nabla\xi|^2=|\mathrm{Re}(\bar{\alpha}\nabla\alpha-{\beta}\overline{\nabla\beta})|^2+|\mathrm{Im}(\bar{\alpha}\nabla\beta+(\nabla\bar{\alpha})\beta)|^2+|\mathrm{Re}(\bar{\alpha}\nabla\beta+(\nabla\bar{\alpha})\beta)|^2,
\end{equation*}
holds, where we used that the framing is syncronous. Therefore
\begin{equation*}
|\nabla\xi|^2+|d\xi|^2=|\bar{\alpha}\nabla\alpha-{\beta}\overline{\nabla\beta}|^2+|\bar{\alpha}\nabla\beta+(\overline{\nabla{\alpha}})\beta|^2.
\end{equation*}
Expanding, the mixed terms cancel out and we are left with (\ref{finalid}).
\\
\par
We now show that (\ref{claimedid}) holds.
The Dirac equation $D_B\Psi=0$ is equivalent to the system
\begin{align*}
i\nabla_1\alpha-\nabla_2\beta+i\nabla_3\beta&=0\\
-i\nabla_1\beta+\nabla_2\alpha+i\nabla_3\alpha&=0.
\end{align*}
Using the identity
\begin{equation*}
d\xi=\sum_i e^i\wedge \nabla_{e_i}\xi,
\end{equation*}
we can compute the $-ie^1\wedge e^2$ component of $d\xi$ (recalling that $\mathrm{Re}(iz)=-\mathrm{Im}(z)$) evaluated at $p$ (using again that the framing is syncronous) as follows:
\begin{align*}
=&-\mathrm{Re}((\nabla_2\alpha)\bar{\alpha}-(\nabla_2\beta)\bar{\beta})-\mathrm{Im}((\nabla_1\bar{\alpha})\beta+\bar{\alpha}(\nabla_1\beta))\\
=&-\mathrm{Re}((\nabla_2\alpha)\bar{\alpha}-(\nabla_2\beta)\bar{\beta})+\mathrm{Re}(i({\nabla_1{\bar{\alpha}}})\beta+i\bar{\alpha}(\nabla_1\beta))\\
=&-\mathrm{Re}((\nabla_2\alpha)\bar{\alpha}-(\nabla_2\beta)\bar{\beta})+\mathrm{Re}(-i(\nabla_1\alpha)\bar{\beta}+i\bar{\alpha}(\nabla_1\beta))\\
=&+\mathrm{Re}(\bar{\alpha}(-(\nabla_2\alpha)+i(\nabla_1\beta)))+\mathrm{Re}(\bar{\beta}((\nabla_2\beta)-i(\nabla_1\alpha)))\\
=&+\mathrm{Re}(\bar{\alpha}i(\nabla_3\alpha)))+\mathrm{Re}(\bar{\beta}i(\nabla_3\beta))\\
=&-\mathrm{Im}((\nabla_3\alpha)\bar{\alpha}+(\nabla_3\beta)\bar{\beta})
\end{align*}
where we used the Dirac equation and the fact that for the standard derivative $\nabla_1{\bar{\alpha}}=\overline{\nabla_1\alpha}$. Hence the $ie^3$ component of $\ast d \xi$ is $\mathrm{Im}((\nabla_3\alpha)\bar{\alpha}+(\nabla_3\beta)\bar{\beta})$. The computation for the remaining two components is analogous, and identity (\ref{claimedid}) follows.\end{proof}

\vspace{0.3cm}
Finally, we can discuss how the refinement of Theorem \ref{spectralnew} works in light of this estimate

\begin{proof}[Proof of Theorem \ref{spectralnew}]
Let us quickly review the proof of the slightly weaker inequality of \cite{Lin} (we refer the reader to the paper for more details). We assume for simplicity of notation that the metric is hyperbolic, so that the Ricci curvature is constantly $-2$; in particular, we will prove the statement of Theorem \ref{Thm3}. Given a solution $(B,\Psi)$ to the Seiberg-Witten equations, the Weitzenb\"ock formula and the equation involving the curvature $F_{B^t}$ imply the identity
\begin{equation*}
\Delta|\Psi|^2=2\langle \Psi, \nabla_B^*\nabla_B \Psi\rangle -2 |\nabla_B\Psi |^2=-|\Psi|^4+3|\Psi|^2-2|\nabla_B\Psi|^2.
\end{equation*}
Multiplying this by $|\Psi|^2$, and integrating over the manifold, we obtain by Green's identity
\begin{equation}\label{green}
\int |\Psi|^6-3|\Psi|^4+2|\Psi|^2|\nabla_B\Psi|^2=-\int |\Psi|^2\Delta|\Psi|^2=-\int \lvert d |\Psi|^2\rvert^2\leq 0.
\end{equation}
The Bochner formula states that on $1$-forms:
\begin{equation*}
(d+d^*)^2=\nabla^*\nabla+\mathrm{Ric}.
\end{equation*}
The curvature $F_{B^t}$ is closed by the Bianchi identity, and therefore our form $\xi=\rho^{-1}(\Psi\Psi^*)_0=-\frac{1}{2}\ast F_{B^t}$ is coclosed. Hence we have
\begin{equation*}
\|\nabla\xi\|^2_{L^2}= \|d\xi\|^2_{L^2}+2\|\xi\|^2_{L^2}\geq (2+\lambda_1^*)\|\xi\|^2_{L^2}
\end{equation*}
where we used the variational definition of $\lambda_1^*$ in the last inequality. Hence, the weak inequality (\ref{weak}) implies
\begin{equation*}
\int|\Psi|^2|\nabla_B\xi|^2\geq  \|\nabla\xi\|^2_{L^2}\geq (2+\lambda_1^*)\|\xi\|^2_{L^2}=\frac{1}{4}(2+\lambda_1^*)\|\Psi\|^4_{L^4},
\end{equation*}
where we used the pointwise identity $|\xi|^2=\frac{1}{4}|\Psi|^4$. Combining this with $(\ref{green})$ we get
\begin{equation*}
\int |\Psi|^6+\frac{1}{2}(\lambda_1^*-4)|\Psi|^4\leq 0
\end{equation*}
so that if $\lambda_1^*\geq4$, $\Psi$ is identically zero, i.e. the Seiberg-Witten equations have no irreducible solutions.
\par
Let us now show how to refine the inequality. Using the identity in Proposition \ref{equal} we obtain
\begin{align*}
\int|\Psi|^2|\nabla_B\xi|^2&= \|d\xi\|^2_{L^2}+ \|\nabla\xi\|^2_{L^2}=\\
&=2\|d\xi\|^2_{L^2}+2\|\xi\|^2\geq (2+2\lambda_1^*)\|\xi\|^2_{L^2}=\frac{1}{2}(\lambda_1^*+1)\|\Psi\|^4_{L^4}.
\end{align*}
Combining this with (\ref{green}), we see that the inequality
\begin{equation*}
\int |\Psi|^6+(\lambda_1^*-2)|\Psi|^4\leq 0
\end{equation*}
holds, so that if $\lambda_1^*\geq 2$, $\Psi$ is identically zero. Finally, under the assumption $\lambda_1^*>2$, the estimates continue to hold when we look at small perturbations of the equations, and the result follows.
\end{proof}

Let us point out that as a direct consequence of our discussion, if $Y$ is a hyperbolic rational homology sphere with $\lambda_1^*>2$, then it is an $L$-space. The converse of this is not true. For example, consider $K$ to be the $(-2,3,7)$-pretzel knot. This is a hyperbolic knot, and it is well known that it admits a lens space (hence $L$-space) surgery \cite{FS1}. In particular, for $n$ large enough the manifold $S^3_n(K)$ obtained by $n$-surgery is an $L$-space \cite{KMOS} and is also hyperbolic by a celebrated result of Thurston. Furthermore, for this family of hyperbolic three-manifolds the diameter goes to infinity while the volume stays bounded above. Then a result of McGowan \cite{McG} implies that $\lambda_1^*(Y_n)$ converges to zero, see also \S \ref{further}.

\vspace{0.3cm}

\vspace{0.5cm}
\section{The trace formula} \label{traceformulanew}

Our basis for estimating $\lambda_1^\ast$ on hyperbolic 3-manifolds is the Selberg trace formula.  Introduced by Selberg \cite{Selberg1}, \cite{Selberg2}, this formula relates geometric data on locally symmetric spaces, e.g. lengths of closed geodesics and their holonomies, to spectral data, e.g. eigenvalues of Laplace operators on forms.  The specialization to hyperbolic 3-manifolds of the trace formula that we will use is the following:

\begin{thm}[Geometric trace formula for coexact 1-forms on hyperbolic 3-manifolds] \label{geometrictraceformulacoexact1formsnew}
Let $H$ be any even, compactly supported, smooth $\R$-valued function on $\R.$  Then the equality
\begin{align*}
&{} \sum_{\lambda^{\ast} = \text{coexact 1-form eigenvalue}} \frac{1}{2} \cdot \widehat{H} \left( \sqrt{\lambda^\ast} \right) + \left( \frac{1}{2} b_1 \left( Y \right) - \frac{1}{2}\right) \widehat{H}(0) \\
&=  \frac{\vol(Y)}{2\pi} \cdot \left(  H(0) - H''(0) \right) \\
&+ \sum_{[\gamma] \neq 1} \ell(\gamma_0) \cdot \left( |1 - e^{\C \ell(\gamma)} | \cdot |1 - e^{-\C \ell(\gamma)}| \right)^{-1} \cdot H(\ell(\gamma)) \cdot \cos( \mathrm{hol}(\gamma))
\end{align*}
holds. In the above formula,
\begin{itemize}
\item
Every eigenvalue $\lambda^\ast$ is summed with multiplicity equal to the dimension of the $\lambda^\ast$-eigenspace for the Laplacian acting on coexact 1-forms on $Y.$

\item
$\widehat{H}(t) := \int_\mathbb{R} H(x) e^{-ix \cdot t} dx$ is the Fourier transform of $H.$  

\item
$[\gamma] \neq 1$ ranges over non-trivial closed geodesics.  

\item
$\gamma_0$ is a \emph{primitive} closed geodesic some multiple of which equals $\gamma.$
\end{itemize}
\end{thm}
We prove this result in Appendix \ref{traceformula}. By a limiting argument described in Appendix \ref{limitargument}, Theorem \ref{geometrictraceformulacoexact1formsnew} can be bootstrapped to a larger space of test functions:

\begin{thm} \label{geometrictraceformulallimitnew}
Let $\delta > 5/2.$  Let $H$ be an even, compactly supported $\R$-valued test function satisfying 
\begin{equation*}
\int_{\mathbb{R}} \left(  \left| \widehat{H}(t) \right|^2 + \left|  \widehat{H}'(t) \right|^2 \right) \left( \sqrt{1 + t^2} \right)^{2\delta} < \infty.
\end{equation*}
Then the trace formula from Theorem \ref{geometrictraceformulacoexact1formsnew} is valid for $H.$  
%In particular, if $H$ is 3-times continuously differentiable, then the trace formula from Corollary \ref{geometrictraceformulacoexact1forms} is valid for $H.$
\end{thm}

Our computations in \S \ref{bookersec} use test functions from the larger space in Theorem \ref{geometrictraceformulallimitnew}.
\begin{remark}\label{notcompsupported}
In fact, one can extend the result to include also functions which are not compactly supported, but decay at infinity fast enough (e.g. a Gaussian). This is a delicate extension of Theorem \ref{geometrictraceformulallimitnew}, and will not be needed for the main results in the paper.
\end{remark}

\subsection{Comments for non-experts in the trace formula}
The trace formula is an unconventional tool in Floer homology, and we have therefore crafted our exposition so that in order to understand the basic methods of this paper, one could treat Theorems \ref{geometrictraceformulacoexact1formsnew} and \ref{geometrictraceformulallimitnew} as black boxes. The most important feature to keep in mind is the following: for nice test functions $H$, these formulas express the eigenvalue spectrum for co-exact $1$-forms, sampled using $\widehat{H},$ in terms of explicitly computable geometric quantities sampled using $H$.
\\
\par
As we expect this paper to be mostly interesting for an audience at the intersection of gauge theory and low-dimensional topology, we provide in Appendix \ref{traceformulaintroduction} an introduction to trace formulas in a manner that we hope will be approachable for our readers. The (somewhat technical) proof of Theorem \ref{geometrictraceformulacoexact1formsnew} can be found in Appendix \ref{traceformula}. We put significant effort into making the exposition suitable for a reader with only some basic background on Lie groups and hyperbolic geometry, and who has read through Appendix \ref{traceformulaintroduction}. Furthermore, we provide at each stage motivation and intuition behind the computations we perform.

\subsection{Comments for experts in the trace formula}  
Some complications made it impossible to simply refer to the literature for Theorem \ref{geometrictraceformulacoexact1formsnew}.
\begin{itemize}
\item[(a)]
The literature, e.g. Selberg's original treatments, is strongly biased towards specializations of the trace formula to functions on $\Gamma \backslash G / K$ as opposed to more general functions on $\Gamma \backslash G$ (e.g. differential forms).  

\item[(b)]
Specializations in the literature of the trace formula to hyperbolic 3-manifolds are unfortunately plagued with errors ``in the constants".  Since we calculate the geometric side of the trace formula on a computer, the answers (and their interpretation) would be meaningless if parts of the formula were off by innocent-seeming factors like 2 or $\frac{1}{2\pi}.$    

\item[(c)]
We needed the trace formula in \emph{completely explicit form} to facilitate the computer calculations in \S \ref{computations}, and this required several steps. 
First of all, the Plancherel measure needed to be calculated precisely, as normalized by the standard hyperbolic metric of curvature $-1$. Then, the precise relationship between Casimir eigenvalues and corresponding coexact 1-form eigenvalues for the standard hyperbolic metric needed to be worked out. Finally, there is a contribution from the trivial representation to the spectral side of the trace formula from Theorem \ref{geometrictraceformulacoexact1formsnew}, which surprised us, since our formula is meant to isolate coexact 1-forms.
\end{itemize}

Unfortunately, we found no reference meeting our needs for (a),(b),(c).  In Appendix \ref{traceformula}, we specialize the trace formula to coexact 1-forms on hyperbolic 3-manifolds ourselves, keeping careful track of all constants and checking their consistency with known asymptotic statements that may be derived via the trace formula, e.g. Weyl's law.  If there is any novelty at all in our specialization of the trace formula, it lies in applying the main Theorem of Bouaziz \cite{Bouaziz}; Bouaziz's theorem characterizes, for a semisimple real group $\mathbb{G},$ which functions on the space of (semisimple) conjugacy classes of $G = \mathbb{G}(\mathbb{R})$ may be expressed as orbital integrals of smooth, compactly supported functions on $G.$

\vspace{0.5cm}
\section{Methods for ruling out small eigenvalues}\label{bookersec}

Suppose we have available a trace formula which expresses a spectral sum  
\begin{equation} \label{spectralsum}
\sum_j \widehat{H}(t_j)
\end{equation}
in explicitly computable terms for every nice test function $H.$  For example, the Selberg trace formula for the 0-form spectrum of hyperbolic surfaces (\ref{tracesurface}) has the above form. In that context, $t_j = \sqrt{ \lambda_j - \frac{1}{4}}$ for the eigenvalues of the Laplacian on the hyperbolic surface $\Gamma \backslash \mathbb{H}^2,$ and the trace formula expresses the spectral sum \eqref{spectralsum} in terms of $H$ sampled at lengths of closed geodesics. 
\par
In our case of interest, the trace formula from Theorem \ref{geometrictraceformulacoexact1formsnew} and Theorem \ref{geometrictraceformulallimitnew} for the coexact 1-form spectrum of hyperbolic 3-manifolds has the above form.  In that context, $t_j = \sqrt{\lambda_j^\ast}$ for the eigenvalues $\lambda_j^\ast$ of the Laplacian acting on coexact 1-forms on the hyperbolic 3-manifold $\Gamma \backslash \mathbb{H}^3,$ and the trace formula expresses the spectral sum \eqref{spectralsum} in terms of $H$ sampled at the lengths of closed geodesics (weighted by their holonomy).
Let us denote the list  of $t_j$ (allowing repetition) by $\mathrm{Spec}'$. The following simple observation underlies the most effective method we know for proving that $t \not\in \mathrm{Spec}'$ :

\begin{lemma} \label{exclusioncertificate}
Let $H$ be a nice test function for which the trace formula computing \eqref{spectralsum} applies.  Suppose that $\widehat{H} \geq 0$ and that 
$$\widehat{H}(t) > \sum_j \widehat{H}(t_j).$$
Then $t\not\in \mathrm{Spec}'.$  
\end{lemma}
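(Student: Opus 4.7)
The plan is to argue by direct contradiction, exploiting the nonnegativity hypothesis on $\widehat{H}.$ Suppose, toward a contradiction, that $t = t_{j_0}$ for some index $j_0$ occurring in the spectral sum. Then the term $\widehat{H}(t_{j_0}) = \widehat{H}(t)$ is one of the summands on the right-hand side of
$$\widehat{H}(t) > \sum_j \widehat{H}(t_j).$$

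Since $\widehat{H} \geq 0$ pointwise, every other term $\widehat{H}(t_j)$ with $j \neq j_0$ is nonnegative, so dropping them only decreases the sum:
$$\sum_j \widehat{H}(t_j) \geq \widehat{H}(t_{j_0}) = \widehat{H}(t).$$
This directly contradicts the assumed strict inequality $\widehat{H}(t) > \sum_j \widehat{H}(t_j),$ completing the argument.

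There is essentially no obstacle here: the content of the lemma is a one-line pigeonhole-style observation, and the trace formula hypothesis enters only insofar as it guarantees that the quantity $\sum_j \widehat{H}(t_j)$ is well-defined and computable (which is what makes the inequality $\widehat{H}(t) > \sum_j \widehat{H}(t_j)$ verifiable in practice). The real work of the method — which is not part of this lemma itself — lies in constructing test functions $H$ with $\widehat{H} \geq 0$ whose $\widehat{H}(t)$ is large relative to $\sum_j \widehat{H}(t_j)$ for candidate values $t;$ that construction is what will occupy the subsequent sections.
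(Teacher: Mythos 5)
Your argument is correct and is exactly the paper's proof: if $t$ were some $t_{j_0}$, then $\widehat{H}(t)$ is one summand of the spectral sum, and nonnegativity of $\widehat{H}$ forces $\sum_j \widehat{H}(t_j) \geq \widehat{H}(t)$, contradicting the strict inequality. Nothing further is needed.
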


\begin{proof}
If $t = t_j,$ then $\widehat{H}(t)$ is one summand in the full spectral sum.  Because $\widehat{H} \geq 0,$ it must be less than the full spectral sum.
\end{proof}

Call a test function $H$ \emph{admissible} if $\widehat{H} \geq 0$ and if the trace formula computing \eqref{spectralsum} is valid for the test function $H.$  
Define
\begin{equation} \label{optimizationsolution}
I_{R,t} := \inf_{\substack{\mathrm{supp}(H) \subset [-R,R]\\ \widehat{H} \text{ admissible}\\  \widehat{H}(t) = 1} } \sum_j \widehat{H}(t_j).
\end{equation}
If $I_{R,t} < 1,$ then $t\not\in \mathrm{Spec}'$; a test function which nearly realizes the infimal value $I_{R,t}$ is a witness to that fact that $t$ is not among the $t_j$ by Lemma \ref{exclusioncertificate}.

\vspace{0.3cm}
\subsection{Excluding eigenvalues: the method of Booker and Strombergsson}\label{bookermethod}

While the proofs of the two main results of the paper, Theorems \ref{Thm1} and \ref{Thm2}, both involve proving restrictions on the value of $\sqrt{\lambda_1^\ast(Y)},$ their nature is rather different.  
\begin{itemize}
\item
In Theorem \ref{Thm2}, the entire interest lies in finding a narrow window in $[0,\sqrt{2}]$ in which $\sqrt{\lambda_1^\ast(Y)}$ certifiably lies.   

\item
In Theorem \ref{Thm1}, we need only show that $\sqrt{\lambda_1^\ast(Y)} \notin [0,\sqrt{2}].$  To demonstrate the latter, there is no specific need to find narrow windows in $(\sqrt{2},\infty)$ in which $\lambda_1^\ast(Y)$ certifiably lies.  However, localizing the value of $\sqrt{\lambda_1^\ast(Y)}$ gives independently interesting information about $Y.$  
\end{itemize}

Both problems can be attacked with the method of Booker and Strombergsson \cite{BS}.  But for completeness, we next describe a cruder approach yielding examples for Theorem \ref{Thm1}, i.e $Y$ for which $\sqrt{\lambda_1^\ast(Y)} > \sqrt{2}.$  
\\
\par
To find examples for Theorem \ref{Thm1}, it is natural to apply the trace formula to admissible test functions $H_0$ for which $\widehat{H_0}$ looks like the indicator function $\mathbf{1}_{[-\sqrt{2},\sqrt{2}]}$ (or any $H_0$ for which $\widehat{H}_0$ is large on $[-\sqrt{2},\sqrt{2}]$ and decays quickly to 0); such $H_0$ might allow us to use Lemma \ref{exclusioncertificate} effectively. Regarding the evaluation of $\sum_j \widehat{H_0}(t_j)$ via the trace formula, recall the prime geodesic theorem on closed, hyperbolic 3-manifolds $Y$ \cite{Sarnak}: 
$$\# \{\text{primitive closed geodesics on } Y \text{of length}\leq R    \} \sim \frac{e^{2R}}{2R}.$$ 
To evaluate the test function $H_0$ on that many (complex) lengths and sum them is exponentially difficult in $R.$  For this reason, it is only possible, in practice, to evaluate the spectral side of the trace formula, via the geometric trace formula from Theorem \ref{geometrictraceformulacoexact1formsnew} and Theorem \ref{geometrictraceformulallimitnew} for admissible function $H_0$ supported on $[-R_0,R_0],$ for some relatively small $R_0.$  See \S \ref{computations} for discussion of practical choices for $R_0$. Of course, by the uncertainty principle, restricting the support of $H_0$ makes it difficult to localize $\widehat{H_0}.$
\par
\medskip

We applied the above approach with $H_0(x)=\frac{2}{5}\beta\ast\beta(2x/5)$ where
\begin{equation*}
\beta(x)=
\begin{cases}e^{-1/(1-x^2)}\text{ if }|x|<1\\
0\text{ otherwise,}
\end{cases}
\end{equation*}
is a cutoff function and $\ast$ denotes the convolution. Recall that the convolution of $f$ and $g$ is defined to be
\begin{equation*}
(f\ast g)(x)=\int_{\mathbb{R}}f(t)g(x-t)dt.
\end{equation*}
The key property for our purposes is that the Fourier transform of the convolution is the product of the Fourier transforms, i.e.
\begin{equation*}
\widehat{f\ast g}=\hat{f}\cdot \hat{g},
\end{equation*}
so that in particular the Fourier transform of $H_0$ is a non-negative function. The function $H_0$ is supported in $[-5,5]$, and we accordingly sampled the geodesics in that range. This approach leads to a proof Theorem \ref{Thm1}. This is because for the manifolds in Table \ref{table1} the inequality $\sum_j \widehat{H_0}(t_j)<0.017$ holds, and $\widehat{H_0}(t)\leq0.0176...$ for $t\in[0,\sqrt{2}]$. Furthermore, because smallness of $\sum_j \widehat{H_0}(t_j)$ correlates strongly with largeness of $\sqrt{\lambda_1^\ast}$, the size of the latter spectral sum provides heuristic information about the distribution of $\sqrt{\lambda_1^\ast}$ in our sample of census manifolds; we refer the reader to \S\ref{further}, and in particular Figure \ref{LnonL}, for a more detailed discussion of this.
\begin{comment}
This attempt did not work well in practice.    So this method only stands a chance of excluding $\sqrt{\lambda_1^\ast} \leq \sqrt{2}$ if $\sqrt{\lambda_1^\ast} - \sqrt{2}$ happens to be rather large.  Such large gaps are hard to come by: because the Plancherel measure on $\widehat{G}$ restricted to the $(n = 1)$-component equals $(t^2 + 1) dt,$ we expect roughly 
$$\frac{\vol(Y)}{2\pi} \int_{\sqrt{2}}^{\sqrt{2}+\epsilon} (t^2 + 1)dt = \frac{\vol(Y)}{2\pi} \left( \frac{1}{3} \epsilon^3 + \sqrt{2} \epsilon^2 + 3 \epsilon \right)$$
parameters $t_j$ in the interval from $[\sqrt{2},\sqrt{2}+\epsilon]$ (at least for ``large" $Y$).  And indeed, we only succeeded in showing that $\sqrt{\lambda_1^\ast} > \sqrt{2}$ for the Census manifolds with labels $0,2,15,25,44$ using this method (with $H = $ a dilation of the convolution square of the standard bump function)  \textcolor{red}{Changed $t_1^\ast > 2$ to $t_1^\ast > \sqrt{2}$ above.  Do you remember: for which census manifolds did we succeed in showing that $t_1^\ast > \sqrt{2}$ using the convolution square of a bump function supported on $[-6.5/2, 6.5/2]$?  It's larger than the 5 examples we list above but smaller than our total list.}\textcolor{blue}{I added a file that I wrote time ago with a list of manifolds that the naive method with convolution square bump certifies to have $t>1.6$ (it's most of our list).}
\end{comment}
\\
\par
We emphasize, however, that $I_{R_0,t}$ provides more specific and interesting information about the location of the $t_j$:  
\begin{itemize}
\item $t = \pm t_j$ implies that $I_{R_0,t} \geq 1$
\item
The pointwise limit of $I_{R,t}$ picks out the eigenvalues of the Laplacian on coexact 1-forms.  More precisely,  \begin{comment}\footnote{The $+t_j$ and $-t_j$-eigenspaces of $\ast d$ are lumped together in Equation \eqref{BSlargesupportlimit} because the test functions $H$ we use in our application of the trace formula satisfy $\widehat{H}(+t_j) = \widehat{H}(-t_j).$}\end{comment}
\begin{equation} \label{BSlargesupportlimit}
\lim_{R \to \infty} I_{R,t} = \begin{cases} \dim \left(t_j^2 \text{-eigenspace of the Laplacian on coexact $1$-forms}\right) & \text{ if } t = \pm t_j   \\ 0 & \text{otherwise.}  \end{cases}
\end{equation}
\end{itemize}
In particular, one might hope that if ``$Y$ is small relative to $R = R_0,$" e.g. if $\mathrm{inj}(Y)$ is significantly less than $\frac{1}{2} R,$ the function $t \mapsto I_{R_0,t}$ approximates the characteristic function of $\{\sqrt{\lambda_1^\ast}, \sqrt{\lambda_2^\ast}, \ldots \}$ (allowing repetition). 
Furthermore, $t \mapsto I_{R_0,t}$ potentially does better at excluding eigenvalues, via Lemma \ref{exclusioncertificate}, than any fixed admissible function $H_0$ supported on $[-R_0,R_0]$ because 
$$I_{R_0,t} \leq \frac{\sum \widehat{H_0}(t_j)}{\widehat{H_0}(t)}.$$  
We do not know how to compute the function $t \mapsto I_{R_0,t}$ for any $R_0$ on any hyperbolic 3-manifold $Y.$  However, the method of Booker and Strombergsson \cite{BS} finds an upper bound $J_{R_0,t} \geq I_{R_0,t}$ which is explicitly computable via the trace formula.  They applied their method to exclude eigenvalues on (congruence arithmetic) hyperbolic surfaces less than $\frac{1}{4},$ but their method is equally applicable whenever a trace formula is available in the sense of \eqref{spectralsum}.  Their method runs as follows:

\begin{enumerate}
\item Let $h_0,\ldots, h_n$ be even, $\R$-valued functions on $\R$ supported in $[-\frac{R_0}{2}, \frac{R_0}{2}]$ for which $S := \{  h \ast h: h = \sum_{i=0}^n x_i h_i  \text{ for } x_i \in \mathbb{R} \}$ consists entirely of admissible functions for the trace formula \eqref{spectralsum}.  Define 
\begin{align*}
J_{R_0,t} &:= \inf_{H = h \ast h \in S, \widehat{h}(t) = 1} \sum_j \widehat{H}(t_j) \\
&= \inf_{\sum x_i \widehat{h_i}(t) = 1} \sum_j \left( \sum_{i = 0}^n x_i \widehat{h_i}(t)\right)^2 \\
&= \inf_{\sum x_i \widehat{h_i}(t) = 1} \sum_{a,b = 0}^n x_a x_b \sum_j \widehat{h_a}(t_j) \widehat{h_b}(t_j) \\
&= \inf_{ \langle c_t, x \rangle = 1 } \langle Ax,x \rangle,  
\end{align*}
where $\langle \cdot, \cdot \rangle$ denotes the standard dot product on $\mathbb{R}^n, A$ is the matrix with entries
$$A_{a,b} := \sum_j \widehat{h_a \ast h_b} (t_j),$$
and $c_t$ is the vector
$$c_t = \begin{bmatrix} \widehat{h_1}(t) \\ \vdots \\ \widehat{h_n}(t) \end{bmatrix}.$$
\item Clearly we have
$$I_{R_0,t} \leq J_{R_0,t},$$
because $I$ is the infimum of the same quantity over a larger space of functions. 
\medskip

\item $J_{R_0,t}$ is explicitly computable.  It is the minimum of a (positive definite) quadratic form on $\R^n$ subject to a linear constraint.  We calculate by Lagrange multipliers: 
$$J_{R_0,t} = \frac{1}{\langle A^{-1} c_t,c_t \rangle}.$$
The matrix $A$ is explicitly computable using the trace formula \eqref{spectralsum}. 
\end{enumerate}

\vspace{0.5cm}
\section{Computations} \label{computations}

\subsection{Main computation} \label{maincomputation}
We discuss the proof of Theorem \ref{Thm1}, our main result.  We restricted our investigation to manifolds $Y$ from the Hodgson-Weeks census with labels from 0, \ldots, 49.  The Hodgson-Weeks census consists of $11,031$ closed, orientable hyperbolic 3-manifolds which is a good approximation to the (finite) set of hyperbolic three-manifolds with volume at most $6.5$ and injectivity radius at least $0.15.$  Volume increases with the census label, and the first manifold in the list (the Weeks manifold) is known to be the compact hyperbolic three-manifold with the least volume.  Census manifolds include many of the least complex closed, hyperbolic 3-manifolds, and those census manifolds with labels 0, \ldots, 49 are among the least complex of them.  For reasons we will explain in \S \ref{further}, the smallest coexact 1-form eigenvalue $\lambda_1^\ast(Y)$ tends to be small for complex $Y.$  So, we limited our search for $\lambda_1^\ast(Y) > 2$ to the simplest $Y$ we could find.  \emph{Our computations make essential use of the 3-manifold software SnapPy developed by Culler, Dunfield, and Goerner.}  \medskip

We applied the method of \S \ref{bookermethod} to the trace formula from Theorem \ref{geometrictraceformulacoexact1formsnew} (more precisely: the slightly broader version from Theorem \ref{geometrictraceformulallimitnew}).  More specifically, we used the same shape of test functions as in \cite{BS} with the following parameters (notation from \S \ref{bookermethod}):
 
\begin{itemize}
\item
$h := \left( \frac{1}{2\delta} \mathbf{1}_{[-\delta,\delta]} \right)^{\ast 2}.$ 
\item
$h_k(x) := \frac{1}{2} \left( h(x - k\delta) + h(x + k\delta) \right).$  For these choices,
$$\widehat{\sum x_k h_k}(t) = \left( \frac{\sin (\delta t)}{\delta t} \right)^2 \sum x_k \cos(k \delta t).$$ 
\item
$\delta$ satisfying $\delta \cdot (2n+4) \leq R.$
\item
$n = 19.$
\end{itemize}

It is straightforward to check that the functions $h_a \ast h_b$ satisfy the smoothness hypothesis of Theorem \ref{geometrictraceformulallimitnew} and hence are admissible for the trace formula therein.  The test function $h_{x_0,\ldots,x_n} = \left( \sum x_k h_k \right)^{\ast 2}$ is supported on $[-(2n+4)\delta,(2n+4) \delta].$  Hence, the constraint $\delta \cdot (2n+4) \leq R$ guarantees that every $h_{x_0,\ldots,x_n}$ is supported on $[-R,R].$  \medskip

For the closed, hyperbolic 3-manifold $Y,$ the only way we know to compute the matrix $A$ from \S \ref{bookermethod} is to compute $\vol(Y)$ and the full complex length spectrum of $Y$ up to real length $R$ and sample these complex lengths via the test functions $h_a \ast h_b, 0 \leq a,b \leq n+1,$ per the geometric side of the trace formula from Theorem \ref{geometrictraceformulallimitnew}, to recover the spectral side.  Conveniently, SnapPy has built-in functions in the main class Manifold to compute the volume and the complex length spectrum up to a specified real length cutoff. \medskip

\subsubsection{The particular choice of test functions $h_k$}
The test functions to which we apply the trace formula are linear combinations of shifts, by integer multiples of $\delta$ of the convolution fourth power $H := \left( \frac{1}{2\delta} \mathbf{1}_{[-\delta,\delta]} \right)^{\ast 4}.$  Functions of this type are convenient for several reasons:
\begin{itemize}
\item
The function $H$ is a bump function centered at 0 and supported on $[-4\delta,4\delta].$ 

\item
For $\delta$ small, any nice function supported on $[-R,R]$ can be well approximated by such a linear combination of shifts.  Indeed:
\begin{itemize}
\item
The function $H$ is a good approximation to $\delta_0,$ the delta distribution supported at 0.  Thus, for nice functions $f,$ $H \ast f$ is a good approximation to $f.$    

\medskip

\item
Replacing $f$ by a step function $f_{\mathrm{step}}$ ‘interpolating $f$’ with step length $\delta,$ the function
$H \ast f_{\mathrm{step}}$ approximates $H \ast f$ well.  But the function $H \ast f_{\mathrm{step}}$ is a linear combination of shifts, by multiples of $\delta,$ of the function $H.$  
\end{itemize} 
\medskip

As such, one might hope to find a linear combination of shifts of $H$ (by integer multiples of $\delta$) which approximates well the solution to the optimization problem defining $I_{R,t}$ in \eqref{optimizationsolution}, whose domain is the entire space of admissible test functions supported on $[-R,R].$
\medskip

\item
$H$ is an explicit piecewise polynomial function.  So for every value $\ell,$ linear combinations of shifts of $H$ evaluated at $\ell$ are rapidly and accurately computable.  
\medskip

\item
$\widehat{H}(t) = \left(\frac{\sin(\delta t)}{\delta t} \right)^4.$  Shifting by $k \delta$ multiplies the Fourier transform by $e^{ik \delta \cdot t}.$  So the Fourier transforms of linear combinations of shifts of $H$ are rapidly and accurately computable. 
\end{itemize}

One drawback to these test functions, however, is that they are not $C^\infty.$  It is for this reason that we did the extra work to prove Theorem \ref{geometrictraceformulallimitnew}.

\subsubsection{Choosing $R$} \label{choosingR}
Heuristically, we expect 
  
\begin{align*}
H_t &:= h \left( = \sum x_k h_k \right) \text{ minimizing } \sum \widehat{h \ast h}(t_n) \text{ subject to } \widehat{h}(t) = 1 \\
&\approx  \inf_{\text{admissible } H, H(t) = 1, \mathrm{supp}(H) \subset [-R,R]} \sum \widehat{H}(t_n) \\
&=: J_{R,t}.
\end{align*}

Evidently, $J_{R,t}$ decreases with $R.$  So in principle, one would obtain the most useful information by taking $R$ as large as possible.  However, enumerating the complex length spectrum up to real length $R$ is prohibitively computationally intensive even for moderately large $R.$  Indeed, it is known that the number of primitive geodesics of length at most $R$ is approximately $e^{2R}/2R$ \cite{Sarnak}; in practice, the time needed to compute the spectrum seems to be around $Ce^{6R}$ (see Table \ref{timegrowth}).  For practical purposes, $R = 6.5$ seemed to be a reasonable cutoff. For most of the manifolds we tested, this computation took between 20 and 30 minutes  (even though in some special cases, including those of Table \ref{timegrowth}, it took much longer), and we expect the computation for $R=7$ to typically take about a couple of hours.  Of course, this time constraint limits the applicability of our method.

\begin{table}[h!]
 \begin{tabular}{|c| c| c|} 
 \hline
 Cutoff & Census $0$ & Census $1$ \\ 
 \hline\hline
 4.0 & 0.07 & 0.06  \\ 
 \hline
 4.5 & 0.45 & 0.32   \\
 \hline
 5.0 & 4.66  & 3.40 \\
\hline
 5.5 & 86.79 &  62.32 \\
\hline
 6.0 & 1290.23 & 1127.64  \\
\hline
 
\end{tabular}
\caption{The time (in seconds) needed to compute the spectrum at cutoff $R$ for the manifolds Census $0$ and $1$ (on an 3.1 GHz Intel Core i5).}
\label{timegrowth}
\end{table}

\subsubsection{Choosing $n$} \label{choosingn}
For the particular choice of test functions $h_k$ in the previous section, the functions $h_a \ast h_b$ we will use satisfy 
$$(h_a \ast h_b)(x) =  \frac{1}{4} \sum_{\lambda,\mu \in \{ \pm 1 \}} (h \ast h)(x + (\lambda a + \mu b)\delta).$$
To compute $A,$ we calculated the geometric side of the trace formula for the $\sim \mathrm{constant} \cdot n$ different test functions $h \ast h (x + k \delta), k = -2n,\ldots,2n.$  Computing the matrix $A^{-1}$ then requires inverting the $(n+1) \times (n+1)$-matrix $A.$  \medskip

To balance information gained with computational complexity, the specific choice $n = 19$ suited our purposes.

\vspace{0.3cm}
\subsection{Proofs of Theorem \ref{Thm1} and \ref{Thm2}.}
We present the results of our main computation. We computed function $J_{R_0,t},$ described in general terms in \S \ref{bookermethod}, relative to the functions $h,h_k$ described in \S \ref{maincomputation} and the parameters $(R_0,n,\delta) = (6.5,19, \frac{6.5}{2 \cdot 19 + 4})$ explained in \S \ref{choosingR}, \ref{choosingn}.  Recall that if $t^2$ is an eigenvalue of the Laplacian acting on coexact 1-forms on $Y,$ then $J_{R_0,t}(Y) \geq 1.$  
\vspace{0.3cm}
\subsubsection{Proof of Theorem \ref{Thm1} (Examples of hyperbolic, minimal $L$-spaces)}
\begin{proof}
In Table \ref{table3}, we record 
$$\mathrm{PossibleSmallSpectrum}(Y) := \{ t \in [0,4]: J_{R_0,t}(Y) \geq 1 \}$$ 
for several small census manifolds.

For all $Y$ listed in Table \ref{table3}, $\mathrm{PossibleSmallSpectrum}(Y)$ is disjoint from $[0,\sqrt{2}].$  If $\sqrt{\lambda_1^\ast(Y)}$ lies in $[0,\sqrt{2}]$ at all, then it necessarily lies in $\mathrm{PossibleSmallSpectrum}(Y).$  Because \newline $\mathrm{PossibleSmallSpectrum}(Y)$ is disjoint from $[0,\sqrt{2}]$ for all $Y$ tabulated above, it follows that $\sqrt{\lambda_1^\ast(Y)} > \sqrt{2}$ for all $Y$ from Table \ref{table3}. 
\end{proof}

\begin{remark}
For every entry in the above table, it is in principle possible that $\sqrt{\lambda_1^\ast}$ does not belong to $\mathrm{PossibleSmallSpectrum}(Y)$, which would mean that $\sqrt{\lambda_1^\ast} > 4.$  The following heuristic approach suggests that this cannot be the case. We applied the trace formula to test functions of the shape
\begin{equation*}
H_a = \left( \frac{d^2}{dx^2} + a^2 \right) \cdot e^{-x^2/2}
\end{equation*}
for various $0 < a < 4$ (see Remark \ref{notcompsupported}).  The Fourier transform $\widehat{H}$ is a constant multiple of $\left( -t^2 + a^2 \right) e^{-t^2/2}$ and hence is positive if $|t| \leq a$ and negative otherwise.  In particular, if $\sum  \widehat{H_a}(t_n) > 0,$ then $\sqrt{\lambda_1^\ast} \leq a.$ For various $a = a(Y),$ chosen near troughs of the graph of $J_{R_0, t}(Y),$ the approximate value of $\sum_n \widehat{H_a}(t_n),$ as computed via the trace formula from Theorem \ref{geometrictraceformulallimitnew} truncated at $R_0 = 6.5,$ was ``quite positive".  One could estimate the size of the tail (beyond our cutoff $R_0 = 6.5$) to rigorously prove positivity, but we will not attempt to do so here.    
\end{remark}

\begin{table}[h!]
 \begin{tabular}{|c| c| c| c|} 
 \hline
 Census label & Volume & Injectivity radius & $\mathrm{PossibleSmallSpectrum}(Y)$ \\ 
 \hline\hline
 0 & 0.94270 \ldots &  0.29230 \ldots & $[2.962, 3.124]$ \\ 
 \hline
 2 & 1.01494 \ldots & 0.41572 \ldots & $[3.086,3.302] \cup [3.977,4]$ \\
 \hline
 3 & 1.26371 \ldots & 0.28753 \ldots & $[2.145, 2.222] \cup [3.617,4]$ \\
 \hline
 8 & 1.42361 \ldots  & 0.17618 \ldots & $[2.031, 2.263] \cup [3.234,4]$\\
 \hline
 12 & 1.54356\ldots & 0.16768\ldots & $[1.658, 1.686] \cup [2.478,2.778] \cup [3.720,4]$ \\
 \hline
 13 & 1.54356\ldots& 0.28903\ldots & $[1.520,1.672] \cup [2.108,2.213] \cup [3.140,4]$ \\
 \hline
14 & 1.58316 \ldots & 0.27889 \ldots & $[2.018,4]$ \\
 \hline
15 & 1.58316 \ldots &  0.38874 \ldots & $[2.396,2.595] \cup [3.248,4]$ \\
 \hline
16 & 1.58864\dots & 0.26727\ldots & $[1.809, 1.847] \cup [2.519, 3.013] \cup [3.221,4]$ \\
\hline
22 & 1.83193\ldots & 0.26532 \ldots & $[1.680,1.721] \cup [2.48,4]$ \\
\hline
25 & 1.83193 \ldots &  0.26531 \ldots & $[2.323, 2.597] \cup [3.283,4]$\\
\hline
28 & 1.88541\ldots& 0.29230\ldots & $[1.659, 1.689] \cup [2.543,4]$\\
\hline
29 & 1.88541\ldots & 0.19853\ldots & $[1.540,1.934] \cup [2.247, 3.554] \cup [3.951,4]$ \\
\hline
30 & 1.88541\ldots& 0.19853\ldots & $[1.541,1.704] \cup [2.156,4]$\\     
\hline
31 & 1.88541 \ldots & 0.29230 \ldots & $[2.172,3.015] \cup [3.864,4]$ \\
\hline
32 & 1.88591\ldots & 0.20593\ldots & $[1.740,1.794] \cup [2.491,4]$\\
\hline
33 & 1.91084\ldots & 0.22107\ldots & $[1.710,1.799] \cup [2.214, 2.731] \cup [3.012,4]$ \\
 \hline
39 & 1.96273 \ldots & 0.21576 \ldots & $[2.108,2.780] \cup [3.061,4]$\\ 
\hline
40 & 1.96274\ldots& 0.28904\ldots & $[1.842,1.855] \cup [2.829,3.365] \cup [3.634,4]$\\
\hline
42 & 2.02395\ldots& 0.17922\ldots & $[1.779,4]$\\
\hline
44 & 2.02988 \ldots & 0.43127 \ldots &$[2.717,4]$\\
\hline
46 & 2.02988\ldots& 0.27177\ldots & $[1.992,4]$\\
\hline
49 &2.02988\ldots& 0.21564\ldots & $[1.681,1.894] \cup [2.681,4]$\\
\hline
\end{tabular}
\caption{The hyperbolic manifolds of Theorem \ref{Thm1}.}
\label{table3}
\end{table}

\subsubsection{Proof of Theorem \ref{Thm2} (Narrow $\lambda_1^\ast$-intervals for non-L-spaces)}
\begin{proof}
In Table \ref{table4}, we record the same information as in Table \ref{table3} for some small census manifolds previously proven to be non-$L$-spaces. In particular, Dunfield has determined exactly which manifolds in the Hodgson-Weeks census are $L$-spaces in the setting of Heegaard Floer homology; in his approach, many of the spaces in the census are shown to be $L$-spaces via surgery exact triangles, using the fact that they are obtained by Dehn filling on cusped manifolds which admit lens space fillings. More generally, most spaces in the census arise as branched double covers of links in $S^3$, hence their Floer homology can be computed using software developed in the setting of bordered Heegaard Floer homology \cite{Zhan}. Via the isomorphism proved in \cite{KLT}, \cite{CGH}, and the subsequent papers, this also provides a list of which manifolds in the Hodgson-Weeks census are $L$-spaces in the setting of monopole Floer homology.
\begin{table}[h!]
 \begin{tabular}{|c| c| c| c| } 
 \hline
 Census label  & Volume & Injectivity radius & $\mathrm{PossibleSmallSpectrum}(Y)$ \\ 
 \hline\hline
 1 &  0.98136 \ldots & 0.28904 \ldots & $[0.580,0.583] \cup [3.163,4]$ \\ 
 \hline
 4 &  1.28448 \ldots & 0.24015 \ldots & $[0.784,0.804] \cup [2.220,3.403] \cup [3.964,4]$ \\
 \hline
 6  &  1.41406 \ldots & 0.39706 \ldots & $[0.765,0.776] \cup [2.305,3.383]$\\
\hline
 7 &  1.41406 \ldots & 0.18244 \ldots & $[0.528,0.532] \cup [3.346,4]$\\
\hline
9 &1.44069 \ldots & 0.36152 \ldots &  $[0.660,0.988] \cup [3.348, 4]$\\
\hline 
 19 &  1.75712 \ldots & 0.35268 \ldots & $[0.826,0.908] \cup [1.987,4]$\\
\hline
 23 & 1.83193 \ldots & 0.24060 \ldots & $[0.709,0.718] \cup [2.391,2.797] \cup [3.045,4]$\\
\hline
 24 &  1.83193 \ldots & 0.26531 \ldots & $[0.561,0.566] \cup [3.043,4]$\\
\hline
34  &  1.91221 \ldots & 0.24958 \ldots & $[0.036,0.074] \cup [3.049,4]$\\
\hline
45  &  2.02988 \ldots & 0.27176 \ldots & $[0.777,0.878] \cup [1.925,4]$\\
\hline
47  &  2.02988 \ldots & 0.21563 \ldots & $[0.608,0.617] \cup [2.637,4]$\\
\hline
48  &  2.02988 \ldots & 0.27176 \ldots & $[0.534,0.539] \cup [3.121,4]$\\
\hline
\end{tabular}
\caption{Bounds for $\sqrt{\lambda_1^\ast(Y)}$ for the hyperbolic manifolds $Y$ of Theorem \ref{Thm2}.}
\label{table4}
\end{table}

\medskip

As for non $L$-spaces, the Seiberg-Witten equations admit irreducible solutions, so Theorem \ref{Thm3} implies that $\sqrt{\lambda_1^\ast(Y)} \leq \sqrt{2}$ for every non-$L$-space $Y.$  Thus,
$$\sqrt{\lambda_1^\ast(Y)} \in [0,\sqrt{2}] \cap \mathrm{PossibleSmallSpectrum}(Y) \text{ for every non-L-space } Y.$$
In particular, per Table \ref{table4}, $\sqrt{\lambda_1^\ast(\mathrm{Census}_1)}$ belongs to $[0.580,0.583]$, and the analogous conclusion holds for the other entries.
\end{proof}

\vspace{0.3cm}
\subsection{Pictures bounding $\mathrm{PossibleSmallSpectrum}(Y)$}
Recall that $J_{R_0,t}(Y)$ is designed to approximate
$$\lim_{R \to \infty} I_{R,t} = \begin{cases} \dim \left(t_j^2 \text{-eigenspace of the Laplacian on coexact $1$-forms}\right) & \text{ if } t = \pm t_j \\ 0 & \text{otherwise.}  \end{cases}$$
(see \S \ref{bookermethod} for further discussion).  This bears out in pictures.  We include pictures of the graphs of $t \mapsto J_{R_0,t}(\mathrm{Census}_i)$ for $i = 0,1,2.$     

\begin{figure}
  \includegraphics[width=0.8\linewidth]{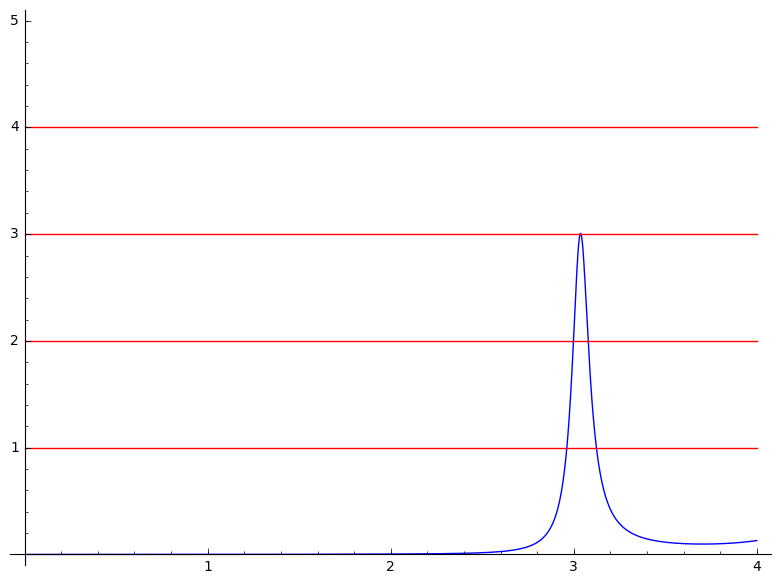}
  \caption{The graph of $t \mapsto J_{R_0,t}(\mathrm{Census}_0)$ for $t \in [0,4].$}
  \label{booker0graph}
\end{figure}

\begin{figure}
  \includegraphics[width=0.8\linewidth]{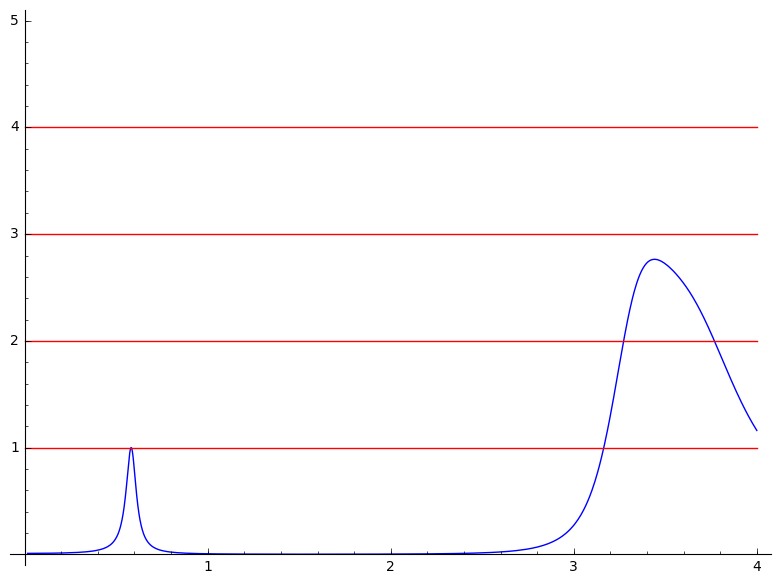}
  \caption{The graph of $t \mapsto J_{R_0,t}(\mathrm{Census}_1)$ for $t \in [0,4].$}
  \label{booker1graph}
\end{figure}

\begin{figure}
  \includegraphics[width=0.8\linewidth]{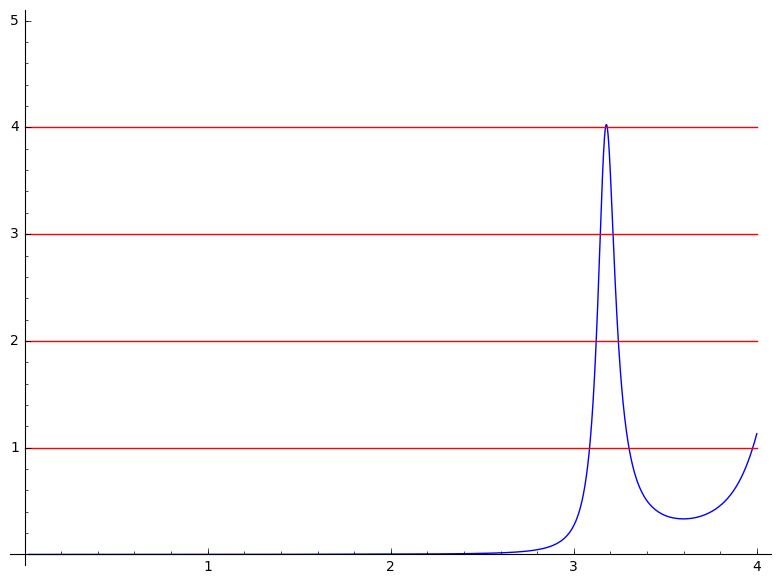}
  \caption{The graph of $t \mapsto J_{R_0,t}(\mathrm{Census}_2)$ for $t \in [0,4].$}
  \label{booker2graph}
\end{figure}

\medskip

In all three pictures, we expect the first peak of the graph to occur near 
$$(\sqrt{\lambda_1^\ast}, \text{dimension of the } \lambda_1^\ast \text{-eigenspace for coexact 1-forms}).$$ 
Indeed, the fact that the vertical coordinate just barely exceeds a positive integer is a non-trivial check on our computations.  To compute the intervals from $\mathrm{PossibleSmallSpectrum}(Y),$ we solved for $J_t(Y) = 1$ (up to tolerance $10^{-6}$) via bisection. \medskip

When the graph of $J_t(Y)$ is peaked just barely above vertical coordinate $m$ for some integer $m > 1,$ the eigenvalue windows are likely much narrower than we claim.  For example, note that  
$$\{ t \in [0,4]: J_t(\mathrm{Census}_0) \geq 3\} \subset [3.036,3.040].$$
So if the $\lambda_1^\ast$-eigenspace for Census 0 really is 3-dimensional, as Figure \ref{booker0graph} suggests, then $\sqrt{\lambda_1^\ast(\mathrm{Census}_0)} \in [3.036,3.040].$  Likewise, if the $\lambda_1^\ast$-eigenspace for Census 2 is actually $4$-dimensional as Figure \ref{booker2graph} suggests, then $\sqrt{\lambda_1^\ast(\mathrm{Census}_2)} \subset [3.177,3.183].$

\begin{remark}
The trace formula is unable to distinguish between two parameters $t_n, t_{n+1}$ which are very close versus equal on the nose.  We do not know, in general or even in the particular examples of Census 0 and Census 2, how to compute the multiplicity of an eigenvalue having multiplicity greater than 1.
\end{remark}

\vspace{0.5cm}
\section{Limitations and further directions}\label{further}
Even though our results can be seen a first step toward understanding the relation between Floer homology and hyperbolic geometry in dimension three, our approach has some significant limitations; we now discuss these and also some natural questions and problems these lead to.
\subsection{Regarding the structure of the collection of hyperbolic minimal $L$-spaces}
While our test was successful when studying small manifolds in the census, it can be seen that as the volume grows, the proportion of manifolds with $\lambda_1^*\leq2$ increases. This should be contrasted with the computations of Dunfield \cite{Dun}, which imply that a very large part of the manifolds in the census are $L$-space. This observation leads the obvious question of whether there are infinitely many manifolds with $\lambda_1^*>2$, or the following more general question:

\begin{quest}
Fix $\epsilon > 0.$  Does the set 
$$S_\epsilon = \{ \text{closed, hyperbolic } Y: H^\ast(Y, \mathbb{Q}) = H^\ast(S^3, \mathbb{Q}) \text{ and } \lambda_1^\ast > \epsilon \}$$ 
have any discernable structure?  In particular, is $S_\epsilon$ always a finite set?  
\end{quest}

While we do not have a completely satisfactory answer to the above question, there are some clear restrictions on the local geometry of the elements in $S_{\epsilon}$. The discussion which follows is inspired by the work of McGowan \cite{McG} (which in fact provides more refined estimates regarding the number of small eigenvalues, provided upper bounds on the volume). \bigskip

\par
Recall that a \textit{hyperbolic tube} $T$ with complex length $\ell e^{i\vartheta_0}$ is obtained by quotienting the cylinder
\begin{equation*}
\{(r,t,\vartheta) \lvert 0\leq r\leq R, 0\leq T\leq \ell,\theta\in S^1\}
\end{equation*}
equipped with the hyperbolic metric
\begin{equation*}
dr^2+\mathrm{cosh}^2r dt^2+\mathrm{sinh}^2r d\vartheta^2
\end{equation*}
via the identification
\begin{equation*}
(r,0,\vartheta)\sim (r,\ell,\vartheta+\vartheta_0).
\end{equation*}
We refer to $R$ as the radius of the tube. The subset $r=0$ is a geodesic called the core geodesic.
Consider now on a tube $T$ of radius $R$ a $1$-form of type $\alpha=f(r)dt$. A form of this kind is always coclosed. Furthermore, we have
\begin{equation*}
d\alpha=f'(r)dr\wedge dt.
\end{equation*}
Now, $|dr\wedge dt|=1/\mathrm{cosh}(r)$. Choosing $f$ to be a standard pyramid shaped function on $[0,R]$, we see that the Rayleigh quotient of $\alpha$ is approximatively
\begin{equation}\label{rayleigh}
\frac{\int_T|d\alpha|^2}{\int_T|\alpha|^2}\approx \frac{\int_0^R |f'|^2dr}{\int_0^R|f|^2dr},
\end{equation}
which converges to zero for $R$ going to infinity. Hence, given $\epsilon>0$, there is a universal upper bound of the diameter of a tube $T\subset Y$ for a hyperbolic rational homology sphere with $\lambda_1^*>\epsilon$. Using this, we have the following.

\begin{prop}
Let $Y$ be a hyperbolic 3-manifold.  There exists $R, \delta >0$ satisfying:
\begin{itemize}
\item
if $Y$ contains an embedded ball of radius $R \geq R_0,$ then $\lambda_1^\ast(Y) \leq \epsilon.$
\item
if $\mathrm{inj}(Y) < \delta,$ then $\lambda_1^\ast(Y) \leq \epsilon.$
\end{itemize}
In particular, $S_{\epsilon}$ is contained in the set of all $Y$ for which the local injectivity radius function has range contained in $[\delta, R].$ 
\end{prop}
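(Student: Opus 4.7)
The plan is to establish each bullet by constructing, in its respective setting, a smooth, compactly supported, coclosed 1-form $\alpha$ on $Y$ with Rayleigh quotient $\|d\alpha\|^2 / \|\alpha\|^2 < \epsilon,$ so that the variational characterization of $\lambda_1^\ast$ yields $\lambda_1^\ast(Y) \leq \epsilon.$  On a rational homology sphere (the case of interest for $S_\epsilon$), harmonic 1-forms vanish, so every coclosed 1-form is coexact and the Rayleigh quotient indeed controls $\lambda_1^\ast$ directly.

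For the first bullet, suppose $Y$ contains an embedded hyperbolic ball $B$ of radius $R$ centered at a point $p.$  Fix a geodesic segment $\gamma \subset B$ through $p$ of length $L = 2R/3$ and set $r_0 = R/3$; the metric tubular neighborhood of $\gamma$ of radius $r_0$ sits inside $B$ and admits the hyperbolic cylindrical model $(r,t,\theta)$ with metric $dr^2 + \cosh^2(r)\, dt^2 + \sinh^2(r)\, d\theta^2.$  I would then consider the 1-form
\begin{equation*}
\alpha := g(r)\, h(t)\, d\theta,
\end{equation*}
where $g$ is a smooth bump supported in $[r_0/2, r_0]$ and $h$ a smooth bump supported in $(-L/2, L/2).$  A short computation in the orthonormal coframe $(dr, \cosh r\, dt, \sinh r\, d\theta)$ shows that $\alpha$ is coclosed and that
\begin{equation*}
\frac{\|d\alpha\|^2}{\|\alpha\|^2} = \frac{\int g'(r)^2 \coth(r)\, dr}{\int g(r)^2 \coth(r)\, dr} + \frac{\int g(r)^2/(\cosh r \cdot \sinh r)\, dr}{\int g(r)^2 \coth(r)\, dr} \cdot \frac{\int h'(t)^2\, dt}{\int h(t)^2\, dt}.
\end{equation*}
Since $\coth(r) \to 1$ on the support of $g$ while $1/(\cosh r \cdot \sinh r)$ decays exponentially there, taking $g,h$ to be bumps of widths proportional to $R$ makes the first summand $O(1/R^2)$ and the second exponentially small in $R.$  Choosing $R = R(\epsilon)$ large enough drives the total quotient below $\epsilon.$

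For the second bullet, the main input is the Margulis lemma for hyperbolic three-manifolds: there is a universal constant $\mu_3 > 0$ such that if $\mathrm{inj}(Y) < \mu_3/2,$ every sufficiently thin point of $Y$ lies in an embedded Margulis tube around a closed geodesic $\gamma_0 \subset Y$ of length $\ell \leq 2\,\mathrm{inj}(Y).$  Classical quantitative tube estimates (Meyerhoff, Brooks--Matelski) give a lower bound $R(\ell)$ on the embedded tubular radius with $R(\ell) \to \infty$ as $\ell \to 0.$  This Margulis tube is isometric to a hyperbolic tube in the sense described before the proposition, so the 1-form $\alpha = f(r)\, dt$ from equation~(\ref{rayleigh}), with $f$ a bump of width comparable to $R(\ell),$ yields a coclosed 1-form on $Y$ with Rayleigh quotient $O(1/R(\ell)^2).$  Consequently, once $\mathrm{inj}(Y) < \delta(\epsilon)$ for an appropriate $\delta(\epsilon) > 0,$ one has $\lambda_1^\ast(Y) \leq \epsilon.$

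The main obstacle is an effective (rather than merely qualitative) form of the Margulis lemma: one needs a quantitative growth rate for $R(\ell)$ as $\ell \to 0$ to extract an explicit $\delta(\epsilon).$  This is classical but must be cited carefully.  Granted that, the two Rayleigh-quotient computations reduce to elementary calculations in cylindrical coordinates on $\mathbb{H}^3,$ and the stated conclusion that the range of the local injectivity radius function of any $Y \in S_\epsilon$ lies in $[\delta, R]$ follows immediately by contraposition.
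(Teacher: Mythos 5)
Your proof is correct, and for the thin-part bullet it is the paper's argument: the paper invokes Theorem 3.2 of Gabai--Meyerhoff--Milley (a closed geodesic of length $\ell$ is the core of an embedded tube of radius $r(\ell)\to\infty$ as $\ell\to 0$), uses that the injectivity radius is half the length of the shortest closed geodesic, and applies the tube test form $f(r)\,dt$; your Margulis/Meyerhoff/Brooks--Matelski input is the same statement, and since the proposition only asserts the existence of some $\delta>0$, the qualitative divergence $r(\ell)\to\infty$ already suffices, so the effective rate you flag as the ``main obstacle'' is not actually needed. Where you genuinely deviate is the large-ball bullet: the paper parametrizes the ball in geodesic polar coordinates $(0,R)\times S^2$ with metric $dr^2+\sinh^2(r)\,g_{S^2}$ and uses test forms $g(r)\beta$ for a fixed $1$-form $\beta$ on $S^2$, reducing the Rayleigh quotient to a one-variable quotient in $g$, whereas you inscribe a Fermi coordinate box around a geodesic segment through the center and recycle the cylindrical ansatz $g(r)h(t)\,d\theta$. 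Your computation checks out: the form is coclosed, the box of core length $2R/3$ and radius $R/3$ lies in the ball by the triangle inequality, the quotient is
\[
\frac{\int g'^2\coth r\,dr}{\int g^2\coth r\,dr}+\frac{\int g^2/(\cosh r\,\sinh r)\,dr}{\int g^2\coth r\,dr}\cdot\frac{\int h'^2\,dt}{\int h^2\,dt},
\]
and the two terms are $O(R^{-2})$ and exponentially small on the support of $g$, respectively. Your route costs a two-variable cutoff but avoids any discussion of forms on $S^2$; the paper's is shorter but otherwise plays the same role. Finally, your explicit remark that on a rational homology sphere every coclosed $1$-form is coexact, so that the Rayleigh quotient genuinely bounds $\lambda_1^\ast$, is a point where you are more careful than the paper: its statement is for a general hyperbolic $Y$, but when $b_1(Y)>0$ one would additionally have to control the harmonic part of the test form, which is exactly the caveat you record.
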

\begin{proof}
For the lower bound, we invoke \cite[Theorem 3.2]{GMM1} which says: if there is an embedded geodesic $\gamma$ of real length $\ell$, then $\gamma$ is the core of an embedded tube with radius $r(\ell)$ with $r(\ell)\rightarrow \infty$ as $\ell \rightarrow 0$.  Because $\lambda_1^\ast > \epsilon$ imposes a universal upper bound on the diameter of an embedded tube, the latter implies that $\ell$ is bounded below. Thus, the injectivity radius, which equals half the length of the shortest closed geodesic (\cite{Mar}, Proposition $4.3.2$), must be bounded below too.
\par
To see that there is an upper bound on the local injectivity radius, parametrize the hyperbolic ball of radius $R$ as $(0,R)\times S^2$ equipped with the metric $dr^2+\mathrm{sinh}^2(r)g_{S^2}$, where $g_{S^2}$ is the metric on the unit sphere in $\mathbb{R}^3$. Consider then for a fixed non-zero coclosed $1$-form $\beta$ on $S^2$ the forms of the type $g(r)\beta$. This is a coclosed form, and a computation analogous to (\ref{rayleigh}) shows that its Rayleigh quotient only depends on the Rayleigh quotient of $g$. In particular, when $R$ goes to infinity, this can be made to go to zero.
\end{proof}

\begin{cor}For every $\epsilon,V>0$ there exists only finitely many hyperbolic three-manifolds with $\lambda_1^*>\epsilon$ and $\mathrm{vol}<V$.
\end{cor}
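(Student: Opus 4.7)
The plan is to deduce the corollary directly from the preceding proposition combined with a classical finiteness theorem for hyperbolic three-manifolds with bounded geometry. First, I would invoke the proposition: the hypothesis $\lambda_1^\ast(Y) > \epsilon$ yields a universal lower bound $\mathrm{inj}(Y) \geq \delta = \delta(\epsilon) > 0$ on the injectivity radius. Thus every $Y$ in the set under consideration belongs to the class
\[
\mathcal{M}(\delta, V) := \{ \text{closed hyperbolic three-manifolds } Y : \mathrm{inj}(Y) \geq \delta, \; \mathrm{vol}(Y) \leq V \}.
\]
It therefore suffices to show that $\mathcal{M}(\delta, V)$ is finite.

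For the finiteness of $\mathcal{M}(\delta, V)$, I would use a standard net-and-nerve argument. Choose a maximal $\delta/2$-separated subset $F \subset Y$. Since the balls $B_{\delta/4}(x)$ for $x \in F$ are pairwise disjoint and embedded (as $\delta/4 < \mathrm{inj}(Y)$), the cardinality $|F|$ is bounded above by $V / v(\delta/4)$, where $v(r)$ denotes the volume of a hyperbolic ball of radius $r$. By maximality, the balls $B_{\delta/2}(x)$ for $x \in F$ cover $Y$, and since each of these balls is geodesically convex, all nonempty multiple intersections are convex and hence contractible. Consequently the nerve $\mathcal{N}$ of this cover is a simplicial complex homotopy equivalent to $Y$, in particular $\pi_1(\mathcal{N}) \cong \pi_1(Y)$. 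Moreover, the combinatorics of $\mathcal{N}$ are universally controlled: the number of vertices is at most $V / v(\delta/4)$, and the valence of each vertex is bounded by the maximum number of $\delta/2$-separated points one can fit in a ball of radius $\delta$ in $\mathbb{H}^3$, a quantity depending only on $\delta$. Therefore only finitely many nerves $\mathcal{N}$ can arise, and in particular only finitely many groups $\pi_1(Y)$ occur up to isomorphism.

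Finally, Mostow rigidity implies that two closed hyperbolic three-manifolds with isomorphic fundamental groups are isometric, so $\mathcal{M}(\delta, V)$ is a finite set, completing the proof. The main (mild) subtlety is merely bookkeeping in the nerve argument, specifically verifying that balls of radius $\delta/2$ are geodesically convex in a manifold of injectivity radius $\geq \delta$ so that the nerve is genuinely a good cover; this is standard. Alternatively, one could replace the entire second step by an appeal to the Gromov-Thurston compactness theorem for pointed hyperbolic manifolds with bounded geometry combined with Mostow rigidity, but the explicit net argument has the virtue of making the finiteness constant depend effectively on $\epsilon$ and $V$.
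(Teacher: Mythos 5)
Your proof is correct, and its first step is exactly the paper's: apply the preceding proposition to convert $\lambda_1^\ast>\epsilon$ into a uniform lower bound $\mathrm{inj}(Y)\geq\delta(\epsilon)$, reducing the corollary to the finiteness of closed hyperbolic three-manifolds with $\mathrm{vol}\leq V$ and $\mathrm{inj}\geq\delta$. The difference is in how that finiteness is handled: the paper simply cites the classical Jorgensen--Thurston/Gromov result, whereas you prove it from scratch via a maximal $\delta/2$-net, the nerve theorem for the resulting good cover, a count of the possible nerves, and Mostow rigidity. Your route is self-contained and, as you note, makes the finiteness effective in $\epsilon$ and $V$; the paper's citation is shorter and avoids the one technical point your argument has to address, namely convexity of the covering balls. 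On that point, recall that in nonpositive curvature the convexity radius is only guaranteed to be at least half the injectivity radius, so balls of radius exactly $\delta/2$ in a manifold with $\mathrm{inj}\geq\delta$ sit at the borderline; the standard fix is to take a maximal $\delta/3$-separated set (so the covering balls have radius $\delta/3<\delta/2$ and are convex, intersections of convex sets being convex and hence contractible), after which the vertex count $\leq V/v(\delta/3)$ already bounds the number of possible nerves, and hence of fundamental groups, with the valence bound not strictly needed. With that adjustment the argument is complete, and Mostow rigidity finishes it exactly as you say.
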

\begin{proof}
This follows directly from the previous proposition combined with the fact that there are only finitely many manifolds with volume bounded above and injectivity radius bounded below \cite{Gro}.  
\end{proof}

One is then led to ask where do the limitations of our approach stem from. Aside from the applicability of the Booker-Strombergsson method to provide effective computations of $\lambda_1^*$, the main problem is that the bound we are using, i.e. $\lambda_1^\ast \leq 2$ when the Seiberg-Witten equations admit irreducible solutions, is rather crude. In particular:
\begin{itemize}
\item it does not use the hyperbolic metric in an essential way. In fact, Theorem \ref{spectralnew} shows that $\lambda_1^\ast(Y) \leq 2$ provided $Y$ is a Riemannian $3$-manifold for which the Seiberg-Witten equations on $Y$ admit irreducible solutions and that $\tilde{s}(Y) = -4$;
\item more importantly, in the proof of Theorem \ref{spectralnew}, we use the estimate $||d\xi||^2_{L^2}\geq \lambda_1^*\|\xi\|^2_{L^2}$. While this holds for \textit{any} coclosed $1$-form $\xi$ on $Y$, one could expect that a sharper estimate holds when $\xi$ arises from a solution to the Seiberg-Witten equations.
\end{itemize}

For example, we just saw that the smallness of $\lambda_1^*$ for manifolds with large embedded balls or short geodesics is caused by $1$-forms of a special kind; it would be interesting to understand if forms of small Rayleigh quotient on a tube or a ball can arise from the solutions to the Seiberg-Witten equations.
More generally, we have the following.
\begin{quest}
Suppose $Y$ is a closed, hyperbolic rational homology sphere.  Can one improve upon the upper bound $\lambda_1^\ast \leq 2$, which holds for all Riemannian 3-manifolds $Y$ satisfying $\widetilde{s}(Y) = -4$, using explicit and computable geometric data arising from the hyperbolic geometry of $Y$ (e.g. the injectivity radius)?  
\end{quest}

In fact, even though our methods are conclusive only in some examples, there seems to be an intriguing correlation between the size of $\lambda_1^*$ and the property of being $L$-spaces (see Figure \ref{LnonL}). A better understanding of this experimental observation could lead to interesting geometric characterizations of hyperbolic $L$-spaces in terms of explicit quantities of interest in hyperbolic geometry.
\begin{figure}
\includegraphics[width=0.8\linewidth]{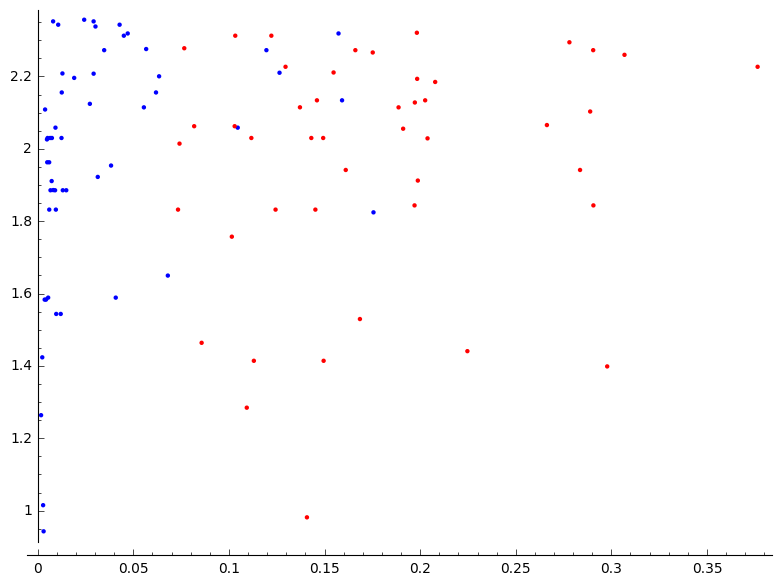}
\caption{We have plotted, among the first $100$ manifolds in the Hodgson-Weeks census, the $L$-spaces in blue and the non-$L$-spaces in red. The $y$-axis records the volume, while the $x$-axis records the value of the spectral sum $\sum \hat{H}(t_j)$ obtained by using $H_0(x)=\frac{2}{5}\beta\ast\beta(2x/5)$ where $\beta(x)=e^{-1/(1-x^2)}$ is a cutoff function (see the discussion of the naive attempt in \S\ref{bookermethod}). The function $H_0$ is supported in $[-5,5]$, and we accordingly need as input the length spectrum with cutoff $R=5$. Heuristically, the graph should be interpreted as follows: a low value of the spectral sum suggests a big value for $\lambda_1^*$; in particular, the manifolds with spectral sum $<0.017$ have $\lambda_1^*>2$.}
  \label{LnonL}
\end{figure}

\vspace{0.3cm}
Regarding the limitations of our methods, the following is also a natural question.
\begin{quest} \label{minimalLspacequestion}
Is there an $L$-space $Y$ which is not a minimal $L$-space? In other words, is there an $L$-space $Y$, such that for each choice of metric, the Seiberg-Witten equations admit irreducible solutions?
\end{quest}

By contrast, the construction of \cite{Fro} shows that there is always a metric for which the equations admit irreducible solutions.

\subsection{Comparison to notions of minimality in other Floer homology theories}
In a different direction, one could try to compare our notion of minimal $L$-space with the analogous ones of \textit{strong $L$-space} in the setting of Heegaard Floer homology \cite{GreLev} and the one of \textit{$\mathrm{SU}_2$-cyclic manifold} in instanton theory (see for example \cite{SivZen} and references therein). These are roughly speaking the spaces for which the relevant Floer chain complex is as simple as possible. As our understanding of minimal $L$-spaces is too limited to even formulate reasonable questions about the relationships between these notions (see Question \ref{minimalLspacequestion}), we will focus here on pointing out some interesting examples.

\subsubsection{Comparison to strong $L$-spaces for Heegaard Floer homology}
It is shown in \cite{Gre} that the branched double cover $\Sigma(L)$ over a non-split alternating link is a strong $L$-space. In \cite{GreLev}, the authors ask whether every strong $L$-space arises in this manner, and they provide evidence towards a positive answer. By contrast, the Weeks manifold, which we have shown to be a minimal $L$-space in Theorem \ref{Thm1}, is not the branched double cover over any alternating knot (as it follows via geometrization from \cite{MedVes}). While the Conjecture in \cite{GreLev} would suggest that the Weeks manifold is not a strong $L$-space, whether this is actually the case is currently an open problem.
\\
\par
Conversely, it is easy to find examples of alternating knots whose branched double cover is hyperbolic and for which our methods strongly suggest that $\lambda_1^*\leq2$ (e.g. $10_{108}$); in particular we cannot determine whether these examples are minimal or not. Nevertheless, browsing through small crossing alternating knots, one can find several examples for which $\lambda_1^*>2$ can be proved using our methods at a cutoff $R=6.5$. For example, the double branched covers of the alternating knots $9_{40}$, $10_{100}$, $10_{102}$, $10_{103}$, $10_{104}$ and $10_{109}$ all satisfy $\lambda_1^\ast > 2$ and are not among the examples covered already in Theorem \ref{Thm1} (as they have larger volume). 
\medskip

As the topology and geometry of an alternating knot is deeply connected with the combinatorics of its alternating diagram (see for example \cite{Lac}) we ask the following.
\begin{quest}
Suppose $L$ is an alternating link in $S^3$ for which the branched double cover is a hyperbolic rational homology sphere. Can one provide explicit lower bounds on $\lambda_1^*$ of $\Sigma(L)$ in terms of an alternating diagram of $L$?
\end{quest}

\subsubsection{Comparison to $\mathrm{SU}_2$-cyclic manifolds for instanton Floer homology}
Regarding the class of $\mathrm{SU}_2$-cyclic manifolds, our understanding is even more limited. Let us point out that the hyperbolic manifolds $\Sigma(8_{18})$ (which is the example labeled $44$ in Table \ref{table1}) and $\Sigma(10_{109})$, which were shown to be $\mathrm{SU}_2$-cyclic in \cite{SivZen}, can be shown to have $\lambda_1^*>2$ using our methods, and are therefore minimal $L$-spaces. 
\medskip

In a different direction, the Weeks manifold $Y$ (which we have shown to be a minimal $L$-space) does admit a non-cyclic (indeed, faithful) $\mathrm{SU}_2$-representation. This fact is well-known to experts, and can be seen directly from the arithmetic description of $\iota:\pi_1(Y){\hookrightarrow} \mathrm{PSL}_2(\mathbb{C})$ given in Section $9.8.2$ of \cite{MacReid} as follows (we refer the reader to Section $8.2$ of \cite{MacReid} for the relevant notions). The inclusion $\iota$ is defined over a cubic field with exactly one complex place; by taking its Galois conjugate corresponding to the real place (at which the corresponding quaternion algebra is ramified), we obtain a new embedding $\pi_1(Y){\hookrightarrow} \mathrm{PSU}_2=\mathrm{SO}_3$. As $H^2(Y,\mathbb{Z}/2\mathbb{Z})=0$, this embedding lifts to an embedding into $\mathrm{SU}_2.$

\subsection{$L$-spaces, integer homology spheres, and a conjecture of  Ozsv\'ath and Szab\'o}
An intriguing conjecture of Ozsv\'ath and Szab\'o states that the only irreducible $L$-spaces which are integral homology spheres are $S^3$ and the Poincar\'e homology sphere. In particular, their conjecture predicts that no hyperbolic integer homology sphere is an $L$-space. While Dunfield has already determined that none of the $150$ integral homology spheres in the Hodgson-Weeks census in an $L$-space, it is still interesting to look at these examples from our perspective. Referring to Figure \ref{LnonL}, the four integral homology spheres (which have census label $5$, $34$, $77$ and $79$) all have spectral sum $>0.25$, which is very large compared to the manifolds in our sample. The case of Census $34$ (see
Figure \ref{booker34graph}) is emblematic: our computations (see Table \ref{table2}) imply that 
\begin{equation*}
0.001<\lambda_1^*<0.005.
\end{equation*}
This value is \textit{extremely small} compared to the other manifolds with similar volume. In light of the conjecture of Ozsv\'ath and Szab\'o, we ask the following.
\begin{quest}
For hyperbolic integer homology spheres $Y,$ what are the best upper bounds one can prove for $\lambda_1^\ast(Y)$ in terms of the hyperbolic geometry of $Y$?  In particular, is it always true that $\lambda_1^\ast(Y) \leq 2$?
\end{quest}

\begin{figure}
  \includegraphics[width=0.8\linewidth]{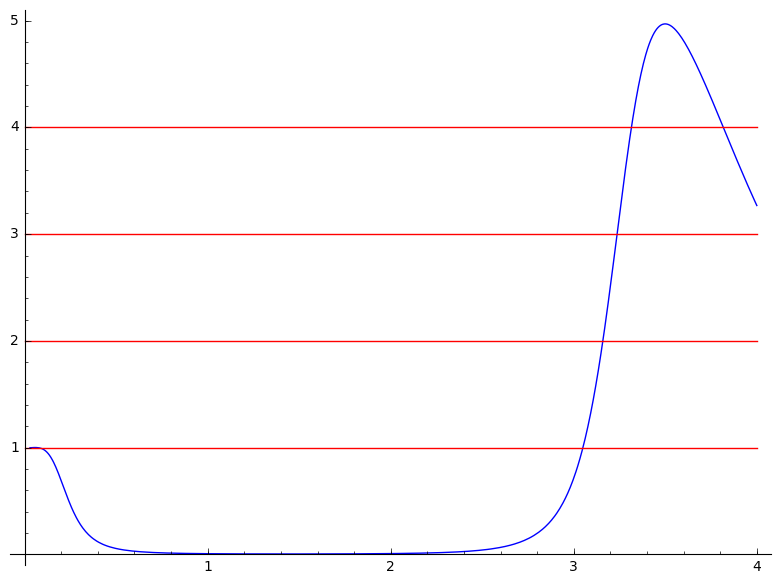}
  \caption{The graph of $t \mapsto J_{R_0,t}(\mathrm{Census}_{34})$ for $t \in [0,4].$}
  \label{booker34graph}
\end{figure}
%\textcolor{green}{What do you believe is the answer to Question \ref{minimalLspacequestion}? Non-vanishing homology of the HM-complex prevents a space from being a minimal L-space.  But does the complex (well-defined up to some equivalence (stronger than chain homotopy, right?)) carry any interesting non-vanishing ``higher order obstructions" that would prove that the complex isn't equivalent to the 0-complex?}\textcolor{blue}{I think in principle one could try to define some algebraic torsion invariants a la Reidemeister in order to provide obstructions - if such an obstruction could be useful at all is not clear to me though!}.
%
%
%\textcolor{green}{Just for my own enlightenment: What is the nature of the equivalence between monopole Floer homology and Heegaard Floer homology?  Is it an equivalence of complexes in some strong sense (beyond just an abstract isomorphism between the homologies of the two complexes)?  Does minimality of an L-space translate into any nice condition on the Heegaard-Floer complex?}\textcolor{blue}{The isomorphism is constructed via zigzags of quasi-isomorphism; it is not clear at all if one can construct a natural direct chain equivalence between the two. In fact, the chain complexes look completely different from an algebraic point of view: in Heegaard Floer, the underlying module is always a direct sum of $\ztwo[U]$s, while this is almost never the case in monopole Floer.}

\subsection{Isospectrality. }A direct consequence of the Selberg trace formula (when applied to suitable test functions), is that two hyperbolic three-manifolds with the same complex length spectrum have the same eigenvalues of the Laplacian on coexact $1$-forms (and the same volume and $b_1$). A pair of such manifolds is called \textit{length isospectral}; we refer the reader to Section $12.4$ of \cite{MacReid} for a detailed discussion and for some explicit examples. The following is an intriguing question.
\begin{quest}\label{questoiso}Is there a pair of length isospectral hyperbolic rational homology three-spheres for which $\lambda_1^*>2$?
\end{quest}
Unfortunately, all known examples of length isospectral hyperbolic three-manifolds have volume significantly larger than the ones we have considered in this paper \cite{LinVoi}. On the other hand, a positive answer to the question would provide us a pair of non-homeomorphic hyperbolic three-manifolds which, from our perspective, are $L$-spaces for the same geometric reason.
\par
In fact, our techniques produce the same bounds on $\lambda_1^*$ for manifolds with the same volume and complex length spectrum up to a constant $R$, provided we restrict ourselves to test functions $H$ supported in $[-R,R]$. Examples of such pairs of manifolds, where the cutoff $R$ is comparable to the volume and for which admit many geodesics with length at most $R$, are constructed in \cite{FutMil}, and it would be also interesting to find a pairs where both manifolds have $\lambda_1^*>2$ in this more general context.

\vspace{0.5cm}
\begin{appendix}

\section{An introduction to Selberg trace formulas} \label{traceformulaintroduction}

Our goal is to provide an introduction to the main ideas behind the Selberg trace formula, crafted for someone working at the intersection of gauge theory and low-dimensional topology. In particular, we assume only some basic familiarity with the heat kernel on compact manifolds and the trace of operators, as in Chapters $7$ and $8$ of \cite{Roe}.
\begin{remark}\label{convergence}
To streamline the exposition, we will not concern ourselves in Appendices \ref{traceformulaintroduction} and \ref{traceformula} with technical aspects involving convergence and smoothness issues. Of course, we \emph{do} concern ourselves with these issues in Appendix \ref{limitargument}, where we generalize the results to certain classes of non-smooth functions.
\end{remark}
\subsection{The basic idea}
The various incarnations of the Selberg trace formula are obtained by computing the trace of certain convolution operators in two ways, one involving spectral data and the other involving geometric data. In fact, such formulas are far reaching generalizations of the well-known fact that the trace of a matrix can be computed both as the sum of its eigenvalues and as the sum of its diagonal entries.

\subsection{The trace of the heat kernel} \label{heatkernel} Let us first discuss a very specific instance of the trace formula for surfaces (\ref{tracesurface}), due to McKean \cite{McKean}. Given $X=\Gamma \backslash \mathbb{H}^2$ a compact hyperbolic surface, denote by $0=\lambda_0<\lambda_1\leq\lambda_2\leq\dots$ the eigenvalues of the Laplacian on functions. Then, for any $t\in\mathbb{R}^{>0}$, the identity
\begin{equation}\label{mckean}
\sum_{n=0}^{\infty} e^{-\lambda_nt}=\frac{\mathrm{vol}(X)}{4\pi} e^{-t/4} \int_{-\infty}^{\infty} r e^{-t r^2} \tanh(\pi r) \; dr + \frac{1}{(4\pi t)^{1/2}}  e^{-t/4} \cdot \sum_{\gamma \neq 1}\frac{\ell(\gamma_0)}{\mathrm{sinh}(\ell(\gamma)/2)}e^{-\ell^2(\gamma)/4t}
\end{equation}
holds, where we follow the notation specified in the paragraph below Equation \eqref{tracesurface}. In fact, this follows from Equation \eqref{tracesurface} (which also holds for non-compactly supported functions that decay fast enough at infinity) by taking $g(x) = e^{-t/4} \cdot \frac{1}{\sqrt{4\pi t}} \cdot e^{-x^2 / 4t}$, for which $\widehat{g}(r)=e^{-t(r^2+1/4)}.$
\par
The identity \eqref{mckean} is obtained by equating two different computations of the trace of the heat kernel on $X$. We discuss the main points behind the proof, and refer the reader to Chapters $7$ and $9$ of  \cite{Bus} for more details.

\bigskip

Recall that the solution $f_t$ at time $t$ of the heat equation
\begin{align*}
\frac{d}{ds}f(s,y)+\Delta f(s,y)&=0\\
f(0,y)&=f_0(y)
\end{align*}
on $X$ is obtained by taking the convolution of $f_0$ with the heat kernel $K_t(x,y)$ on $X$, i.e.
\begin{equation*}
f_t(y)=\int_X f_{{0}}(x)K_t(y,x) dx.
\end{equation*}
Furthermore, the map $e^{-t\Delta}$ from $L^2(X)$ to itself sending $f_0$ to $f_t$ is trace-class, and we have
\begin{equation*}
\mathrm{trace} (e^{-t\Delta})=\int_X K_t(x,x) dx.
\end{equation*}
We now discuss two explicit expressions for the heat kernel $K_t(x,y)$, which will allow us to compute this quantity in two different ways. 
\vspace{0.3cm}
\subsubsection{Spectral computation of the heat trace} First of all, if $\{\phi_n\}$ denotes an $L^2$-orthonormal basis of eigenvectors for $\Delta$ (where $\phi_n$ has eigenvalue $\lambda_n$), we have
\begin{equation*}
K_t(x,y)=\sum e^{-\lambda_n t}\phi_n(x)\overline{\phi_n(y)},
\end{equation*}
and we can explicitly compute
\begin{align*}
\mathrm{trace}(e^{-t\Delta}) &= \int_X K_t(x,x) dx \\
&=\int_X\sum e^{-\lambda_t}| \phi_n(x) |^2dx \\
&=\sum e^{-\lambda_n t}
\end{align*}
as $\int_X | \phi_n(x)|^2 dx = 1$ for every $n.$ We therefore obtain the left hand side of Equation \eqref{mckean}.  We regard the latter expression for the trace of the heat operator as the ``spectral side" of McKean's formula, which loosely corresponds to computing the trace of a matrix by summing its eigenvalues. 
\vspace{0.3cm}
\subsubsection{Geometric computation of the heat trace}\label{geomheat}Let $k_t(\cdot,\cdot)$ be the heat kernel on the hyperbolic plane $\mathbb{H}^2$. An explicit expression for $k_t(\cdot,\cdot)$ can be found in \cite[Section $7.4$]{Bus}, but for our introductory discussion all we need is the fact that it only depends on the distance between the two points, i.e. it has the form
\begin{equation*}
k_t(\cdot,\cdot)=\tilde{k}_t(d(\cdot,\cdot))
\end{equation*}
where $d(\cdot,\cdot)$ denotes the hyperbolic distance. This follows directly from the fact that the isometry group of $\mathbb{H}^2$ acts transitively on the set of pairs of points with a given distance.
\par
The identity
\begin{equation}\label{kernelexpr2}
K_t(x,y)=\sum_{\gamma\in\Gamma} k_t(\tilde{x},\gamma\cdot\tilde{y})
\end{equation}
holds, where $\tilde{x},\tilde{y}$ are any preimages of $x,y$ in $\mathbb{H}^2$ \cite[Section $7.5$]{Bus}. Here, the right hand side is manifestly bi-invariant for the action of $\Gamma$, and therefore descends to $X\times X$. Then (\ref{kernelexpr2}) readily follows from the characterization \cite[Chapter 7]{Roe} of $K_t(x,y)$ as the unique (sufficiently smooth) time dependent function on $X\times X$ satisfying the properties
\begin{align*}
\left( \frac{d}{dt}+\Delta_x \right)K_t(x,y)&=0\\
K_t(x,y)&\rightarrow \delta(x,y)\text{ as }t\rightarrow 0.
\end{align*}
These can be checked directly using the fact that for all $\gamma$, $k_t(\tilde{x},\gamma\cdot \tilde{y})$ satisfies
\begin{align*}
\left( \frac{d}{dt}+\Delta_{\tilde{x}} \right)k_t(\tilde{x},\gamma\cdot \tilde{y})&=0\\
k_t(\tilde{x}, \gamma\cdot\tilde{y})&\rightarrow \delta(\tilde{x},\gamma\cdot \tilde{y})\text{ as }t\rightarrow 0,
\end{align*}
because $k_t$ is the heat kernel on $\mathbb{H}^2$. We therefore also have
\begin{align*}
\mathrm{trace}(e^{-t\Delta}) &=\int_X K_t(x,x) dx \\
&=\sum_{\gamma\in\Gamma} \int_F k_t(\tilde{x},\gamma\cdot\tilde{x})d\tilde{x}
\end{align*}
where $F$ is a fundamental domain for the action of $\Gamma$ on $\mathbb{H}^2$. We are left to compute the above integrals. The $\gamma=1$ term corresponds to
\begin{align*}
\int_F{k}_t(\tilde{x},\tilde{x})d\tilde{x} &= \int_F\tilde{k}_t(0)d\tilde{x} \\
&=\tilde{k}_t(0)\cdot \mathrm{vol}(X),
\end{align*}
which corresponds to the first term in the right hand side of \eqref{mckean} after taking into account the explicit expression of the heat kernel. Regarding the other terms, let us set $\{\Gamma\}'$ to be the set of non-trivial conjugacy classes of $\Gamma$; this set is in bijection with the set of closed geodesics on $X$, because each free homotopy class of loops contains exactly one geodesic representative \cite[Section $4.1.5$]{Mar}. Denoting by $\Gamma_{\gamma}$ the centralizer of $\gamma$ in $\Gamma$, we have
\begin{align*}
\sum_{\gamma\in\Gamma\setminus \{1\}} \int_F k_t(\tilde{x},\gamma\cdot\tilde{x})d\tilde{x}&=\sum_{[\gamma]\in\{\Gamma\}'}\sum_{\delta\in \Gamma_\gamma\setminus\Gamma} \int_F k_t(\tilde{x},\delta^{-1}\gamma\delta \cdot\tilde{x})d\tilde{x} \\
&=\sum_{[\gamma]\in\{\Gamma\}'}\sum_{\delta\in \Gamma_\gamma\setminus\Gamma} \int_F k_t(\delta\tilde{x},\gamma\delta \cdot\tilde{x})d\tilde{x}\\
&=\sum_{[\gamma]\in\{\Gamma\}'}\sum_{\delta\in \Gamma_\gamma\setminus\Gamma} \int_{\delta F} k_t(\tilde{x},\gamma \cdot\tilde{x})d\tilde{x} \\
&=\sum_{[\gamma]\in\{\Gamma\}'}\int_{F'_{\gamma}} k_t(\tilde{x},\gamma \cdot\tilde{x})d\tilde{x}
\end{align*}
where $F'_{\gamma}=\bigcup_{\delta\in \Gamma_{\gamma}\setminus\Gamma} \delta F $ is a fundamental domain for $\Gamma_{\gamma}$.
\par
The key observation here is that to compute each integral we can choose any fundamental domain for $\Gamma_{\gamma}$. In our case, $\Gamma$ is a torsion-free cocompact lattice in $\mathrm{Isom}^+(\mathbb{H}^2)=\mathrm{PSL}(2,\mathbb{R})$, and therefore each non-trivial element is hyperbolic. Writing $\gamma=\gamma_0^n$ for a primitive geodesic $[\gamma_0]$ and $n\in \mathbb{N}$, we have that $\Gamma_{\gamma}$ is the infinite cyclic group generated by $\gamma_0$, cf. \cite[Lemma $4.2.2$]{Mar}. Working in the upper half plane model, after conjugation we can assume $\gamma_0$ is the hyperbolic element corresponding to the dilation
\begin{align*}
\gamma_0: \mathbb{H}^2\rightarrow \mathbb{H}^2\\
z\mapsto\lambda z
\end{align*}
with $\lambda=e^{\ell(\gamma_0)}$. Here, using coordinates $z=x+iy$, we can take the strip $1<y<\lambda$ as the fundamental domain for $\Gamma_{\gamma}$. Therefore, we have
\begin{equation}\label{geomint}
\int_{F'_\gamma} k_t(\tilde{x},\gamma \cdot\tilde{x})d\tilde{x}=\int_1^{\lambda}\int_{-\infty}^{+\infty} \tilde{k}_t(d(z,\lambda^n z))\frac{dxdy}{y^2}.
\end{equation}
Such an integral clearly only depends on $\lambda$ and $n$. To compute it, notice that for fixed $x,y$, the map $w\mapsto (w-x)/y$ is an isometry sending $z$ to $i$ and $\lambda^nz$ to $b+i\lambda^n$, where $b=(\lambda^n-1)x/y$. A simple substitution in the integral above then shows that
\begin{equation*}
\int_{F'_\gamma} k_t(\tilde{x},\gamma \cdot\tilde{x})d\tilde{x}=\frac{\log\lambda}{\lambda^n-1}\int_{-\infty}^{\infty} \tilde{k}_t(d(i,b+i\lambda^n))db.
\end{equation*} 
Following \cite[Section $9.2$]{Bus}, the last integral is essentially the \textit{Abel transform} of $\tilde{k}_t$ \cite[Section 7.3]{Bus}: this is a classical integral transform taking as input a radial function (such as $\tilde{k}_t(d(i,\cdot))=k_t(i,\cdot)$) and converting it into a function which is constant on each horocycle $y=\text{const}$ (in our case $y=\lambda^n=e^{\ell(\gamma)}$) by suitably integrating it over it. The key point is that the Abel transform admits an \textit{explicit} inverse transform. This allows to calculate the desired integral using the \textit{explicit} form of the heat kernel $\tilde{k}_t$. In fact, the determination of $\tilde{k}_t$ also relies on the (inverse) Abel transform, \cite[Section $7.4$]{Bus}.
\par
While we will not pursue the complete computation here, and refer again the reader to Chapter $7$ and $9$ of \cite{Bus} for details, the final answer gives us the corresponding term in the sum on the right hand side of \eqref{mckean}. We regard this second determination of the trace of the heat operator as the ``geometric side" of McKean's formula, which loosely corresponds to computing the trace of a matrix by summing its diagonal entries.   

\vspace{0.3cm}

\subsection{The representation theoretic generalization} \label{traceformulareptheory}
In our discussion of McKean's formula, we observed: equating two expressions for the trace of convolution with the heat kernel on a compact hyperbolic surface leads to a deep relationship between its hyperbolic geometry and its spectral geometry. This paradigm can be greatly generalized as follows (see Section $3$ of \cite{White}). Consider a Lie group $G$ with Haar measure $dg$ and a discrete cocompact subgroup $\Gamma\subset G$. Denote by $K\subset G$ a maximal compact subgroup. In the concrete situation of a hyperbolic surface $X$, $G= \mathrm{PSL}_2(\mathbb{R})$, $\Gamma=\pi_1(X)$ is cocompact and torsion-free, and $K=\mathrm{PSO}_2 \subset \mathrm{PSL}_2(\mathbb{R})$. Note that the hyperbolic plane arises as the locally symmetric space $\mathbb{H}^2=G/K$, where we use the upper half plane model for $\mathbb{H}^2$, $G$ acts via linear fractional transformations, and $K$ is the stabilizer of $i$. Furthermore, we can identify $X=\Gamma\setminus G/K$.
\begin{remark}\label{unimodular}
For the purposes of this section, following \cite{White}, we will make the simplifying (but inessential, cf. Remark \ref{notuni} below) assumption that the group $G$ and the relevant subgroups we consider are \textit{unimodular}, i.e. left and right Haar measures coincide. Recall in general that one can define the \textit{modular function} of a Lie group $G$ as
\begin{equation}\label{modfun}
\Delta_G(g)=|\det \mathrm{Ad}(g)|.
\end{equation}
This function measures the failure of a right Haar measure $d_r(\cdot)$ to be left invariant, i.e. 
\begin{equation*}
d_r(g\cdot)=\Delta(g)d_r(\cdot),
\end{equation*}
and therefore a group $G$ is unimodular iff its modular function is identically $1$. See \cite[Chapter VIII.2]{Knapp} for basic facts about modular functions and unimodular groups. In the present section we only need the fact that semisimple (e.g. $\mathrm{PSL}_2(\mathbb{R})$), abelian and discrete groups are unimodular \cite[Corollary 8.31]{Knapp}. The relevance of this notion is the following. Let $H$ be a closed unimodular subgroup of a unimodular group $G$ with Haar measure $dg$. Then for any Haar measure $dh$ on $H$, there is a unique $G$-invariant measure $dx$ on $H\backslash G$ such that for all $f\in C^{\infty}_c$ the identity
\begin{equation*}
\int_G f(g)dg=\int_{H\backslash G}\left(\int_H f(hx)dh\right)dx
\end{equation*}
holds, see \cite[Theorem 8.36]{Knapp}. The measure $dx$ depends on $dh$; we will denote it by $\frac{dg}{dh}$, and write
\begin{equation*}
\int_G f(g)dg=\int_{H\backslash G}\left(\int_H f(hg)dh\right)\frac{dg}{dh}.
\end{equation*}
Similarly, if $K$ is a closed unimodular subgroup of $H$ with Haar measure $dk$, the identity
\begin{equation*}
\int_{K\setminus G} f(g)\frac{dg}{dk}=\int_{H\backslash G}\left(\int_{K\backslash H} f(hg)\frac{dh}{dk}\right)\frac{dg}{dh}
\end{equation*}
holds. In this section we will always work with $K$ a discrete subgroup, and choose $dk$ to be the counting measure. In this case for simplicity we will denote $\frac{dg}{dk}$ and $\frac{dh}{dk}$ by $dg$ and $dh$ respectively.
\end{remark}

Fix a Haar measure $dg$ on $G$, and consider the Hilbert space $L^2(\Gamma \backslash G)$. Notice that in this new setup the functions $L^2(X)$ on $X=\Gamma\setminus G/K$ correspond to the functions $L^2(\Gamma \backslash G)$ which are invariant under the action of $K$ by right translation. Given $f \in C^{\infty}_c(G)$, we can use it to define the (right) convolution operator $R(fdg)$ whose value on $\phi\in L^2(\Gamma \backslash G)$ is given by
\begin{equation*}
(R(fdg)\phi)(h)=\int_G f(g)\phi(hg)dg.
\end{equation*}

Again, we equate two expressions for the trace of such convolution operator $R(fdg)$; these loosely correspond to expressing its trace, on the one hand, by summing its diagonal matrix entries (the ``geometric side") and expressing it, on the other hand, as the sum of its eigenvalues (the ``spectral side") . 

\bigskip

Regarding the geometric side, notice that
\begin{align*}
(R(fdg)\phi)(h) &= \int_G f(g)\phi(hg)dg \\
&=\int_G f(h^{-1}g)\phi(g)dg \\
&= \int_{\Gamma \backslash G}\sum_{\gamma\in \Gamma}f(h^{-1}\gamma g)\phi(\gamma g)dg \\
&=\int_{\Gamma \backslash G} \left( \sum_{\gamma\in \Gamma}f(h^{-1}\gamma g) \right) \phi(g)dg
\end{align*}
so that $R(f)$ is an integral operator on $\Gamma\setminus G$ with kernel $K_f(h,g)=\sum_{\gamma\in\Gamma} f(h^{-1}\gamma g),$ cf. Equation (\ref{kernelexpr2}). We can then compute the trace in terms of geometric data as outlined in the previous paragraph. In this more general setup, if we set $\{\Gamma\}$ to be the set of \textit{all} conjugacy classes of $\Gamma$, and still denote the centralizer of $\gamma\in \Gamma$ by $\Gamma_{\gamma}$, we get
\begin{align*}
\mathrm{trace} \; R(fdg) &=\int_{\Gamma\setminus G}K_f(g,g) \; dg \\
&=\int_{\Gamma\setminus G} \sum_{\gamma\in\Gamma} f(g^{-1}\gamma g) \; dg\\
&=\int_{\Gamma\setminus G} \sum_{\gamma\in\{\Gamma\}}\sum_{\delta\in\Gamma_{\gamma}\setminus\Gamma} f(g^{-1}\delta^{-1}\gamma \delta g) \;dg\\
&= \sum_{[\gamma]\in\{\Gamma\}}\int_{\Gamma\setminus G}\sum_{\delta\in\Gamma_{\gamma}\setminus\Gamma} f(g^{-1}\delta^{-1}\gamma \delta g) \; dg\\
&=\sum_{[\gamma]\in\{\Gamma\}}\int_{\Gamma_{\gamma}\setminus G} f(g^{-1}\gamma g) \; dg.
\end{align*}
For any $\gamma\in\Gamma$, we denote its centralizer in $G$ by $G_{\gamma}$, and assume that it is unimodular. After choosing a Haar measure $dg_{\gamma}$ on $G_{\gamma}$, we have via Remark \ref{unimodular} that
\begin{align*}
\int_{\Gamma_{\gamma}\setminus G} f(g^{-1}\gamma g) \; dg&=\int_{G_{\gamma}\setminus G} \left( \int_{\Gamma_{\gamma}\setminus G_{\gamma}} f((g_{\gamma}g)^{-1}\gamma g_{\gamma}g) dg_{\gamma}\right) \frac{dg}{dg_{\gamma}}\\
&=\int_{G_{\gamma}\setminus G} \left(f(g^{-1}\gamma g) \int_{\Gamma_{\gamma}\setminus G_{\gamma}} dg_{\gamma}\right) \frac{dg}{dg_{\gamma}}\\
&=\mathrm{vol}(\Gamma_{\gamma}\setminus G_{\gamma},dg_{\gamma})\int_{G_{\gamma}\setminus G} f(g^{-1}\gamma g)\frac{dg}{dg_{\gamma}}
\end{align*}
where we used that $g_{\gamma}$ commutes with $\gamma$. We therefore obtain
\begin{equation*}
\mathrm{trace} \; R(fdg)=\sum_{[\gamma]\in \{\Gamma\}}\mathrm{vol}(\Gamma_{\gamma}\setminus G_{\gamma},dg_{\gamma})\int_{G_{\gamma}\setminus G} f(g^{-1}\gamma g)\frac{dg}{dg_{\gamma}}.
\end{equation*}
This is a far reaching generalization of the right hand side of \eqref{mckean}. For example, given a torsion-free cocompact lattice $\Gamma\subset\mathrm{PSL}_2(\mathbb{R})$ and a non-trivial conjugacy class $[\gamma]$, for natural choices of Haar measures $\mathrm{vol}(\Gamma_{\gamma}\setminus G_{\gamma},dg_{\gamma})$ specializes to the more familiar quantity $\ell( \gamma_0).$ This follows because in this case $\gamma$ is hyperbolic, $G_{\gamma}$ is the copy of $\mathbb{R}$ consisting of the identity and the hyperbolic elements having the same axis as $\gamma$ (in particular $G_{\gamma}$ is unimodular), and $\Gamma_{\gamma}$ is the cyclic group generated by $\gamma_0$ (see \cite[Section $4.2.1$]{Mar}).

\bigskip

We also introduce the notation
\begin{equation*}
O_{\gamma}\left(f\frac{dg}{dg_{\gamma}}\right)=\int_{G_{\gamma}\setminus G} f(g^{-1}\gamma g) \; \frac{dg}{dg_{\gamma}}
\end{equation*}
and refer to this quantity as an \textit{orbital integral}.

\bigskip

The spectral side of McKean's formula \eqref{mckean}, which involves the spectral theory of the Laplacian, is generalized in terms of the representation theory of $G$. Namely, $L^2(\Gamma\backslash G)$ carries a natural unitary action of $G$ by right translation. One can show that, as a unitary representation of $G$, it decomposes as an orthogonal direct sum of irreducible unitary representations
\begin{equation}\label{decrep}
L^2(\Gamma\setminus G) = \bigoplus_{\pi\in\widehat{G}}m_\Gamma(\pi) \cdot \pi,
\end{equation}
where $\widehat{G}$ denotes the set of equivalence classes of unitary irreducible representations, and each multiplicity $m_{\Gamma}(\pi)$ is finite \cite[Theorem $3.16$]{White}. Notice also that the number of representations with $m_{\Gamma}(\pi)\neq0$ is countable, because $L^2(\Gamma\setminus G)$ is a separable Hilbert space (and therefore every orthonormal basis is countable). Consider again the operator $R(fdg)$ for $f\in C_c^\infty(G)$. This preserves the decomposition into irreducibles $(\ref{decrep})$, and so 
\begin{equation*}
\mathrm{trace} \; R(fdg)=\sum_{\pi\in\widehat{G}}m_\Gamma(\pi) \cdot \mathrm{trace} \; (\pi(fdg)),
\end{equation*}
where, for a representation $\pi$ of $G,$ $\pi(fdg)$ is defined as
\begin{equation*}
v\mapsto \int_G f(g)(\pi(g)v)dg.
\end{equation*}
Putting everything together proves the following (see also Section $3.5$ in \cite{White}): 

%\begin{equation}\label{generaltrace}
%\sum_{\pi\in\widehat{G}}m_\pi \mathrm{trace} \; (\pi(f))=\sum_{\gamma\in \{\Gamma\}}\mathrm{vol}(\Gamma_{\gamma}\setminus G_{\gamma}, dx_g)\int_{G_{g}\setminus G} f(x^{-1}\gamma x) \frac{dx}{dx_g}.
%\end{equation}

\begin{thm} \label{traceformulagrouplevel}
Given a unimodular group $G$, consider a cocompact lattice $\Gamma$, and assume that all centralizers $G_{\gamma}$ are unimodular. Then, for every smooth compactly supported function $f$ on $G,$ there is an equality
\begin{equation} \label{traceformulaequation}
\sum_{\pi}  m_{\Gamma}(\pi) \cdot \mathrm{trace}(\pi(fdg) ) = \sum_{[\gamma]\in\{\Gamma\}} \vol(\Gamma_{\gamma} \backslash G_{\gamma}, dg_{\gamma}) \cdot O_{\gamma}\left(f\frac{dg}{dg_{\gamma}}\right).
\end{equation}
The right side of \eqref{traceformulaequation} is called the \emph{geometric side of the trace formula}.  The left side of \eqref{traceformulaequation} is called the \emph{spectral side of the trace formula}.
\end{thm}  
\begin{remark}
Stated in this generality, the theorem does not require the cocompact lattice $\Gamma$ to be torsion-free. On the other hand, we will only work with torsion-free lattices in what follows.\end{remark}

\begin{remark}\label{notuni}
As mentioned in Remark \ref{unimodular}, the assumptions on unimodularity can be dropped, see \cite{Selberg1} \cite{Selberg2} \cite{Tamagawa}. In fact, one shows that the mere existence of a cocompact lattice $\Gamma\subset G$ implies that $G$ and all centralizers $G_{\gamma}, \gamma \in \Gamma,$ are unimodular.
\end{remark}

\vspace{0.3cm}

\subsection{Specializing the general Selberg trace formula} \label{specializingtraceformula}
The expression from Theorem \ref{traceformulagrouplevel}, as written, is too general for practical use.  It contains information about the spectra of all $G$-invariant differential operators on $\Gamma\setminus G/K$ and bundles thereon. One can obtain equations such as \eqref{tracesurface} and the formula in Theorem \ref{Thm4} by looking at the trace of suitable $f\in C_c^\infty(G)$ to isolate a much smaller subset of these spectra. While the process of specializing the trace formula to concrete examples is somewhat technical, our main goal in this subsection is to make the basic idea behind it transparent.
\vspace{0.3cm}

Consider the case of functions on a hyperbolic surface $X=\Gamma\setminus \mathbb{H}^2$. We refer the reader to Section $4$ of \cite{White} for a more detailed discussion. Functions on $X$ correspond to functions in $L^2(\Gamma \backslash G)$ which are invariant under the action of $K$ by right translation. Therefore, if we wish to study spectrum of functions on $X$, we only really care about representations $\pi\in\widehat{G}$ which admit non-trivial $K$-invariant vectors, i.e. $\pi^K\neq0$. These representations can be isolated in Theorem \ref{traceformulagrouplevel} by taking convolution with test functions $f\in C_c^\infty(G)$ with are invariant under the actions of $K$ by both right and left translations. In fact, if $f$ is such a function, denoting by $R_k$ the right translation by $k\in K$ on $L^2(\Gamma \backslash G),$ we have $R(fdg)=R(fdg)\circ R_k$, and
\begin{align*}
R(fdg) &= \int_K R(fdg)\circ R_kdk \\
&= R(fdg)\int_K R_k dk.
\end{align*}
But acting through a representation $\pi$, the operator $\int_K R_k dk$ corresponds to the orthogonal projection onto $\pi^K$. Hence, if $\pi^K=0$, then $\pi(f)$ has trace zero. Let us point out that there are plenty of $K$-bi-invariant functions on $G$; in fact, they correspond one-to-one with even functions on $\mathbb{R}$ via the so-called \textit{Harish-Chandra transform}. Here we identify $\mathbb{R}$ with the subgroup $A\cong \mathbb{R}^{>0}\subset G$ of positive diagonal matrices (via the exponential), and define for a $K$-bi-invariant function $f$
\begin{equation}\label{harishchandra}
Hf\left(\begin{array}{cc}a&0\\0 & a^{-1}\end{array}\right)=a\cdot \int_{\mathbb{R}}f\left(\begin{array}{cc}a&ax\\0 & a^{-1}\end{array}\right)dx,
\end{equation}
see \cite[Section $4.5$]{White}. In fact, this is essentially the translation in the language of representation theory of the Abel transform we discussed in Subsection \ref{geomheat}. First of all, $K$ bi-invariant functions on $G$ correspond to radial functions on $\mathbb{H}^2$ of the form $f(d(i,\cdot))$. Furthermore, in the quotient $G/K=\mathbb{H}^2$, the argument of $f$ in the integral \eqref{harishchandra} corresponds to the point $a^2x+ia^2$. We are therefore integrating $f$ along the horocycle $y=a^2$, and we can think of $Hf$ as a function constant on such horocycles. In this language, the fact that the Harish-Chandra transform is a bijection corresponds to the fact that the Abel transform admits an explicit inverse.
\\
\par
Furthermore, we have a complete understanding of the (non-trivial) unitary representations $\pi$ of $\mathrm{PSL}_2(\mathbb{R})$ for which $\pi^K\neq0$. These are often denoted by $\pi_s$, where $s\in i\mathbb{R}\cup [-1,1]$, and are all infinite dimensional. The ones corresponding to an imaginary parameter are called \textit{(unitary) principal series representations}, while those corresponding to a real parameter are called \textit{complementary series representations}. We have in these cases that $\pi_s^K$ is one dimensional, and via the correspondence between $K$-fixed elements and functions on $X,$ a basis element of this one dimensional space $\pi_s^K \subset L^2(\Gamma \backslash X)$ corresponds to an eigenvector of the Laplacian of eigenvalue $(1-s^2)/4$. To conclude, by taking $f$ to be $K$-bi-invariant, the left side (the ``spectral side") of Theorem \ref{traceformulagrouplevel} reduces to a non-trivial sum involving the eigenvalues of the Laplacian on $X$, while the right side (the ``geometric side") reduces to a sum involving the lengths of the geodesics of $X$. To massage this formula into its final form (\ref{tracesurface}), one needs to perform a series of non-trivial computations (see Section $4$ of \cite{White}).

\vspace{0.3cm}
Finally, an analogous approach works in the context of the present paper.  In our context, $G=\mathrm{PGL}_2(\mathbb{C})$ and $K=\mathrm{PU}(2) \cong \mathrm{SO}_3$. As we are interested in $1$-forms, instead of looking at representations $\pi\in\widehat{G}$ with $\pi^K\neq 0$, we look at those containing copies of the standard representation of $K=\mathrm{SO}_3$ on $\mathbb{R}^3.$ An additional complication in this case is that the spectrum on $1$-forms contains both the spectrum on exact forms (i.e. the non-trivial spectrum on functions) and coexact forms (which is what we are really interested in). Using test functions of a very particular type, whose existence is afforded to us by a theorem of Bouaziz \cite{Bouaziz}, we are nonetheless able to isolate the contribution of coexact $1$-forms.  We do so in \S \ref{traceformula}.

\begin{remark}
As in the case of $\mathrm{PSL}_2(\mathbb{R})$, the group $G=\mathrm{PGL}_2(\mathbb{C})$ is semisimple hence unimodular. Furthermore, we will see that in our setup all relevant centralizers are also unimodular, so that Theorem \ref{traceformulagrouplevel} can be applied. On the other hand, when describing explicitly the unitary irreducible representations of both $\mathrm{PSL}_2(\mathbb{R})$ (which are the $\pi_s$ mentioned above) and $\mathrm{PGL}_2(\mathbb{C})$, one is naturally led to consider the subgroups of upper triangular matrices, which are not unimodular. We will discuss in detail the construction of the unitary irreducible representations of $\mathrm{PGL}_2(\mathbb{C})$ in Subsection \ref{unitarydual} of Appendix $B$, while the case of $\mathrm{PSL}_2(\mathbb{R})$ can be found in \cite[Section $4.4$]{White}.
\end{remark}

\vspace{0.5cm}
\section{The trace formula specialized to coexact 1-forms on hyperbolic 3-manifolds} \label{traceformula}

Consider a closed oriented hyperbolic $Y$ three-manifold, and fix a smooth compactly supported even test function $H$ on $\mathbb{R}.$  Our goal is to explain how, by specializing the general trace formula
\begin{equation}\label{traceappendixform}
\sum_{\pi}  m_{\Gamma}(\pi) \cdot \mathrm{trace}(\pi(fdg) ) = \sum_{[\gamma]\in\{\Gamma\}} \vol(\Gamma_{\gamma} \backslash G_{\gamma}, dg_{\gamma}) \cdot O_{\gamma}\left(f\frac{dg}{dg_{\gamma}}\right)
\end{equation}
of Theorem \ref{traceformulagrouplevel} appropriately for the semisimple group $G=\mathrm{PGL}(2,\mathbb{C})$ and the torsion-free cocompact lattice $\Gamma\cong \pi_1(Y)\subset G$, one can obtain the identity of Theorem \ref{geometrictraceformulacoexact1formsnew}:
\begin{align*}
&{} \sum_{\lambda^\ast = \text{coexact 1-form eigenvalue}} \frac{1}{2} m_\Gamma(\lambda^\ast) \cdot \widehat{H} \left( \sqrt{\lambda^\ast} \right) + \left( \frac{1}{2} b_1 \left( Y \right) - \frac{1}{2}\right) \widehat{H}(0) \\
&=  \frac{\vol(Y)}{2\pi} \cdot \left(  H(0) - H''(0) \right) + \sum_{[\gamma] \neq 1} \ell(\gamma_0) \frac{ \cos( \mathrm{hol}(\gamma))}{ |1 - e^{\C \ell(\gamma)} | \cdot |1 - e^{-\C \ell(\gamma)}| } \cdot H(\ell(\gamma)). 
\end{align*}
Our discussion is structured as follows:
\begin{itemize}
\item in Subsection \ref{aboutG}, we discuss some relevant preliminary notions about $G$ needed for our computation. In particular, we review the identification of $G$ with the isometry group of $\mathbb{H}^3$, fix conventions for the Haar measures, and describe the classification of its irreducible unitary representations.
\item in Subsection \ref{shc}, we introduce the so-called Satake-Harish-Chandra transform of $f$. This is a function $Sf$ defined on the maximal torus $T$ of $G$, and is a generalization of the Harish-Chandra transform \eqref{harishchandra}.
\item In Subsections \ref{geomshc} and \ref{specshc} we begin our specialization process by showing how the terms in equation \eqref{traceappendixform} can all be expressed very concretely in terms of the Satake-Harish-Chandra transform $Sf$ of $f$. In particular, this will greatly simplify the orbital integral on the geometric side and the trace computation on the spectral side.
\item in Subsection \ref{noshc} we invoke a result of Bouaziz to rewrite the trace formula directly in terms of a given function $F$ on $T$, rather than the transform $Sf$ of $f$.
\end{itemize}
The formula obtained this way (see Corollary \ref{preliminarygeometrictraceformula} below) is still too general for our application, as it takes into account all unitary irreducible representations of $G$ (hence, in some sense, all natural differential operators on $Y$). To remedy the situation, we proceed as follows:
\begin{itemize}
\item  In Subsections \ref{rep1form} and \ref{HodgeLaplacian}, we identify precisely which unitary irreducible representations of $G$ are relevant for our purposes, i.e. contain information about the spectrum on coexact $1$-forms.
\item Finally, in \ref{iso1form} and \ref{theend}, we complete the proof by choosing suitable functions $F$ on $T$ that isolate the contribution of the spectrum on coexact $1$-forms.
\end{itemize}

\vspace{0.3cm}
\subsection{Preliminaries on the Lie group $G$.}\label{aboutG} We begin by recalling some background notions regarding our group, fixing notations and Haar measures, and discussing its unitary irreducible representations.

\subsubsection{The isometry group of $\mathbb{H}^3$.}\label{GH3}We review some notions of hyperbolic geometry which are relevant for our purposes, and refer the reader to \cite[Chapter 1]{MacReid} for more a detailed discussion. While for our purposes it will be more convenient to deal with the group $G=\mathrm{PGL}_2(\mathbb{C})$, the relationship with hyperbolic geometry is more transparent when working with $\mathrm{PSL}_2(\mathbb{C})$ (which is clearly isomorphic to $G$). We will always work with the upper half-space model 
\begin{equation*}
\mathbb{H}^3=\mathbb{C}\times \mathbb{R}^{>0}
\end{equation*}
with coordinates $(z,t)$ and metric tensor $g_{\mathbb{H}^3}=\frac{1}{t^2}g_{\mathrm{eucl}}$. The group of orientation-preserving isometries of $\mathbb{H}^3$ is isomorphic to $\mathrm{PSL}_2(\mathbb{C})$, and the action of
\begin{equation*}
\left( \begin{array}{cc} a & b \\ c & d \end{array} \right)\in\mathrm{PSL}_2(\mathbb{C})
\end{equation*}
on $\mathbb{H}^3$ is given by
\begin{equation}\label{upperaction}
\left( \begin{array}{cc} a & b \\ c & d \end{array} \right)\cdot (z,t)=\left(\frac{(az+b)(\bar{c}\bar{z}+\bar{d})+a\bar{c}t}{|cz+d|^2+|c|^2t^2}, \frac{t}{|cz+d|^2+|c|^2t^2}\right)
\end{equation}
see \cite[Section $1.1$]{EGM}.
\par
A compact hyperbolic three-manifold $Y$ corresponds to a quotient $\Gamma\setminus \mathbb{H}^3$ with $\Gamma\subset \mathrm{PSL}
_2(\mathbb{C})$ a torsion-free cocompact lattice; in this case, every non-trivial element in $\Gamma$ is loxodromic. Recall that a loxodromic element $\gamma$ is an element whose action on $\mathbb{H}^3$ has exactly two fixed points, both at infinity; this is equivalent to $\mathrm{tr} \; \gamma \in \mathbb{C}\setminus[-2,2]$. Geometrically it corresponds to a screw motion translating by $\ell(\gamma)>0$ along the geodesic connecting the two fixed points at infinity (called the \textit{axis} of $\gamma$), and simultaneously rotating by $\mathrm{hol}(\gamma)$ around it. There is a bijection between non-trivial conjugacy classes in $\Gamma$ and oriented closed geodesics in $Y$ (\cite[Lemma $4.1.5$]{Mar}), and under this bijection the complex length $\C \ell(\gamma)=\ell(\gamma)+i \; \mathrm{hol}(\gamma)$ of the class of an element $\gamma$ conjugate to
\begin{equation*}
\left( \begin{array}{cc} w & 0 \\ 0 & w^{-1} \end{array} \right),\quad |w|>1
\end{equation*}
is $\mathbb{C}\ell(\gamma)=2\log w$ \cite[Section 12.1]{MacReid}.
\begin{remark}\label{lenPGL}
When thinking of $\mathrm{PGL}_2(\mathbb{C})$, any loxodromic element $\gamma$ is conjugate to 
\begin{equation*}
\left( \begin{array}{cc} z & 0 \\ 0 & 1 \end{array} \right),\quad |z|>1
\end{equation*}
for some $z$, and we have $\mathbb{C}\ell(\gamma)=\log z$.
\end{remark}
We conclude by discussing the various centralizers that will appear, see \cite[Section 4.2.1]{Mar} for details. For a given loxodromic element $\gamma$, the centralizer $G_{\gamma}$ consists of the identity element, all the loxodromic elements which translate with rotation along the axis of $\gamma$ (possibly in the opposite direction) and all the elliptic elements fixing pointwise the axis of $\gamma$. It is therefore a copy of $\mathbb{R} \times S^1\cong \mathbb{C}^\times.$ In particular it is abelian, hence unimodular (cf. Remark \ref{unimodular}), so that the general trace formula of Theorem \ref{traceformulagrouplevel} applies.
\par
Similarly, the centralizer $\Gamma_{\gamma}$ of $\gamma$ in the cocompact torsion-free lattice $\Gamma\cong\pi_1(Y)$ is the cyclic group generated by a primitive loxodromic element $\gamma_0$ with $\gamma=\gamma_0^n$. At the level of geodesics, $\gamma_0$ corresponds to a prime geodesic of which $\gamma$ is a multiple.

\vspace{0.3cm}
\subsubsection{Subgroups of $G = \mathrm{PGL}_2(\C)$.}\label{notation} We now introduce some distinguished subgroups of $\mathrm{PGL}_2(\C)$ (defined via subgroups on $\mathrm{GL}_2(\C)$ under the quotient map):
\begin{itemize}
\item $B$, the subgroup of upper triangular matrices;
\item $K = \mathrm{PU}_2\cong \mathrm{SO}_3$; this is a maximal compact subgroup of $G$ corresponding to the stabilizer of $(0,1)\in \mathbb{H}^3$, and $\mathbb{H}^3=G/K$.
\item $T$, the subgroup of diagonal matrices (the unique maximal torus, up to conjugation);
\item $A$, the subgroup of diagonal matrices with real entries;
\item $M$, the maximal compact subgroup of $T$ (diagonal unitary matrices);
\item $N$, the subgroup of upper triangular matrices with both diagonal terms equal to $1$.
\end{itemize}
Recall that the Iwasawa decomposition \cite[Chapter VI.4]{Knapp} implies that the multiplication map $N\times A\times K\rightarrow G$ is a diffeomorphism. In particular, every element in $G$ can be written uniquely in the form $nak$ where $n,a,k$ are in $N,A,K$ respectively. For our purposes, it will be useful to use a slightly different decomposition, see Remark \ref{measnot} below. Notice that $nak=a (a^{-1}na)k$, and that $A$ normalizes $N$. Therefore the multiplication map
\begin{equation}\label{iwasawa}
A\times N\times K\rightarrow G
\end{equation}
is also a diffeomorphism.
\\
\par
Furthermore, we will use the following notation:
\begin{itemize}
\item We denote by $W = N(T) / T,$ the Weyl group of $T$. Here $N(T)$ denotes the normalizer of $T$. The group $W$ consists of two elements.
\item We will denote the Lie algebra of a given Lie group with the corresponding gothic letter. For example $\mathfrak{g}$ is the Lie algebra obtained from the matrix Lie algebra $\mathfrak{gl}_2(\C)$ by quotienting by multiples of the identity matrix. 
\item For $S \subset G$ consisting of semisimple elements, let $S_{\mathrm{reg}}$ denote those elements of $S$ which are \textit{regular}, i.e. the elements for which the centralizer of $S$ is a maximal torus. In our context, every non-trivial element of $\Gamma$ is loxodromic and therefore regular; also, $T_{\mathrm{reg}}$ consists of all elements except those of order at most two.
\end{itemize}

\subsubsection{Haar measures}
While the trace formula is valid for any choice of measure on $G$ and the centralizers $G_{\gamma}$, for explicit computations it is convenient to fix concrete measures. Notice that all distiguished groups above are unimodular with the only exception of $B$. In the latter case, the modular function (\ref{modfun}) is given by
\begin{equation}\label{modcharB}
\delta\left( \begin{array}{cc} a_1 & \ast \\ 0 & a_2 \end{array} \right):=\Delta_B\left( \begin{array}{cc} a_1 & \ast \\ 0 & a_2 \end{array} \right)=|a_1/a_2|^2.
\end{equation}
For the rest of this Appendix, we will make the following choices of (bi-invariant) Haar measures:
\begin{itemize}
\item
$dk$ denotes the volume $1$ Haar measure on $K.$

\item
$da = du,$ where $A = \left\{ \left( \begin{array}{cc} e^u & 0 \\ 0 & 1 \end{array} \right): u \in \mathbb{R} \right\}$ and $du$ is standard Lebesgue measure on $\mathbb{R}.$ 

\item 
$dm=\frac{1}{2\pi}d\theta$ where $M = \left\{ \left( \begin{array}{cc} e^{i\theta} & 0 \\ 0 & 1 \end{array} \right): \theta \in \mathbb{R} / 2\pi \mathbb{Z}  \right\}$ with $d\theta$ the standard Lebesgue measure. This measure has volume $1$.

\item
$dt = \frac{1}{2\pi}d\theta \; du,$ where $T =AM= \left\{ \left( \begin{array}{cc} e^{u + i\theta} & 0 \\ 0 & 1 \end{array} \right): u \in \mathbb{R}, \theta \in \mathbb{R} / 2\pi \mathbb{Z} \right\}$.  

\item
$dn$ is the standard Euclidean measure $dx \; dy$ on $N = \left\{ \left( \begin{array}{cc} 1 & x + iy \\ 0 & 1 \end{array} \right): x,y \in \mathbb{R} \right\}.$
\end{itemize}
Via the decomposition \eqref{iwasawa}, these can be combined to define the measure $dg=da\;dn \;dk$ on $G$, meaning that
\begin{equation}\label{haarG}
 \int_G f(g)dg=\int_A\int_N\int_K f(ank) da\;dn \;dk.
\end{equation}
The right side of \eqref{haarG} does indeed define a Haar measure on $G$. This follows directly by applying the result \cite[Theorem $8.32$]{Knapp} on decompositions of Haar measures first to the product subgroup $H=AN\subset G$ and then to the product $HK=G$. The only non-trivial observation is that the modular function $\Delta_H$ of $H$ is trivial on its second factor $N$.
\begin{remark}\label{measnot}
This is why we use the decomposition $ANK$ instead of $NAK$: the modular function of $NA$ is \textit{not} trivial on $A$.
\end{remark}
\vspace{0.3cm}
\subsubsection{The classification of irreducible unitary representations of $\mathrm{PGL}_2(\C)$} \label{unitarydual}
We follow \cite[II \S 4]{Knapp2}, to which we refer for additional details. The trivial representation $\mathbf{1}$ is clearly unitary and irreducible. The other such representations are all infinite dimensional, and are parametrized by $n \in \mathbb{Z}, s \in \C$ as in the following discussion. Let $\chi_{s,n}: B \rightarrow \C^\times$ denote the character
\begin{equation}\label{chisn}
\chi_{s,n}: \left( \begin{array}{cc} a & \ast \\ 0 & d \end{array} \right) \mapsto \left|  a/d \right|^s \cdot \left( \frac{a/d}{\left| a/d \right|} \right)^n =:  \chi_s  \left( \begin{array}{cc} a & \ast \\ 0 & d \end{array} \right) \cdot \chi_n  \left( \begin{array}{cc} a & \ast \\ 0 & d \end{array} \right).
\end{equation}
Denote by $\pi_{s,n}$ the induced representation
\begin{equation*}
\pi_{s,n} := \mathrm{Ind}_B^G (\chi_{s,n}\cdot\delta^{1/2}),
\end{equation*}
where $\delta$ is the modular function of $B$ in Equation (\ref{modcharB}). Very explicitly, this is defined as follows.\footnote{Recall that given groups $K\subset H$, and a representation $\pi$ of $K$ on $V$, the (algebraic) induced representation $\mathrm{Ind}_K^H \pi$ is the set of functions $f:H\rightarrow V$ for which $f(kh)=\pi(k)f(h)$, with the action of $H$ by right translation. In our setup, one needs a little more attention in order to define a Hilbert space structure.} Consider the space of functions
\begin{equation}\label{funcsn}
V_{s,n}=\{ f:G\rightarrow \mathbb{C} \lvert \text{ $f$ is smooth and } f(bg)=\chi_{s,n}(b)\cdot \delta(b)^{1/2}f(g)\text{ for all }b\in B\}.
\end{equation} 
Given that $G=BK$, a function $f$ in $V_{s,n}$ is determined by the restriction $f\lvert_K$, and we set $\pi_{s,n}$ to be completion of $V_{s,n}$ with the respect to the $L^2$-norm
\begin{equation*}
\|f\|_{V_{s,n}}= \| f\lvert_K\|_{L^2(K)}.
\end{equation*}
The action of $G$ on $\pi_{s,n}$ is via right translations.
\par
We can also describe of $\pi_{s,n}$ more concretely as follows. For $G=\mathrm{PSL}_2(\C)$, we can identify
\begin{equation*}
N\setminus G\cong \left(\C^2\setminus{(0,0)}\right)/\sim\quad \text{where }(x,y)\sim (-x,-y).
\end{equation*}
Here we think of $\C^2$ as the space of row vectors, with the right action of $\mathrm{SL}_2(\C)$ (so that $N$ is the stabilizer of $(0,1)$). As $\delta$ and $\chi_{s,n}$ are trivial on $N$, a function $f$ in $V_{s,n}$ descends to a function $\bar{f}$ on $N\setminus G$ such that
\begin{equation*}
\bar{f}(\lambda(x,y))=|\lambda|^{-2s-2}\left(\frac{\lambda}{|\lambda|}\right)^{-2n}\bar{f}(x,y)
\end{equation*}
for all $\lambda\in\mathbb{C}^\times$. One readily checks that this construction defines a bijection between $V_{s,n}$ and functions on $N\setminus G$ with this homogeneity property.
\\
\par
All irreducible unitary representations of $G,$ besides the trivial one, are of the form $\pi_{s,n}.$  However, the condition that $\pi_{s,n}$ is unitary (i.e. it admits a $G$-invariant inner product) severely restricts the possible $s,n$. Indeed, there are only two classes of such representations:
\begin{itemize}
\item
For all $n\in\mathbb{Z}$ and $s\in i\mathbb{R}$, $\chi_{s,n}$ is a unitary character. In this case $\pi_{s,n}$ is irreducible and one can check that the inner product associated to $\|\cdot\|_{V_{s,n}}$ is in fact $G$-invariant. The representations $\pi_{s,n}, s \in i \R,$ are called \emph{unitary principal series} representations.
\item
For $s \in [-1,1] \backslash \{0\},$ the representations $\pi_{s,0}$ are all irreducible and admit a strange $G$-invariant inner product. These representations are known as \emph{complementary series} representations.  
\end{itemize}
Finally, the only coincidences among the representations $\pi_{s,n}$ are 
$$\pi_{s,n} \cong \pi_{\bar{s}, -n},$$
see also Remark \ref{isomorphicinducedreps} below for an explanation of these coincidences in terms of traces. We sum up our discussion as follows.
\begin{prop}[\cite{Knapp2}, II $\S 4$]\label{allirrep}
Every unitary irreducible representation of $G$ is isomorphic to one of the following:
\begin{itemize}
\item the trivial representation $\mathbf{1}$;
\item $\pi_{s,n}$ where $n\geq 1$ and $s\in i\mathbb{R}$;
\item $\pi_{s,0}$ where $s\in i\mathbb{R}^{\geq0}\cup [0,1]$.
\end{itemize}
Furthermore, two distinct representations in this list are not isomorphic.
\end{prop}

\vspace{0.3cm}
\subsection{The Satake-Harish-Chandra transform.}\label{shc}
As discussed in the previous section, the group $G$ and the relevant centralizers are all unimodular, so that the trace formula of Theorem \ref{traceformulagrouplevel} applies. Our first goal in specializing the trace formula is to show how all the terms appearing in \eqref{traceappendixform} can be expressed in terms of a special integral transform of $f$. This is the natural generalization of the Harish-Chandra (or equivalently Abel) transform used in Section \ref{heatkernel} to functions which are not necessarily $K$-bi-invariant. We will need this more general type of functions in order to describe differential forms on $Y$ from a representation-theoretic viewpoint.
\\
\par
Given a compactly supported smooth function $f$ on $G$, we define its \emph{Satake-Harish-Chandra transform} to be the function
\begin{align*}
Sf&:T\rightarrow \mathbb{C}\\
t&\mapsto \delta(t)^{1/2}  \int_N \int_K f(k^{-1}t n k) dk dn 
\end{align*}
where $\delta(t)$ is the modular function defined in  \eqref{modcharB} evaluated at $t \in T\subset B$. The function $Sf(t)$ is cleary smooth and compactly supported; it is also invariant under the action of the Weyl group
\begin{equation*}
(u,\theta)\rightarrow (-u,-\theta)\text{ or, equivalently, }t\rightarrow t^{-1},
\end{equation*}
cf. Proposition \ref{orbitalsatake} below. Notice that if $f$ is $K$-bi-invariant, and $a$ is positive, the integral simplifies to
\begin{equation*}
Sf\left(\begin{array}{cc}a&0\\0 & a^{-1}\end{array}\right)=a^2\cdot \int_{\mathbb{C}}f\left(\begin{array}{cc}a&an\\0 & a^{-1}\end{array}\right)dn.
\end{equation*}
which is the direct analogue of \eqref{harishchandra}. On the other hand, as we are interested in coexact $1$-forms, we will need to consider functions which are not necessarily $K$-bi-invariant. A key result we will need in our discussion (generalizing the fact that the Harish-Chandra transform is a bijection), is a suitable surjectivity statement for the Satake-Harish-Chandra transform, see Subsection \ref{noshc} below.

\vspace{0.3cm}
\subsection{The geometric side of the trace formula and the Satake-Harish-Chandra transform}\label{geomshc}
In this section we begin to specialize the geometric side of the general trace formula \eqref{traceappendixform} to our specific case of interest, and show how it can be expressed in terms of $Sf$. In particular, we need to compute for each conjugacy class the orbital integral
\begin{equation*}
O_{\gamma}\left(f\frac{dg}{dg_{\gamma}}\right)=\int_{G_{\gamma}\setminus G} f(g^{-1}\gamma g) \; \frac{dg}{dg_{\gamma}}
\end{equation*}
and the covolume of the centralizer
\begin{equation*}
\vol(\Gamma_{\gamma} \backslash G_{\gamma}, dg_{\gamma}).
\end{equation*}
We will see that the former can be expressed in terms of the Satake-Harish-Chandra transform of $f$, while the latter admits a direct geometric meaning in terms of translation length. The computation for the trivial conjugacy class is very different from that of a loxodromic conjugacy class, and we begin with the latter.

\subsubsection{Orbital integrals for loxodromic classes}\label{orbint} Because $\Gamma=\pi_1(Y)$ is torsion-free and cocompact, every $1 \neq \gamma \in \Gamma$ is loxodromic, hence regular (see Subsection \ref{notation}). In particular,
\begin{equation*}
h^{-1} \gamma h = t_{\gamma} \in T_{\mathrm{reg}}\text{ for some }h \in G.
\end{equation*}
This choice of $h$ is unique up to right multiplication by $N(T).$  We can define a specific Haar measure $dg_{\gamma}$ on the centralizer $G_{\gamma}$ by
\begin{equation*}
dg_{\gamma} = (\text{conjugation by } h)_\ast dt.
\end{equation*}
Because the Haar measure $dt$ is invariant under $N(T),$ the above specification of $dg_{\gamma}$ is well-defined.  In particular,
\begin{equation*}
O_{\gamma} \left(f \frac{dg}{dg_\gamma} \right) = O_{t_\gamma} \left( f \frac{dg}{dt} \right).
\end{equation*}
We now discuss how to compute the latter integral. We have the following.
\begin{prop}\label{orbitalsatake}
For every element $t\in T_{\mathrm{reg}}$, there is an equality
\begin{equation*}
O_{t} \left( f \frac{dg}{dt} \right)=| D(t^{-1})|^{-1/2} Sf(t)
\end{equation*}
where
\begin{equation*}
D(t) := |\det( 1-\mathrm{Ad}(t) \lvert_{\mathfrak{t} \backslash \mathfrak{g}}  )|
\end{equation*}
is the Weyl discriminant, and $Sf$ is the Satake-Harish-Chandra transform.
\end{prop}
Very explicitly, if $t = \left( \begin{array}{cc} z & 0 \\ 0 & 1 \end{array} \right) \in \mathrm{PGL}_2(\C)$, we readily calculate that 
\begin{equation}\label{weyldiscr}
D(t) = |(1 - z)^2(1 - z^{-1})^2|.
\end{equation}
\begin{proof}
For a given $t \in T_{\mathrm{reg}}$ (for which $G_t=T$), this corresponds to the integral
\begin{equation*}
\int_{T\setminus G} f(g^{-1}tg)\frac{dg}{dt}.
\end{equation*}
Using the integration formulas in Remark \ref{unimodular}, we have, setting for the sake of notation $dy=\frac{dt}{da}$:
\begin{align*}
\int_{A\setminus G}f(g^{-1}tg)\frac{dg}{da}&=\int_{T\setminus G}\left(\int_{A\setminus T}f(g^{-1}y^{-1}t y g)\;dy\right )\;\frac{dg}{dt}\\
&=\int_{T\setminus G}\left(f(g^{-1}t  g)\int_{A\setminus T}1\;dy\right )\;\frac{dg}{dt}\\
&=\int_{T\setminus G} f(g^{-1}tg)\frac{dg}{dt}\\
&=O_t\left(f\frac{dg}{dt}\right),
\end{align*}
where we use that $y$ commutes with $t$ and that $A\setminus T=M$ has volume $1$. Using Equation \eqref{haarG}, we see then
\begin{align*}
O_t \left( f \frac{dg}{dt} \right)  &=\int_{A\setminus G}f(g^{-1}tg)\frac{dg}{da}\\
&= \int_N\int_K  f(k^{-1}n^{-1} t n k) dk dn \\
&= \int_N\int_K f(k^{-1}t (t^{-1} n^{-1} t n) k) dk dn. \\
\end{align*}
The Jacobian of the change of variables $t^{-1} n^{-1} t n \leftrightarrow n$ is the constant $\delta(t)^{-1/2} | D(t^{-1})|^{1/2}$, hence 
\begin{align*}
O_t \left( f \frac{dg}{dt} \right)  &= | D(t^{-1})|^{-1/2} \delta(t)^{1/2}  \int_N \int_K f(k^{-1}t n k) dk dn, \\
\end{align*}
and the result follows.\end{proof}

\vspace{0.3cm}
\subsubsection{Covolumes of centralizers of loxodromic elements} \label{covolume}

Let $1 \neq \gamma \in \Gamma.$  As recalled in Subsection \ref{GH3}, the centralizer $\Gamma_\gamma$ equals $\langle \gamma_0 \rangle,$ where $\gamma_0 \in \Gamma$ is primitive and $\gamma=\gamma_0^n$. Therefore,
\begin{align*}
\vol( \Gamma_\gamma \backslash G_\gamma, dg_{\gamma} ) &= \vol( \langle \gamma_0 \rangle \backslash G_\gamma, dg_{\gamma}) \\
&= \vol( \langle t_{\gamma_0} \rangle \backslash T , dt).
\end{align*} 
Suppose $$t_{\gamma_0} = \left( \begin{array}{cc} z_0 & 0 \\ 0 & 1 \end{array} \right) \in \mathrm{PGL}_2(\C),\quad |z_0|>1.$$  
With respect to our chosen Haar measures,
\begin{equation*}
 \vol( \langle t_{\gamma_0} \rangle \backslash T , dt) = \log |z_0| = \ell(\gamma_0),
\end{equation*}
see Remark \ref{lenPGL}.
\vspace{0.3cm}
\subsubsection{The identity element}
In the case of $1\in\Gamma$, $G_1=G$ and $\Gamma_1=\Gamma$. Therefore, with respect to our chosen Haar measure $dg = da \; dn \; dk,$
$$\vol(\Gamma \backslash G, dg) = \vol(Y),$$
and the contribution of the identity term to the trace formula equals
$$\vol(Y) \cdot f(1).$$
The next proposition expresses $f(1)$ in terms of the Satake-Harish-Chandra transform $Sf(t).$ It is a special case of the general \textit{Plancherel formula}, which expresses the value of a function at the identity element in terms of suitable integral transforms. For example, in the case the group is $\mathbb{R}$, it simply states the following familiar consequence of the Fourier inversion formula:
\begin{equation*}
H(0)=\frac{1}{2\pi}\int_{\mathbb{R}} \widehat{H}(t)dt.
\end{equation*} 
\begin{prop} \label{plancherel}
There is a constant $c > 0$ for which the identity
$$f(1) = -c \left(  \frac{d^2}{du^2} + \frac{d^2}{d\theta^2} \right) Sf |_{(u,\theta)=(0,0)}$$
holds for every smooth compactly supported $f$. 
\end{prop}

\begin{proof}
This is Lemma $11.1$ in \cite{Knapp2}, where in their notation $F^{T}_f$ is the Satake-Harish-Chandra transform, $\partial(\alpha)=\frac{d}{du}-i\frac{d}{d\theta}$ and $\partial(\bar{\alpha})=\frac{d}{du}+i\frac{d}{d\theta}$.
\end{proof}

\begin{remark}
While in principle the constant $c$ can be determined directly, we will instead derive it at the end of our computations via the Weyl law for the asymptotic number of coexact eigenvalues.
\end{remark}

\begin{comment}
\begin{remark}
Proposition \ref{plancherel} is equivalent to the statement that the Plancherel density on $\widehat{\mathrm{PGL}_2(\C)}^{\mathrm{tempered}} = \{ \pi_{i t, n}: n \in \mathbb{Z}, t \in \mathbb{R} \}$ is proportional to $(t^2 + m^2) dt$ on the $(n = m)$-component of the (tempered) unitary dual; here $dt$ is standard Lebesgue measure on $\R.$  See \S \ref{unitarydual} for notation and for the parametrization of the unitary dual of $\mathrm{PGL}_2(\C).$  
\end{remark}
\end{comment}

\vspace{0.3cm}
\subsection{The spectral side of the trace formula and the Satake-Harish-Chandra transform}\label{specshc} We now discuss how to compute the spectral side of \eqref{traceappendixform} in terms of $Sf$.  We need to compute $\mathrm{trace}(\pi(fdg))$ for every irreducible unitary $\pi$ of $G$ (classification recalled in Subsection \ref{unitarydual}).  Our goal is to express all of these traces in terms of the Satake-Harish-Chandra transform $Sf$.

\vspace{0.3cm}
\subsubsection{The trivial representation} In this case the trace is simply
\begin{equation*}
\mathrm{trace}( \mathbf{1}(f dg) ) = \int_G f(g) dg.
\end{equation*}
We have the following.
\begin{prop} \label{tracetrivialrep}
Given our choices of Haar measures, the identity
$$\mathrm{trace}( \mathbf{1}(f dg) ) = \frac{1}{|W|} \int_T D(t^{-1})^{1/2} \cdot Sf(t)  dt$$
holds. Here $W$ is the Weyl group of $T,$ hence $|W|=2$.
\end{prop}

\begin{proof}
This follows from the following computation
\begin{align*}
\int_G f(g) dg&= \frac{1}{|W|} \int_T | D(t^{-1})| \int_{T \backslash G} f(g^{-1} t g) \frac{dg}{dt} dt \\
&= \frac{1}{|W|} \int_T |D(t^{-1})|^{1/2} \cdot \left( |D(t^{-1})|^{1/2} O_t \left( f \frac{dg}{dt} \right) \right) dt \\
&= \frac{1}{|W|} \int_T |D(t^{-1})|^{1/2} \cdot Sf(t)  dt. 
\end{align*}
In the first line we used the Weyl integration formula for non-compact groups \cite[Theorem $8.64$]{Knapp}, where we set $r=1$ and $H_1=T$ because in our case $T$ is the unique maximal torus of $G$ up to conjugation. In fact, this last observation implies that the proof of the Weyl integration formula for compact groups \cite[Theorem $8.60$]{Knapp} applies directly.\end{proof}

\begin{remark}\label{mult1}
Notice that a copy of the trivial representation $\mathbf{1}\subset L^2(\Gamma\setminus G)$ corresponds to a $G$-invariant function; therefore $m_{\Gamma}(\mathbf{1})=1$, with $\mathbf{1}\subset L^2(\Gamma\setminus G)$ consisting of the subset of constant functions.
\end{remark}

\vspace{0.3cm}
\subsubsection{The representations $\pi_{s,n}$} Recall that given a compactly supported function
\begin{equation*}
F:T\rightarrow \C,
\end{equation*}
one can define its Fourier transform
\begin{equation*}
\widehat{F}:\widehat{T}\rightarrow \C,
\end{equation*}
where $\widehat{T}$ is the unitary dual, and
$$\widehat{F}(\chi) = \int_T F(t) \chi^{-1}(t) dt.$$ In our case, very explicitly, we identify $\widehat{T}=i\mathbb{R}\times \mathbb{Z}$, where $(r,m)$ corresponds to the $U(1)$-character
\begin{equation*}
(u,\theta)\mapsto e^{ru}e^{im\theta}.
\end{equation*}
The Fourier transform is then
\begin{equation*}
\widehat{F}(r,m)=\frac{1}{2\pi}\int_T F(u,\theta) e^{-ru}e^{-im\theta} du d\theta.
\end{equation*}
Of course, this expression also makes sense for $r$ real (because $F$ is compactly supported). We can therefore evaluate the Fourier transform $\widehat{Sf}$ of $Sf: T\rightarrow \mathbb{C}$ at all the characters $\chi_{s,n}$ in Equation \eqref{chisn} parametrizing the irreducible representations of $G$ (thought as elements of $\widehat{T}$ via restriction). Given this, we have the following computation.

\begin{prop} \label{traceinducedrep}
With respect to the Haar measure $dg = da \; dn \; dk,$
$$\mathrm{trace}(\pi_{s,n}(f dg) ) = \widehat{Sf}(\chi_{s,n}^{-1}),$$
where $\widehat{\cdot}$ denotes the Fourier transform
\end{prop}

\begin{proof}
Translating into our notation, Equation $(10.21)$ from \cite{Knapp2} says
\begin{align*}
\mathrm{trace}(\pi_{s,n}(f dg) )&=\frac{1}{2\pi}\int_T\int_N\int_K \delta(t)^{1/2}f(ktnk^{-1})e^{su}e^{in\theta}d\theta \;du\; dn\; dk\\
&= \frac{1}{2\pi} \int_TSf(t)e^{su}e^{in\theta}d\theta \;du\\
&=\widehat{Sf}(\chi_{s,n}^{-1}).
\end{align*}
In their notation, the parameter $(\sigma,\nu)$ corresponds to $(n,s)$, $a=e^u$ and $e^{\rho \log a}=\delta(t)^{1/2}$.
\end{proof}

\vspace{0.3cm}
\subsection{Getting rid of the Satake-Harish-Chandra transform}\label{noshc}
So far, we have succeeded in our first goal of expressing all the summands in the trace formula in terms of the Satake-Harish-Chandra transform $Sf$:
\begin{itemize}
\item combining Proposition \ref{traceinducedrep} and Proposition \ref{tracetrivialrep}, the spectral side of the trace formula for $f dg$ equals 
\begin{equation*} \label{spectralsidesatake}
\text{spectral side}(f dg) = \sum_{s,n} m_\Gamma(\pi_{s,n}) \cdot \widehat{Sf}(\chi_{s,n}^{-1}) + \frac{1}{|W|} \int_T|D(t^{-1})|^{1/2} \cdot Sf(t)  dt. 
\end{equation*}
\item combining the calculation of regular orbital integrals from Proposition \ref{orbitalsatake}, the computation of Subsection \ref{covolume}, and Proposition \ref{plancherel}, the geometric side of the trace formula for $f dg$ equals
\begin{equation*}\label{geometricsidesatake}
\text{geometric side}(f dg) = -c \left(  \frac{d^2}{du^2} + \frac{d^2}{d\theta^2} \right) Sf |_{t = 1} + \sum_{[\gamma] \neq 1} \ell(\gamma_0)\cdot |D(t_\gamma^{-1})|^{-1/2} \cdot Sf(t_\gamma),
\end{equation*}
where the sum runs over all non-trivial conjugacy classes in $\Gamma.$  The constant $c$ is the same as in the statement of Proposition \ref{plancherel}.  
\end{itemize}

Having expressed all terms of the trace formula, applied to $f dg,$ in terms of $Sf,$ it is essential to understand the image of the Satake-Harish-Chandra transform.  This was answered by Bouaziz \cite{Bouaziz} for all real semisimple groups $G.$  We state Bouaziz's theorem only in the special case $G = \mathrm{PGL}_2(\C).$

\begin{thm}[\cite{Bouaziz}] \label{bouazizimage}
For $G = \mathrm{PGL}_2(\C),$ every smooth, compactly supported, $W$-invariant function on $T$ is of the form $Sf$ for some smooth, compactly supported function $f$ on $G.$
\end{thm}  

In particular, this allows us to rephrase our computations purely in terms of a function $F:T\rightarrow \C$. Recall that in our setup the Weyl group $W$ consists of two elements and is generated by $(u,\theta)\mapsto (-u,-\theta)$.

\begin{cor}[Preliminary geometric trace formula] \label{preliminarygeometrictraceformula}
Let $F$ be any smooth, compactly supported function on $T$ for which $F(u,\theta)=F(-u,-\theta)$. The equality
\begin{align*}
&{} \sum_{s,n} m_\Gamma(\pi_{s,n}) \cdot \widehat{F}(\chi_{s,n}^{-1}) + \frac{1}{|W|} \int_T |D(t^{-1})|^{1/2} \cdot F(t)  dt \\
&=  -c \cdot \vol(Y) \cdot \left(  \frac{d^2}{du^2} + \frac{d^2}{d\theta^2} \right) F |_{t = 1} + \sum_{[\gamma] \neq 1} \ell(\gamma_0) \cdot |D(t_\gamma^{-1})|^{-1/2} \cdot F(t_\gamma), 
\end{align*}
holds, where $c$ is the constant from Proposition \ref{plancherel}.
\end{cor}

\begin{remark}\label{isomorphicinducedreps}The result of Bouaziz can also be used to understand coincidences between pairs irreducible unitary representations $\pi_{s,n}$, as mentioned in Subsection \ref{unitarydual}. This is because the representations $\pi_{s,n}$ and $\pi_{s',n'}$ are isomorphic iff they have equal traces, i.e. $\mathrm{trace}( \pi_{s,n}(f dg) ) = \mathrm{trace}(\pi_{s',n'}(f dg))$ for all smooth, compactly supported functions $f$ on $G.$  Equivalently by Proposition \ref{traceinducedrep}, for all smooth compactly supported $f$ we have
\begin{equation*}
\widehat{Sf}(\chi_{s,n}^{-1}) = \mathrm{trace}( \pi_{s,n}(f dg) ) = \mathrm{trace}( \pi_{s',n'}(f dg) )=\widehat{Sf}( \chi_{s',n'}^{-1}).
\end{equation*}
By Theorem \ref{bouazizimage}, the latter is equivalent to
$$\widehat{H}(\chi_{s,n}^{-1}) = \widehat{H}(\chi_{s',n'}^{-1})$$
for all $W$-invariant, compactly supported functions $H$ on $T.$  But this is only possible if $(s',n') = (s,n)$ or $(\bar{s}, -n)$.
\end{remark}

\vspace{0.3cm}
\subsection{Irreducible representations and coclosed $1$-forms.}\label{rep1form}
As written, the formula in Corollary \ref{preliminarygeometrictraceformula} is still too general, as it includes contributions from the eigenvalue spectrum of all natural differential operators on $\Gamma\setminus G/ K$ and not just the coexact 1-form eigenvalue spectrum.  In order to find a trace formula for coexact forms, we first need to understand which representations $\pi_{s,n}$ from Proposition \ref{allirrep} contribute to the spectrum on coclosed $1$-forms, and then we need to choose suitable test functions that isolate their contribution. The goal of this subsection is to tackle the first question, which will be answered in Proposition \ref{isococlosed} below.
\\
\par
We begin by discussing the representation theoretic interpretation of differential forms on $Y=\Gamma\setminus G/K$. We denote by $\mathfrak{p}_0\subset \mathfrak{g}$ the subspace $i\mathfrak{su}_2(\C)$ consisting of $2 \times 2$ traceless hermitian matrices. As $K$ is the stabilizer of $(0,1)\in\mathbb{H}^3$, we have the natural identification of $K$-representations
\begin{equation*}
\mathfrak{p}_0= T_{(0,1)}\mathbb{H}^3.
\end{equation*}
We will use the notation $\mathfrak{p}=\mathfrak{p}_0\otimes \C$, and interpret it as the complexified tangent space to $\mathbb{H}^3$ at $(0,1)$. Notice that the quotient map
\begin{equation*}
\Gamma\setminus G\rightarrow \Gamma\setminus G/K
\end{equation*}
is a principal $K$-bundle. Furthermore, the bundle of complex valued differential forms $\Omega^k(Y,\C)$ is the vector bundle associated to $\mathfrak{p}$ via the natural representation of $K$ on the dual exterior algebra $(\wedge^k\mathfrak{p})^{\vee}$. We therefore have the following.

\begin{lemma}[Matsushima] \label{Matsushima}
For $k = 0,1,2,3,$ there is a natural identification
$$\Omega^k(\Gamma \backslash \mathbb{H}^3,\C) = \mathrm{Hom}_K(\wedge^k \mathfrak{p}, C^{\infty}(\Gamma \backslash G)),$$
hence by (\ref{decrep}) the orthogonal Hilbert space decomposition
\begin{equation*}
L^2 \Omega^k(\Gamma \backslash \mathbb{H}^3,\C) = \bigoplus_{\pi \in \widehat{G}} m_\Gamma(\pi) \cdot \mathrm{Hom}_K(\wedge^k \mathfrak{p}, \pi)
\end{equation*}
holds.
\end{lemma}
The latter should be thought (as it will be made clearer later in \S \ref{HodgeLaplacian} and specifically Proposition \ref{kuga}) as a representation theoretic version of the eigenspace decomposition of the Hodge Laplacian. We will refer to non-zero elements in $\mathrm{Hom}_K(\wedge^k \mathfrak{p}, \pi)$ as $\wedge^k \mathfrak{p}$-\textit{isotypic} vectors in $\pi$. The main result of this subsection is the following.
\begin{prop}\label{isococlosed}
Consider the space of coclosed $1$-forms $\ker d^*\subset  \Omega^1$, and denote by $\overline{\ker d^*}$ its $L^2$-closure. 
Then, under the identification of Lemma \ref{Matsushima}, we have the orthogonal decomposition
\begin{equation*}
\overline{\ker d^*}=\bigoplus_{s\in i\mathbb{R}} m_{\Gamma}(\pi_{s,1})\cdot \mathrm{Hom}_K(\wedge^1 \mathfrak{p},\pi_{s,1}),
\end{equation*}
where the $\{\pi_{s,1}\}_{s\in i\mathbb{R}}$ are  unitary irreducible representations described explicitly in Subsection \ref{unitarydual}. Furthemore, each $\mathrm{Hom}_K(\wedge^1 \mathfrak{p},\pi_{s,1})$ is $1$-dimensional. Finally, as in the decomposition (\ref{decrep}), each multiplicity $m_{\Gamma}(\pi_{s,1})$ is finite and for only countably many $s\in i\mathbb{R}$ we have $m_{\Gamma}(\pi_{s,1})\neq0$.
\end{prop}
Roughly speaking, this proposition says that among all unitary irreducible subrepresentations contained in $L^2(\Gamma\setminus G)$, exactly those of the form $\pi_{s,1}$ contribute to the coclosed 1-form spectrum.  
\\
\par
The rest of the subsection is dedicated to the proof of Proposition \ref{isococlosed}. The first step is to understand which of the representations $\pi_{s,n}$ contain a $\wedge^k \mathfrak{p}$-isotypic vector. In what follows, we denote by $\wedge^k\mathfrak{p}|_M$ the representation of $M$ obtained by restriction (via the inclusion $M\subset K$).
\begin{lemma} \label{frobeniusreciprocity}
For all representations $\pi_{s,n}$ there is a canonical isomorphism of vector spaces
\begin{equation*}
\mathrm{Hom}_K(\wedge^k \mathfrak{p}, \pi_{s,n})\cong \mathrm{Hom}_M(\wedge^k\mathfrak{p}|_M, \chi_n)
\end{equation*}
where we recall that
\begin{equation*}
M=\left\{\left(\begin{array}{cc} e^{i\theta} &0\\0&1\end{array} \right): \theta \in \mathbb{R} / 2\pi \mathbb{Z} \right\}
\end{equation*}
and $\chi_n$ is the $1$-dimensional representation of $M$ with character
\begin{equation*}
\chi_n\left(\begin{array}{cc} e^{i\theta} &0\\0&1\end{array} \right)=e^{in\theta},
\end{equation*}
cf. Equation \eqref{chisn}.
\end{lemma}
\begin{proof}
We claim that there is an isomorphism of $K$-representations
\begin{equation}\label{isoind}
\pi_{s,n}\cong \mathrm{Ind}^K_M\chi_n.
\end{equation}
(This means, in particular, that for fixed $n$ and varying $s,$ the representations $\pi_{s,n}$ are all isomorphic as $K$-representations). Given this, one readily concludes because
\begin{equation*}
\mathrm{Hom}_K(\wedge^k \mathfrak{p}, \pi_{s,n})\cong \mathrm{Hom}_K(\wedge^k \mathfrak{p},\mathrm{Ind}^K_M\chi_n) \cong \mathrm{Hom}_M(\wedge^k\mathfrak{p}|_M, \chi_n)
\end{equation*}
where the second isomorphism is given by Frobenius reciprocity, i.e. the fact that restriction $|_M$ and induction $\mathrm{Ind}^K_M$ are adjoint functors \cite[Theorem $9.9$]{Knapp}.
\par
To see why (\ref{isoind}) holds, notice that $\mathrm{Ind}^K_M\chi_n$ consists of functions $f:K\rightarrow\C$ such that
\begin{equation*}
f(mk)=\chi_n(m)f(k)\text{ for all }m\in M, k\in K.
\end{equation*}
Noticing that $G=BK$, $M=B\cap K$, and the modular function $\delta$ of $B$ is trivial when restricted to $M$, one can associate to such $f$ a well-defined function $\tilde{f}:G\rightarrow \C$ satisfying
\begin{equation*}
\tilde{f}(bk)=\chi_{s,n}(b)\delta^{1/2}(b)\tilde{f}(k).
\end{equation*}
One readily checks that $\tilde{f}$ belongs to $\pi_{s,n}=\mathrm{Ind}_B^G(\chi_{s,n}\cdot\delta^{1/2})$ (see Equation (\ref{funcsn})), and that this assignment is an isomorphism.
\end{proof}

For our specific case, recalling that the action of $K\cong \mathrm{SO}_3$ on $\wedge^1\mathfrak{p}$ is the complexification of the standard representation on $\mathbb{R}^3$, we have the isomorphism
\begin{equation*}
\wedge^1\mathfrak{p}|_M\cong \chi_1\oplus \chi_{-1}\oplus \chi_0
\end{equation*}
as representations of $M\cong \mathrm{SO}_2\subset \mathrm{SO}_3$. Here we use that the complexification of the standard action of $\mathrm{SO}_2$ on $\mathbb{R}^2$ is isomorphic to $\chi_1\oplus \chi_{-1}$. Furthermore, as each $\chi_n$ is $1$-dimensional, we readily check
\begin{equation*}
\dim_{\mathbb{C}} \mathrm{Hom}_M(\chi_n,\chi_m)=
\begin{cases}
1\text{ if }n=m\\
0\text{ otherwise.}
\end{cases}
\end{equation*}
Hence Lemma \ref{frobeniusreciprocity} implies that the representation $\pi_{s,n}$ (where we use the parametrization of Proposition \ref{allirrep}) contains $\wedge^1\mathfrak{p}$-isotypic vectors if and only if $n=0$ or $1$, in which case the space of such vectors is $1$-dimensional. Notice also that for the trivial representation $\mathbf{1}$ we clearly have
\begin{equation*}
\mathrm{Hom}_K(\wedge^1\mathfrak{p},\mathbf{1})=0.
\end{equation*}
Therefore, in the case of $1$-forms Matsushima's Lemma can be simplified to
\begin{equation}\label{mats1}
L^2 \Omega^1 = \bigoplus_{\substack{s\in i\mathbb{R}\\n=0, 1}} m_\Gamma(\pi_{s,n}) \cdot \mathrm{Hom}_K(\wedge^1 \mathfrak{p}, \pi_{s,n}).
\end{equation}
The proof of Proposition \ref{isococlosed} is then completed by the next lemma.
\begin{lemma} \label{exactvscoclosed}
Vectors in $\mathrm{Hom}_K(\wedge^1 \mathfrak{p}, \pi_{s,0})$ correspond to exact $1$-forms, while vectors in $\mathrm{Hom}_K(\wedge^1 \mathfrak{p}, \pi_{s,1})$ correspond to coclosed $1$-forms.
\end{lemma}

Before proving the lemma, we need to discuss the Hodge star $\ast$ and the exterior derivative $d$ in a representation theoretic framework. For an irreducible subrepresentation $\pi \subset L^2(\Gamma \backslash G),$ we have the commutative diagram

$$\begin{CD}
\mathrm{Hom}_K(\wedge^k \mathfrak{p}, \pi) @>>> \mathrm{Hom}_K(\wedge^k \mathfrak{p}, L^2(\Gamma \backslash G)) = L^2 \Omega^k  \\
@V{\ast_\pi}VV @VV{\ast}V \\
\mathrm{Hom}_K(\wedge^{3-k} \mathfrak{p}, \pi) @>>> \mathrm{Hom}_K(\wedge^{3-k} \mathfrak{p}, L^2(\Gamma \backslash G)) = L^2 \Omega^{3-k}
\end{CD}$$
where the horizontal arrows are inclusions, $\ast$ is the Hodge star on $Y$, and $\ast_{\pi}$ is the following: recalling that $\mathfrak{p}$ is identified with $T_{(0,1)}\mathbb{H}^3\otimes\C,$ the operator $\ast_{\pi}$ is defined by precomposition with $\ast$ applied to $T_{(0,1)}\mathbb{H}^3\otimes\C.$ Similarly, we have the diagram

$$\begin{CD}
\mathrm{Hom}_K(\wedge^k \mathfrak{p}, \pi) @>>> \mathrm{Hom}_K(\wedge^k \mathfrak{p}, L^2(\Gamma \backslash G)) = L^2 \Omega^k  \\
@V{d_\pi}VV @VV{d}V \\
\mathrm{Hom}_K(\wedge^{k+1} \mathfrak{p}, \pi) @>>> \mathrm{Hom}_K(\wedge^{k+1} \mathfrak{p}, L^2(\Gamma \backslash G)) = L^2 \Omega^{k+1}
\end{CD}$$
where the horizontal arrows are inclusions, $d$ is the exterior derivative and $d_{\pi}$ is the representation theoretic version of $d$. The explicit formula for $d_{\pi}$ mirrors the invariant formula 
\begin{align*}
d\omega(X_0, \dots , X^k) &= \sum_{i = 0}^k (-1)^i \; X_i \cdot \omega(X_0 , \dots , \widehat{X_i} , \dots , X_q ) \\
&+ \sum_{i < j} (-1)^{i+j} \; \omega( [X_i,X_j] , \dots , \widehat{X_i} , \dots , \widehat{X_j} , \dots ,{X_q})
\end{align*}
for the exterior derivative $d$ evaluated at smooth vector fields $X_0, \ldots, X_q.$  See \cite[Chapter 1, \S 1]{BW} for further details. 

\begin{proof}[Proof of Lemma \ref{exactvscoclosed}]
We have that $\wedge^0\mathfrak{p}\cong\C$ is the trivial representation of $K$, hence $\wedge^0\mathfrak{p}\lvert_M\cong\chi_0$ as representations of $M$. As $m_{\Gamma}(\mathbf{1})=1$ (see Remark \ref{mult1}), we therefore obtain the following analogue of \eqref{Matsushima} for functions:
\begin{equation}\label{mats1}
L^2 \Omega^0 =\mathrm{Hom}_K(\wedge^0\mathfrak{p},\mathbf{1})\oplus \left(\bigoplus m_\Gamma(\pi_{s,0}) \cdot \mathrm{Hom}_K(\wedge^0 \mathfrak{p}, \pi_{s,0})\right),
\end{equation}
where the sum runs over the relevant parameters $s$, and each isotypic summand is $1$-dimensional. In particular, for every $s$, $\mathrm{Hom}_K(\wedge^0 \mathfrak{p}, \pi_{s,0})$ appears in the decomposition of $L^2\Omega^0$ with the same multiplicity as $\mathrm{Hom}_K(\wedge^1 \mathfrak{p}, \pi_{s,0})$ appears in the decomposition of $L^2\Omega^1$.
\par
Now, for a given $\pi$, the operator
\begin{equation*}
d_{\pi}: \mathrm{Hom}_K(\wedge^0 \mathfrak{p}, \pi) \rightarrow \mathrm{Hom}_K(\wedge^1 \mathfrak{p}, \pi)
\end{equation*}
acts on
\begin{equation*}
\omega:\wedge^0 \mathfrak{p}\cong \C\rightarrow\pi
\end{equation*}
in the following way:
\begin{align*}
d_{\pi}(\omega)&:\wedge^1 \mathfrak{p}\rightarrow\pi\\
d_{\pi}(\omega)(X)&= \pi(X)\cdot\omega(1).
\end{align*}
Of course, $d_{\mathbf{1}}$ is the zero map. We claim that $d_{\pi_{s,0}}$ is injective for every representation $\pi_{s,0}$ (this is the representation-theoretic incarnation of the basic fact that the differential of a non-constant function is not identically zero). In fact, $\omega(1)\in\pi_{s,0}$ is a $K$-invariant, hence it is annihilated by $\mathfrak{k}$\footnote{Notice that $\mathfrak{k}$ is a gothic $k$, and denotes the Lie algebra of $K$.}; if we had $d_{\pi_{s,0}}(\omega)=0$, $\omega(1)$ would be annihilated by $\mathfrak{p}$ too, hence by all of $\mathfrak{g}$ since $\mathfrak{g} = \mathfrak{p}_0 \oplus \mathfrak{k}.$ This is a contradiction, because $\C\cdot\omega(1)\subset \pi_{s,0}$ would then be a $G$-subrepresentation. We conclude that $d_{\pi_{s,0}}$ is an isomorphism (as both domain and target space are $1$-dimensional) and therefore all vectors in $\mathrm{Hom}_K(\wedge^1 \mathfrak{p}, \pi_{s,0})$ correspond to exact 1-forms by the commutativity of the diagram relating $d$ and $d_{\pi_{s,0}}.$

For the second part of the statement, since $d^\ast$ equals $\ast d \ast$ up to sign, by the commutativity of the above diagrams it suffices to show that the composition
\begin{equation*}
\mathrm{Hom}_K( \wedge^1 \mathfrak{p}, \pi_{s,1}) \xrightarrow{\ast_{\pi_{s,1}}}  \mathrm{Hom}_K( \wedge^2 \mathfrak{p}, \pi_{s,1}) \xrightarrow{d_{\pi_{s,1}}}  \mathrm{Hom}_K( \wedge^3 \mathfrak{p}, \pi_{s,1}) \xrightarrow{\ast_{\pi_{s,1}}}  \mathrm{Hom}_K( \wedge^0 \mathfrak{p}, \pi_{s,1})  
\end{equation*}
vanishes.  This is indeed true because the target $\mathrm{Hom}_K( \wedge^0 \mathfrak{p}, \pi_{s,1})= \mathrm{Hom}_M( \wedge^0 \mathfrak{p}|_M, \chi_1)$
vanishes.
\end{proof}

\vspace{0.3cm}
\subsection{Irreducible representations and the Hodge Laplacian} \label{HodgeLaplacian}

In this section we discuss the precise sense in which the decomposition of Matsushima's Lemma (Lemma \ref{Matsushima}) can be interpreted in terms of the eigenspace decomposition of the Hodge Laplacian. The main result is the following.

\begin{prop}\label{spectraldecrep}
Referring to the decomposition of Proposition \ref{isococlosed}, each isotypic summand $\mathrm{Hom}_K(\wedge^1\mathfrak{p}, \pi_{s,1})\cong \C$ corresponds to a one dimensional eigenspace of the Hodge Laplacian on coclosed $1$-forms on $Y$ with eigenvalue $-s^2\in\mathbb{R}^{\geq0}$.
\end{prop}

The rest of this subsection is dedicated to its proof.

\vspace{0.3cm}
\subsubsection{Killing forms and the hyperbolic metric.} We set our notation and conventions for the Killing forms.
Recall that the Lie algebra $\mathfrak{g}$ is $\mathfrak{gl}_2(\C)/\mathbb{C}$. We will consider the basis over $\mathbb{C}$ consisting of
\begin{equation*}
H = \left( \begin{array}{cc} 1 & 0 \\ 0 & 0 \end{array} \right), E = \left( \begin{array}{cc} 0 & 1 \\ 0 & 0 \end{array} \right), F = \left( \begin{array}{cc} 0 & 0 \\ 1 & 0 \end{array} \right),
\end{equation*}
which satisfies the commutation relations
\begin{equation}\label{commg}
[H,E]=E,\quad [H,F]=-F,\quad [E,F]=2H.
\end{equation}
\begin{remark}\label{aboutH}
The element $H$ above is not the standard element used for the basis of $\mathfrak{sl}_2(\C)$. Under the natural identification $\mathfrak{g}\cong\mathfrak{sl}_2(\C)$, it corresponds to $\left( \begin{array}{cc} 1/2 & 0 \\ 0 & -1/2 \end{array} \right)$.
\end{remark}
In what follows, we consider $\mathfrak{g}$ as a real Lie algebra; a basis over $\mathbb{R}$ is given by
\begin{equation}\label{rbasis}
H,\; iH,\; E,\; iE,\; F,\; iF.
\end{equation}
Consider the Killing form
$$B(X,Y) := \mathrm{trace}(\mathrm{ad}(X)_\R \circ \mathrm{ad}(Y)_\R).$$
We write the subscript $\R$ to emphasize that we must view $\mathrm{ad}(X),\mathrm{ad}(Y)$ as $\R$-linear transformations of the complex vector space $\mathfrak{g}.$
\begin{comment}
A direct check shows that
$$B(X,Y) = 2\cdot \mathrm{Re} \; \mathrm{trace}(\mathrm{ad}(X)_\C \circ \mathrm{ad}(Y)_\C)$$
where the subscript $\C$ emphasizes that, in the second formula for $B(X,Y),$ we must view $\mathrm{ad}(X)$ and $\mathrm{ad}(Y)$ as $\C$-linear transformations of the $\C$-vector space $\mathfrak{g}.$
\end{comment}
The Killing form $B$ is non-degenerate (as $B$ is semisimple), and induces a positive-definite inner product on $\mathfrak{p}_0=T_{(0,1)}\mathbb{H}^3$:
$$\langle X,Y \rangle_0 = -B(X,Y).$$
The inner product $\langle \cdot,\cdot \rangle_0$ is $K$-invariant and thus propogates to an invariant metric on all of $\mathbb{H}^3.$  We call this metric $g_{\mathrm{Killing}}.$  Notice that $\frac{1}{4}g_{\mathrm{Killing}}$ equals the standard curvature $-1$ metric on $\mathbb{H}^3.$ Indeed, this is a metric of constant negative curvature \cite[Section 7.G]{Besse}. Furthermore, one can check the normalization by noticing that $H\in\mathfrak{p}_0$ (for which $\|H\|_{\mathrm{Killing}}=2$) generates the one-parameter family
\begin{equation*}
e^{tH}=\left(\begin{array}{cc} e^{t/2} & 0 \\ 0 & e^{-t/2} \end{array} \right)\in\mathrm{PSL}(2,\mathbb{C})
\end{equation*}
(see Remark \ref{aboutH}). Via Equation (\ref{upperaction}), this corresponds in $\mathbb{H}^3=G/K$ (where again $K$ is the stabilizer of $(0,1)$) to the geodesic $(0, e^t)$, whose tangent vector at $t=0$ has length $1$.

\vspace{0.3cm}
\subsubsection{Differential operators and representation theory.}
Recall that the Lie algebra $\mathfrak{g}$ consists of the set of left-invariant vector fields on $G$, and can be therefore thought as the set of left-invariant first order differential operators on $G$. From this viewpoint, the universal enveloping algebra $U(\mathfrak{g})$ of $\mathfrak{g}$ \cite[Section III$.1$]{Knapp} corresponds to the set of of general left invariant differential operators on $G$. Because $\mathfrak{g}$ is semisimple, there is a distinguished element of the center of $U(\mathfrak{g})$ called the \textit{Casimir element} \cite[Section V.4]{Knapp}. It is defined as
\begin{equation*}
C=\sum_{i=1}^6 X_iX_i^{\vee}\in Z(U(\mathfrak{g}))
\end{equation*}
where $\{X_i\}$ is any $\mathbb{R}$-basis of $\mathfrak{g}$, and $\{X_i^\vee\}$ is the dual basis with respect to the Killing form $B$ (which is non-degenerate). Its definition is independent of the choice of basis. The fact that $C$ is in the center of $U(\mathfrak{g})$ has two direct consequences:
\begin{itemize}
\item The corresponding left-invariant differential operator on $G$ is in fact bi-invariant, and descends therefore to a differential operator on $Y=\Gamma\setminus G/K$.
\item By Schur's lemma, $C$ acts on a given irreducible representation $\pi$ of $G$ by a scalar. We denote this value by $C(\pi)$, the \textit{Casimir eigenvalue} of $\pi$.
\end{itemize}

The next result puts these two observations together to show that the decomposition from Matsushima's Lemma \ref{Matsushima} is a refinement of the spectral decomposition of the Hodge Laplacian on $k$-forms.

\begin{prop}[Kuga's Lemma, \cite{BC}, Lemme 1.1.1] \label{kuga}
Given an irreducible representation $\pi \subset L^2(\Gamma \backslash G),$ every $\wedge^k \mathfrak{p}$-isotypic vector in $\pi$ corresponds to a $k$-eigenform of the Hodge Laplacian on $Y$ of eigenvalue $\lambda = -C(\pi)$, the Casimir eigenvalue of $\pi$. Furthermore, every $k$-eigenform of the Hodge Laplacian on $Y$ of eigenvalue $\lambda$ arises in this way. Here, the Hodge Laplacian is the one corresponding to the metric $g_{\mathrm{Killing}}$ on $\mathbb{H}^3$.
\end{prop}

Given this, in order to prove Proposition \ref{spectraldecrep}, all we need to do is compute the Casimir eigenvalue of the representations $\pi_{s,1}$, which can done directly as follows (we will perform the computation for general $(s,n)$, as it is identical).
\par
Calculating from the definition using the $\mathbb{R}$-basis \eqref{rbasis} and the commutation relations \eqref{commg}, we can write
\begin{equation} \label{casimirPGL2C}
C  = \frac{1}{4} H \cdot H - \frac{1}{4} H - \frac{1}{4} (iH) \cdot (iH) - \frac{1}{4} H + E \cdot (\text{extra term}) + (iE) \cdot (\text{extra term}),
\end{equation}
where the extra terms belong to $\mathfrak{g} \subset U(\mathfrak{g}).$  Consider a smooth function $f$ in $\pi_{s,n}$ with $f(1)\neq 0$; recall that this function satisfies
\begin{equation}\label{equivariancesn}
f(bg) = \delta(b)^{1/2} \chi_{s,n}(b) f(g)\text{ for all }b \in B.
\end{equation}
By definition, the Casimir eigenvalue satisfies $Cf(1)=C(\pi_{s,n})f(1)$, so it suffices to evaluate $Cf(1).$

Note that for all $T \in U(\mathfrak{g}),$ the equivariance property \eqref{equivariancesn} implies that
\begin{align*}
(ET \cdot f)(1) &= E (Tf)(1) \\
&= \frac{d}{dt}\lvert_{t = 0} (Tf)( e^{t E}) \\
&= \frac{d}{dt}\lvert_{t = 0} (Tf)(1)=0
\end{align*}
because both $\delta$ and $\chi_{s,n}$ are trivial on the one parameter family
\begin{equation*}
e^{t E}=\left(\begin{array}{cc}1& t \\0&1\end{array}\right),\quad t\in\mathbb{R}.
\end{equation*}
Similarly, $((iE)T \cdot f)(1) = 0.$  By \eqref{casimirPGL2C} we then compute,
\begin{align*}
(Cf)(1) &= \left( \left(\frac{1}{4} H \cdot H - \frac{1}{2} H - \frac{1}{4} (iH) \cdot (iH) \right) \cdot f \right) (1) \\
&= \frac{1}{4} \frac{\partial^2}{\partial u \partial t}|_{(0,0)} f( e^{uH} e^{tH}) - \frac{1}{2} \frac{\partial}{\partial t}|_{0} f(e^{tH}) - \frac{1}{4} \frac{\partial^2}{\partial u \partial t}|_{(0,0)} f(e^{t iH} e^{u iH}) \\
&= \frac{1}{4} [ (s+1)^2 - 2(s+1) + n^2 ] f(1) \\
&= \frac{1}{4}( s^2 + n^2 - 1) f(1).
\end{align*}
Therefore we conclude that the Casimir eigenvalue equals
\begin{equation}\label{casimireigen}
C(\pi_{s,n}) = \frac{1}{4}( s^2 + n^2 - 1).
\end{equation}
In particular, by Kuga's Lemma, the $\wedge^1 \mathfrak{p}$-isotypic vector in $\pi_{s, 1}$ corresponds to a coclosed $1$-eigenform on $\Gamma \backslash \mathbb{H}^3$ of Laplace eigenvalue $- \frac{1}{4} s^2,$ when $\mathbb{H}^3$ is endowed with the metric $g_{\mathrm{Killing}}.$  In the standard curvature $-1$ metric on $\mathbb{H}^3,$ this eigenvalue becomes $-s^2,$ and Proposition \ref{spectraldecrep} is proved.
\begin{remark}
From Equation \eqref{casimireigen}, by setting $n=0$ we also recover the more classical fact that the relevant spectral parameter for the Laplacian on functions on a hyperbolic three-manifold is $1+r^2$, with $r\in \mathbb{R}^{\geq 0}\cup i[0,1]$, see \cite{Sarnak}. Here the number $1$ is the bottom of the $L^2$-spectrum of the Laplacian on functions on $\mathbb{H}^3$, cf. Remark \ref{onequart}.
\end{remark}

\vspace{0.3cm}
\subsection{Choosing test functions to isolate coclosed $1$-forms}\label{iso1form}
We are now in good shape to specialize the trace formula of Corollary \ref{preliminarygeometrictraceformula} to a formula only involving the spectrum on coclosed $1$-forms: we learned from Proposition \ref{isococlosed} that only representations of the form $\pi_{s,1}$, $s\in\mathbb{R}$ contribute to the coclosed 1-form spectrum, and from Proposition \ref{spectraldecrep} that each copy of this representation corresponds to a one dimensional eigenspace of eigenvalue $-s^2$. To isolate the contribution of these representations, the natural candidate are functions of the form
$$F(u,\theta) = H(u) \cos \theta$$
where $H$ an even, compactly supported, \emph{$\R$-valued} function on $\R$. In fact, one readily computes, denoting $s=it$ for $t\in\mathbb{R}$,
\begin{equation*}
\widehat{F}(\chi_{it,n}^{-1}) = \begin{cases} \frac{1}{2}\widehat{H}(-t) & \text{ if } n = \pm 1 \\ 0 & \text{ otherwise.} \end{cases}
\end{equation*}
We will now unravel in terms of this function each term of the formula in Corollary \ref{preliminarygeometrictraceformula}.

\subsubsection{The non-trivial representations}
By Proposition \ref{spectraldecrep} we have
\begin{equation*}
m_{\Gamma}(\pi_{it,1}) + m_\Gamma(\pi_{-it,1}) =: m_\Gamma( t^2 ),
\end{equation*}
the multiplicity of $t^2$ in the spectrum on coclosed $1$-forms. Using that $\widehat{H}$ is an even function, the first term in the formula of Corollary \ref{preliminarygeometrictraceformula} is therefore
\begin{equation*}
\frac{1}{2}\sum m_{\Gamma}(t^2)\widehat{H}(t)
\end{equation*}
where the sum runs over the values for which $t^2$ is a coclosed eigenvalue on $1$-forms.

\subsubsection{The trivial representation}
Setting as usual, $z = e^{u + i \theta},$ the contribution of the trivial representation to the geometric trace formula for the test function $F$ equals
\begin{comment}
\begin{align*}
&\frac{1}{|W|}\int_T |D(t^{-1})|^{1/2} F(t) dt= \\ 
&=\frac{1}{2} \cdot \frac{1}{2\pi} \int \int|1-z| \cdot |1-z^{-1}|\cdot H(u) \cdot \cos\theta \; du \; d\theta \hspace{0.5cm} \text{because } |W| = 2 \\
&= \frac{1}{2} \cdot \frac{1}{2\pi} \int \int |z| \cdot |1-z^{-1}|^2 \cdot H(u) \cdot \cos\theta \; du \; d\theta \\
&= \frac{1}{2} \cdot \frac{1}{2\pi} \int \int e^u \cdot (1-e^{-u-i\theta}) \cdot \overline{ (1-e^{-u-i\theta})} \cdot H(u) \cdot \cos\theta \; du \; d\theta \\
&= \frac{1}{2} \cdot \frac{1}{2\pi} \int \int (e^u + e^{-u} - 2 \cos \theta)  \cdot \cos\theta \cdot H(u) \; d\theta \; du  \\
&= -\frac{1}{2} \int H(u) \; du \\
&= - \frac{1}{2} \widehat{H}(0).
\end{align*}
\end{comment}
\begin{align*}
&\frac{1}{|W|}\int_T |D(t^{-1})|^{1/2} F(t) dt= \\ 
&=\frac{1}{2} \cdot \frac{1}{2\pi} \int \int|1-z| \cdot |1-z^{-1}|\cdot H(u) \cdot \cos\theta \; du \; d\theta \hspace{0.5cm} \text{because } |W| = 2 \\
&= \frac{1}{2} \cdot \frac{1}{2\pi} \int \int (e^u + e^{-u} - 2 \cos \theta)  \cdot \cos\theta \cdot H(u) \; d\theta \; du  \\
&= -\frac{1}{2} \int H(u) \; du \\
&= - \frac{1}{2} \widehat{H}(0)
\end{align*}
where we computed $|1-z| \cdot |1-z^{-1}|=e^u + e^{-u} - 2 \cos \theta$.

\subsubsection{The identity contribution}
The identity contribution to the trace formula for the test function $F$ is
\begin{align*}
&= -c \cdot \vol(Y) \cdot \left(  \frac{d^2}{du^2} + \frac{d^2}{d\theta^2} \right) F |_{(u,\theta) = (0,0)} \\
&=  -c \cdot \vol(Y) \cdot \left(  \frac{d^2}{du^2} + \frac{d^2}{d\theta^2} \right) ( H(u) \cos \theta) |_{(u,\theta) = (0,0)} \\
&= c \cdot \vol(Y) \cdot (H(0) - H''(0)),
\end{align*}
where $c$ is the constant from Proposition \ref{plancherel}.

\subsubsection{The sum over closed geodesics}
Finally, setting as usual
\begin{equation*}
t_\gamma = \left( \begin{array}{cc} e^{u + i \theta} & 0 \\ 0 & 1 \end{array} \right) \in T
\end{equation*}
where $u+i\theta=\C \ell(\gamma)$, each closed geodesics in the sum contributes
\begin{align*}
&\ell(\gamma_0) \cdot |D(t_\gamma^{-1})|^{-1/2} \cdot F(t_\gamma) \\
=& \ell(\gamma_0) \cdot \left(  |1 - e^{u+i\theta}| \cdot |1 - e^{-(u+i\theta)}| \right)^{-1} \cdot H(u) \cos \theta \\
=& \ell(\gamma_0) \cdot \left( | 1 - e^{\C \ell(\gamma)} | \cdot |1 - e^{- \C \ell(\gamma)} | \right)^{-1} \cdot H(\ell(\gamma)) \cdot \cos( \mathrm{hol}(\gamma) ).
\end{align*}
\vspace{0.3cm}
\subsection{Conclusion of the proof}\label{theend}
Combining all the computations from the previous subsection, and noticing that coclosed $1$-eigenforms with eigenvalue $0$ are harmonic $1$-forms, we obtain the following.
\begin{cor}[Theorem \ref{geometrictraceformulacoexact1formsnew}, up to a costant $c$ to be determined]\label{preliminarygeometrictraceformulacoexact1forms}
Let $H$ be any smooth, compactly supported, even, $\R$-valued function on $\R.$  There is an equality
\begin{align*}
&{} \sum_{\lambda^\ast = \text{coexact 1-form eigenvalue}} \frac{1}{2} m_\Gamma(\lambda^*) \cdot \widehat{H} \left( \sqrt{\lambda^\ast} \right) + \left( \frac{1}{2} b_1\left( Y \right)- \frac{1}{2} \right) \widehat{H}(0) \\
&=  c \cdot \vol(Y) \cdot \left(  H(0) - H''(0) \right) + \sum_{[\gamma] \neq 1} \ell(\gamma_0) \cdot \frac{\cos( \mathrm{hol}(\gamma))}{ |1 - e^{\C \ell(\gamma)} | \cdot |1 - e^{-\C \ell(\gamma)}|} \cdot H(\ell(\gamma)). 
\end{align*}
In the above formula, $c$ is the constant from Proposition \ref{plancherel}.
\end{cor}
To conclude the proof of Theorem \ref{geometrictraceformulacoexact1formsnew}, we only need to evaluate the missing constant $c$. Using Weyl's law, which we now recall, this can be done by choosing suitable functions for which the main contribution from the geometric side comes from the term involving the volume. Denote by $N_{d^\ast}(X)$ the number of coexact 1-form eigenvalues on $Y = \Gamma \backslash \mathbb{H}^3$ satisfying $\sqrt{\lambda^*} \leq X$ (counted with multiplicity). Recall that the spectrum of the Laplacian on $1$-forms is the union of the spectrum on coclosed $1$-forms, and the non-trivial spectrum on functions. Therefore, applying the general Weyl law for vector bundles \cite[Corollary 2.43]{BGV}, we have for $X$ very large
\begin{equation*}
N_{d^\ast}(X)= 2\cdot\frac{ \vol(Y)}{(4\pi)^{3/2} \Gamma(3/2+1)} X^3+o(X^3)
\end{equation*}
where the first coefficient $2=3-1$ is the difference between the dimension of the bundle of $1$-forms and the bundle of functions.
\par
Fix a smooth, compactly supported, real valued even test function $H$ with $H(0) \neq 0$ and positive Fourier transform (see Section \ref{bookersec} for some concrete examples).  Let $H_\nu = H \cdot (e^{i\nu u} + e^{-i\nu u}).$  We integrate the spectral side of the trace formula from Corollary \ref{preliminarygeometrictraceformulacoexact1forms} for the test function $H_\nu$ over $\nu \in [-X,X]$. As $\widehat{H}_{\nu}(t)=\widehat{H}(t-\nu)+\widehat{H}(t+\nu)$, a parameter $\sqrt{\lambda^*}$ contributes
\begin{equation*}
\frac{1}{2}\int_{[-X,X]} \widehat{H}(\sqrt{\lambda^*}-\nu)+\widehat{H}(\sqrt{\lambda^*}+\nu)d\nu
\end{equation*}
to the sum. This is close to $2\pi H(0)$ for $X$ very large compared to $\sqrt{\lambda^*}$ by the Fourier inversion formula
\begin{equation*}
\int_{\mathbb{R}}\widehat{H}(t)dt=2\pi H(0).
\end{equation*}
From this, one obtains with some basic estimates the asymptotic for $X$ very large
\begin{align*}
\int_{\nu \in [-X,X]} \text{spectral side for } H_\nu \; d\nu = 2\pi  H(0)\cdot\underbrace{\frac{2 \cdot \vol(Y)}{(4\pi)^{3/2} \Gamma(3/2+1)} X^3}_{\text{leading term of }N_{d^\ast}(X)}+o(X^3).
\end{align*}
On the other hand, the main contribution from the geometric side clearly comes from integrating $-H''_{\nu}(0)= 2\cdot H(0)\cdot\nu^2$, and we compute for $X$ very large
\begin{align*}
\int_{\nu \in [-X,X]} \text{geometric side for } H_\nu \; d\nu &= c \cdot \vol(Y) \underbrace{\cdot 2 \cdot H(0) \cdot \frac{2X^3}{3}}_{\int_{-X}^X-H''_{\nu}(0)d\nu}+o(X^3).
\end{align*}
Equating the above two asymptotic expansions yields $c = \frac{1}{2\pi}$, and the proof of Theorem \ref{geometrictraceformulacoexact1formsnew} is completed.

%\begin{thm} \label{geometrictraceformulallimit}
%Let $\delta > 5/2.$  Let $H$ be an even, compactly supported, $\R$-valued test function satisfying $ \int_{\mathbb{R}} \left(  \left| \widehat{H}(t) \right|^2 + \left|  \widehat{H}'(t) \right|^2 \right) \left( \sqrt{1 + t^2} \right)^{2\delta} < \infty.$  Then the trace formula from Corollary \ref{geometrictraceformulacoexact1forms} is valid for $H.$  
%%In particular, if $H$ is 3-times continuously differentiable, then the trace formula from Corollary \ref{geometrictraceformulacoexact1forms} is valid for $H.$
%\end{thm}
%\begin{proof}
%See \S \ref{limitargument}.
%\end{proof}

\vspace{0.5cm}

\section{Limiting argument: Proof of Theorem \ref{geometrictraceformulallimitnew}} \label{limitargument}

\begin{proof}
Let $\delta > \frac{5}{2}.$  Express $\delta = \alpha + \beta$ with $\alpha \geq 1$ and $\beta > \frac{3}{2}.$  Let $\langle t \rangle := (1 + t^2)^{\frac{1}{2}}.$  Then
\begin{align*}
\left| \sum_n \widehat{H}(t_n) \right| &= \left|  \sum \widehat{H}(t_n) \langle t_n \rangle^{\alpha + \beta} \langle t_n \rangle^{-\alpha} \langle t_n \rangle ^{-\beta} \right| \\
&\leq \sqrt{\sum \left| \widehat{H}(t_n) \langle t_n \rangle^{\alpha + \beta} \right|^2 \langle t_n \rangle ^{-2\alpha} } \cdot \sqrt{ \sum \langle t_n \rangle^{-2\beta}  } \hspace{0.5cm} \text{by Cauchy-Schwarz}. \\ 
\end{align*}
By the local Weyl law (cf. \cite[Lemma 2.3]{DG} and \cite[Lemma 2.2]{Muller}), 
\begin{equation} \label{localweyl} 
\# \{ t_n \in [N, N+1]  \} \leq D N^2
\end{equation}
for some $D > 0.$  Since $2\beta > 3,$ it follows that the second summand above is convergent.  Say it equals $C.$  We continue:

\begin{align*}
&=  \sqrt{C} \cdot \sqrt{\sum \left| \widehat{H}(t_n)\langle t_n \rangle^{\alpha + \beta} \right|^2 \langle t_n \rangle^{-2\alpha} } \\
&\leq  \sqrt{C} \cdot \sqrt{\sum_{N = 0}^{\infty} \sup_{t \in [N, N+1]} \left| \widehat{H}(t)\langle t \rangle^{\alpha + \beta} \right|^2 \cdot \left( \langle N \rangle^{-2\alpha} \cdot \# \left\{ t_n \in [N,N+1] \right\} \right) } \\
&\leq \sqrt{C}  \sqrt{D} \cdot \sqrt{\sum_{N = 0}^{\infty} \sup_{t \in [N, N+1]} \left| \widehat{H}(t)\langle t \rangle^{\alpha + \beta} \right|^2 } \hspace{0.5cm} \text{by \eqref{localweyl} because } \alpha \geq 1.
\end{align*}

There is also the Sobolev inequality
\begin{equation} \label{sobolevinequalityinterval}
\sup_{t \in [a,b] } |G(t)|^2 \leq E \cdot \left( ||G||^2_{L^2[a,b]} + || G' ||^2_{L^2[a,b]} \right)
\end{equation}
for all smooth functions $G$ on $[a,b]$ and some constant $E$ uniform in $b-a$. Applying this to $G=\widehat{H}(t)\langle t \rangle^{\alpha + \beta},$ we have 
\begin{align*}
|| G' ||^2_{L^2[N,N+1]} &\leq || \frac{d}{dt} \widehat{H}(t) \langle t \rangle^{\alpha + \beta} ||^2_{L^2[N,N+1]} + ||  \widehat{H}(t) \frac{d}{dt}\langle t \rangle^{\alpha + \beta} ||^2_{L^2[N,N+1]} \\
&\leq  || \frac{d}{dt} \widehat{H}(t) \langle t \rangle^{\alpha + \beta} ||^2_{L^2[N,N+1]} + F || \widehat{H}(t) \langle t \rangle^{\alpha + \beta} ||^2_{L^2[N,N+1]},
\end{align*}
for some constant $F \geq 1$ independent of $N.$  We continue

%where the last term can be controlled up to a constant (independent of $N$) by $|| \widehat{H}(t) \langle t \rangle^{\alpha + \beta} ||^2_{L^2[N,N+1]}$, we continue \textcolor{red}{need to add a new constant below}

\begin{align*}
&\leq \sqrt{C}  \sqrt{D} \sqrt{E} \sqrt{F} \cdot \sqrt{ \sum_{N = 0}^{\infty} || \widehat{H}(t) \langle t \rangle^{\alpha + \beta} ||^2_{L^2[N,N+1]} + || \frac{d}{dt} \widehat{H}(t) \langle t \rangle^{\alpha + \beta} ||^2_{L^2[N,N+1]}  } \\
&\leq  \sqrt{C}  \sqrt{D} \sqrt{E} \sqrt{F} \cdot \sqrt{ \sum_{N = 0}^{\infty} || \widehat{H}(t) \langle t \rangle^{\alpha + \beta} ||^2_{L^2[N,N+1]} + || \frac{d}{dt} \widehat{H}(t) \langle t \rangle^{\alpha + \beta} ||^2_{L^2[N,N+1]}  } \\
&\leq  \sqrt{C}  \sqrt{D} \sqrt{E} \sqrt{F} \cdot \sqrt{ || \widehat{H} \langle t \rangle^{\alpha + \beta} ||^2_{L^2(\mathbb{R})} + || \frac{d}{dt} \widehat{H} \langle t \rangle^{\alpha + \beta} ||^2_{L^2(\mathbb{R})} } \\
&\leq \sqrt{C} \sqrt{D} \sqrt{E} \sqrt{F} \sqrt{2 \delta}  \sqrt{ \int_{\mathbb{R}} \left(  \left| \widehat{H}(t) \right|^2 + \left|  \widehat{H}'(t) \right|^2 \right) \langle t \rangle^{2\delta} }.
\end{align*}
Thus, the linear functional
\begin{equation} \label{regularspectrallinearfunctional}
H \mapsto \sum \widehat{H}(t_n),
\end{equation}
corresponding to the regular spectral contribution to the trace formula for the test function $H,$ is continuous in the (weighted) Sobolev space $\mathcal{S}$ defined by the norm
$$|| H ||_{\mathcal{S}} := \sqrt{ \int_{\mathbb{R}} \left(  \left| \widehat{H}(t) \right|^2 + \left|  \widehat{H}'(t) \right|^2 \right) \langle t \rangle^{2\delta} }.$$
It is also readily checked (using again the Sobolev inequalities) that the linear functionals
\begin{align} \label{traceformulafunctionals} 
H &\mapsto H(0) - H''(0)\nonumber \\
H &\mapsto \widehat{H}(0)\nonumber \\
H &\mapsto \sum_{1 \neq \gamma, \ell(\gamma) \leq R} c_{\gamma} H(\ell(\gamma)),
\end{align}
corresponding respectively to the identity contribution, and the trivial representation contribution, and the regular geometric contribution to the trace formula for the test function $H,$ are continuous on $\mathcal{S}.$  Indeed,

\begin{itemize}
\item
To bound the identity contribution in terms of $|| H ||_{\mathcal{S}},$ begin by noting that 
$H(0) - H''(0) = \frac{1}{2\pi} \int_R \widehat{H}(t) (1 + t^2) dt$ by Fourier inversion.  So,
\begin{align*}
|H(0) - H''(0)| &\leq \frac{1}{2\pi} \int_R | \widehat{H}(t) | \langle t \rangle^2 dt \\
& \leq \frac{1}{2\pi} \int_R | \widehat{H}(t) | \langle t \rangle^{\delta} \langle t \rangle^{2-\delta} dt \\
&\leq \frac{1}{2\pi} \left(\leq \int_R | \widehat{H}(t) |^2 \langle t \rangle^{2\delta} dt \right)^{1/2} \cdot \left( \int_R \langle t \rangle^{4-2\delta} dt \right)^{1/2} \hspace{0.5cm} \text{by Cauchy-Schwarz} \\
&\leq C' \cdot  || H ||_{\mathcal{S}},
\end{align*}
where $C' = \frac{1}{2\pi} \left( \int_R \langle t \rangle^{4-2\delta} dt \right)^{1/2}.$   Note that because $\delta > 5/2,$ the latter integral converges.

\item
To bound the regular geometric contribution in terms of $|| H ||_{\mathcal{S}}$-norm, use Fourier inversion
once again: $H(\ell) = \frac{1}{2\pi} \int_R \widehat{H}(t) e^{it.\ell} dt.$  Then 
\begin{align*}
|H(\ell)| &\leq \frac{1}{2\pi} \int_R |\widehat{H}(t) e^{it.\ell}| dt \\
&= \frac{1}{2\pi} \int_R |\widehat{H}(t)| dt \\
&\leq C' ||H||_W
\end{align*}
for the same constant $C'$ as above by the same argument.  Thus, 
\begin{equation*}
\left| \sum_{\ell(\gamma) <= R}  c_{\gamma} H(\ell(\gamma)) \right| \leq D' ||H||_W,
\end{equation*}
where $D' = C' \cdot \sum_{\ell(\gamma) <= R}  |c_{\gamma}|.$
\end{itemize}

If $H$ is supported on $[-R,R],$ then for every $\epsilon > 0,$ there is a sequence $H_m$ of smooth functions supported on $[-(R+\epsilon), R + \epsilon ]$ converging to $H$ in the $\mathcal{S}$-topology\footnote{The sequence of functions we take: approximate $H$ by $K \ast H$ for some smooth approximation $K$ to $\delta_0,$ which we can take to be supported on $[-\epsilon,\epsilon].$  All of these functions are supported on $[-(R + \epsilon), R + \epsilon],$ and they converge to $H$ as our approximation to $\delta_0$ improves.}.  Take such a sequence $H_m$ with $\epsilon$ chosen so that there are no closed geodesics of length in $(R,R+\epsilon).$  It follows that the trace formula is valid for $H$ by taking the limit of both sides of the trace formula applied to $H_m.$  
\end{proof}

\begin{remark}
It is natural to attempt a limiting argument for test functions more general than those from the statement of Theorem \ref{geometrictraceformulallimitnew}. The main difficulty is controlling the geometric side of the trace formula (\ref{traceformulafunctionals}), which has asymptotic to  $e^{2R}/2R$ summands below the length threshold of $R$ \cite{Sarnak}.  
Though we will not pursue it presently, we expect that the trace formula holds true for every test function $H$ satisfying 
$$|| H ||_{\mathcal{S}} + \int_0^\infty |H(x)| e^x dx < \infty.$$ 
Notably, this class includes Gaussian test functions.
\end{remark}

\end{appendix}

\vspace{0.5cm}

\end{document}